\definecolor{lanse}{RGB}{0,0,255} 
\definecolor{zise}{RGB}{112,48,160} 
\definecolor{hongse}{RGB}{200,0,0} 
\renewenvironment{proof}[1][\proofname]{%
	\par\pushQED{\qed}\normalfont%
	\topsep6\p@\@plus6\p@\relax
	\trivlist\item[\hskip\labelsep\bfseries#1\@addpunct{.}]%
	\ignorespaces
}{%
	\popQED\endtrivlist\@endpefalse
}
\theoremstyle{thmstyleone}%
\newtheorem{theorem}{Theorem}%  meant for continuous numbers
\newtheorem{proposition}[theorem]{Proposition}% 
\theoremstyle{thmstyletwo}%
\newtheorem{remark}{Remark}%
\theoremstyle{thmstylethree}%
\newtheorem{definition}{Definition}%
\theoremstyle{thmstylefour}%
\newtheorem{lemma}{Lemma}
\theoremstyle{thmstylefive}%
\newtheorem{assumption}{Assumption}
\theoremstyle{thmstylesix}%
\newtheorem{conjecture}{Conjecture}
\def\keywords{\xdef\@thefnmark{}\@footnotetext}
\renewcommand{\d}{\mathrm{d}}
\newcommand{\dcon}{\Longrightarrow}
\newcommand{\Q}{\mathbb{Q}}
\newcommand{\R}{\mathbb{R}}
\newcommand{\F}{\mathbb{F}}
\newcommand{\N}{\mathbb{N}}
\newcommand{\E}{\mathbb{E}}
\newcommand{\ND}{\mathcal{N}}
\newcommand{\T}{\mathcal{T}}
\renewcommand{\P}{\mathbb{P}}
\newcommand{\ind}[1]{\mathbbm{1}\left\{ #1\right\}}
\newcommand{\abs}[1]{\lfloor #1 \rfloor}
\begin{document}

	\keywords{\today}%
	\keywords{{\bf AMS 2020  subject classification:}  60J80, 60J15 }%
	\keywords{ {\bf Key words and phrases:} Branching random walk, Random environment,  Maximal displacement, Large deviation principle, Reduced processes}%
	\keywords{ This work was supported in part by  NSFC (No.~11971062), and the National Key Research and Development Program of China (No.~2020YFA0712900). }

	\author{
		Wenxin Fu$^*$ \quad Wenming Hong$^\dagger$ 
	}
	\title{On the maximal displacement of critical branching random walk in random environment}

	\date{{\small  {\it  
				$^{*}$$^{\dagger}$School of Mathematical Sciences \& Laboratory of Mathematics and Complex Systems, \\Beijing, China\\
				$^{*}$E-mail:  {\tt 202431130073@mail.bnu.edu.com} \\
				$^\dagger$E-mail:  {\tt wmhong@bnu.edu.com}
			}
		}
	}
	
	\maketitle
	\begin{abstract}
		In this article, we study the maximal displacement of critical branching random walk in random environment. Let $M_n$ be the maximal displacement of a particle in generation $n$, and $Z_n$ be the total population in generation $n$, $M$ be the rightmost point ever reached by the branching random walk. 
		Under some reasonable conditions,  we prove a conditional limit theorem,
		\begin{equation*}
			\mathcal{L}\left( \dfrac{M_n}{\sqrt{\sigma} n^{\frac{3}{4}}} |Z_n>0\right) \dcon \mathcal{L}\left(A_\Lambda\right),
		\end{equation*}
		where
		random variable $A_\Lambda$ is related to the standard Brownian meander. And there exist some positive constant $C_1$ and $C_2$, such that
		\begin{equation*}
			C_1\leqslant\liminf\limits_{x\rightarrow\infty}x^{\frac{2}{3}}\P(M>x) \leqslant 
			\limsup\limits_{x\rightarrow\infty} x^{\frac{2}{3}}\P(M>x) \leqslant C_2.
		\end{equation*}
		Compared with the constant environment case (Lalley and Shao (2015)), 
		it revaels that, the conditional limit speed for $M_n$ in random environment (i.e., $n^{\frac{3}{4}}$) is significantly greater than that of constant environment case (i.e., $n^{\frac{1}{2}}$), and so is the tail probability for the $M$ (i.e., $x^{-\frac{2}{3}}$ vs $x^{-2}$).
		Our method is based on the path large deviation for the reduced critical branching random walk in random environment. 
	\end{abstract}
	
	\section{Introduction}\label{sec1}

	Spatial branching systems have been extensively investigated over past
	decades, in which the maxima of the $n$-th generation of the branching random walk is one of the keynotes.
	%For time homogeneous branching random walk,we always distinct among three subclasses according to the mean number of offspring, which we denote by $m$.
	For supercritical branching random walk ($m > 1$),
	%the work on 
	the law of large numbers for the maxima of branching random walk in generation $n$, $M_n$,  can trace back to Hammersley \cite{Hammersley},
	Kingman \cite{Kingman}, Biggins \cite{Biggins1976} and Bramson \cite{Bramson}.
	%Besides, the position of rightmost particle
	% at a specific generation for supercritical branching random walk was
	Since then extensively studied on this topics have appeared in recent years, 
	see for example
	\cite{Addario2009,Aidekon2013,Bachmann,Bramson2016,Bramson2009,Hu2009} 
	and references therein.
	In particular A\"{\i}d\'{e}kon proved in \cite{Aidekon2013} 
	that  the distribution of the  centered maximal  
	converges in law to a random shift of the Gumbel distribution
	(see also \cite{Bramson2016}).
	
	For critical cases, the system will die out eventually, 
	the maximal displacement of the system is finite almost surely, and it is natural to consider the tail probability of the maximal displacement.
	The asymptotic law for the maxima of a critical branching Brownian
	motion trace back to Sawyer and Fleischman \cite{Fleischman1979Maximun}
	and Lalley and Sellke \cite{Lalley1987Condition}.
	While for critical  branching random walk, results appeared in recent years.
	The case when the offspring distribution is critical, 
	that is $m = 1$, was considered by Kesten \cite{Kesten1995}  
	and Lalley and Shao in \cite{Lalley2015}.
	Let $M_n$ be the maximal displacement in generation $n$, and $Z_n$ be the total population in generation $n$. 
	By introducing the discrete Feynman-Kac formula,
	Lalley and Shao proved in \cite{Lalley2015},
	under some moment assumptions (Theorem 3 in  \cite{Lalley2015}),
	\begin{equation}\label{cl1}
		\mathcal{L}\left( \dfrac{M_n}{ n^{\frac{1}{2}}} |Z_n>0\right) \dcon G, \quad \text{ as\quad  $n\to \infty$,}
	\end{equation}
	where $G$ is a  nontrivial  distribution  that depends only on the variances  of the offspring and step distributions. For 
	$M$, the rightmost point ever reached by the branching random walk,  (Theorem 1 in \cite{Lalley2015}),
	\begin{equation}\label{pt1}
		P(M\geqslant x) \sim \dfrac{\alpha}{x^2},\quad\text{as}\quad x\rightarrow\infty.
	\end{equation}
	Here $\alpha$ is a constant
	which depends on the standard deviations of
	the jump and offspring distribution.

	\
	
	Consider a branching random walk in random
	environment (BRWre), i.e., a branching random walk with the time-inhomogeneous environment, which has been introduced by Biggins and Kyprianou in \cite{Biggins2004}. For the maximal displacement in generation $n$, $M_n$, of 
	the supercritical branching random walk in random environment, Huang and
	Liu \cite{Huang2014} proved that the maximal displacement in the process grows at ballistic speed almost surely.
	Mallein and Mi\l o\'s investigated the second order behavior  in \cite{Mallein2019}, i.e.,
	\begin{equation}\label{stp}
		\lim\limits_{n\rightarrow\infty}\dfrac{M_n-\frac{K_n}{\theta^*}}{\log n} = -\varphi
	\end{equation}
	in  probability with respect to the annealed law
	(i.e., averaging over the branching random walk and point process laws).
	Here, $\theta^*$and $\varphi$ 
	are deterministic constants for which descriptions are given in \cite{Mallein2019}, and $K_n$ is an environment-measurable random walk. The random environment make effects on both  the speed of $M_n$, 
	that is the limit (in probability) of $M_n$, 
	is strictly greater than that seen in the time-homogeneous case and the logarithmic correction 
	is also strictly greater than in the time-homogeneous case (see \cite{Mallein2019}).

	In this article, we focus on the critical branching random walk in random environment.
	Different from the method `` discrete Feynman-Kac formula" by Lalley and Shao   in \cite{Lalley2015}, we apply the path large deviation method to prove (\Cref{Conditional limit theorem}), for $M_n$, conditional on survival events (i.e. $\{Z_n>0\}$),  the annealed law of the maximal displacement in generation $n$ convergence weakly to some non-degenerated random variable $A_\Lambda$, i.e.
	\begin{equation}
		\mathcal{L}\left( \dfrac{M_n}{\sqrt{\sigma} n^{\frac{3}{4}}} |Z_n>0\right) \dcon \mathcal{L}\left(A_\Lambda\right), \quad \text{as} \quad n\rightarrow\infty.
	\end{equation}
	For 
	$M$, the rightmost position ever reached by the branching random walk, we prove that (\Cref{Tail prob of M}),
	\begin{equation}\label{pt2}
		\P(M\geqslant x) \asymp \dfrac{\alpha}{x^{2/3}},\quad\text{as}\quad x\rightarrow\infty.
	\end{equation}
	under the annealed probability $\P$.
	
	\

	To formulate  precisely,
	let $\mathcal{P}\left(\N_0\right)$ 
	be the space of probability measures on 
	$N_0:=\left\{0,1,\cdots\right\}$.
	Equipped with the metric of total variation,
	$\mathcal{P}\left(\N_0\right)$ becomes a Polish space. 
	For any probability measure $F\in \mathcal{P}\left(\N_0\right)$, we also use $F$ to denote the generating function of this probability measure on $\N$, the mean value and normalized second factorial moment of $F$ is denoted as
	\begin{equation}\label{mean and stand-var}
		\overline{F}:= \sum_{k=0}^{\infty} k F[k],
		\quad\tilde{F}:=\dfrac{1}{\overline{F}^2} \sum_{k=1}^{\infty}k(k-1)F[k].
	\end{equation}
	Also, denote by $ \varkappa(F,a) $ 
	the standardized truncated second moment of 
	the probability measure $F\in \mathcal{P}\left(\N_0\right)$,
	\begin{equation*}
		\varkappa(F,a):= \dfrac{1}{\overline{F}^2} \sum_{y=a}^{\infty} y^2F[y].
	\end{equation*}
	
	Let $F$ be a random variable 
	taking values in $\mathcal{P}\left(N_0\right)$.
	Then an infinite sequence $\xi:= \left\{F_1,F_2,\cdots\right\} $
	of i.i.d. copies of $F$ is said to form a random environment, 
	we use $P$ denote the corresponding probability of environment.

	Given an environment $\xi:= \{F_1,F_2,\cdots\}$, 
	the time-inhomogeneous branching random walk 
	in environment $\xi$ is a process constructed as follows:
	\begin{itemize}
		\item It starts with one individual located at the origin in time $0$.
		\item At time $n$ ($n\geqslant1$), each individual alive at time $n-1$ dies and reproduces several children according to the probability measure $F_n$.
		\item Every new child 
		moves independently according to some jump distribution $\mu$
		respect to its parent (and independent of all other factors).
	\end{itemize}
	
	We denote by $\T $ the (random) genealogical tree of the process. 
	For a given individual $u\in \T$, we use $V(u)\in \R$ for the position of $u$ and $|u|$ for the generation where $u$ is alive.
	The pair $\left(\T,V\right)$ is called the branching random walk 
	in the time-inhomogeneous environment $\xi$.
	For each $n\geqslant0$, define $Z_n:= \# \{u\in \T,|u|=n\}$ the number of particles survived in generation $n$.
	
	Given the environment $\xi$, 
	use $P_\xi$ denote the quenched probability of 
	the branching random walk $\left(\T,V\right)$.
	And use $\P=P\otimes P_{\xi}$ 
	denote the annealed probability of 
	the branching random walk in random environment.
	Define
	\begin{equation}\label{quenched random walk}
		X_k:=\log \overline{F}_k,\quad \eta_k:=\tilde{F}_k,
		\quad S_0=0,\quad \text{and} \quad S_n=S_{n-1}+X_n.
	\end{equation}
	Under $\P$, $\{X_k;k\geqslant1\}$ is 
	a sequence of $i.i.d$ copies of logarithmic of the mean of the offspring number $X:=\log\overline{F} $,
	and the sequence $ S:=\left(S_0,S_1,\cdots\right) $ 
	is called the associated random walk.
	
	\
	
	\begin{assumption}\label{A1}
		Assume,
		
		(1) For the offspring
		\begin{equation*}
			\E[X]= 0, \quad\sigma^2:=\E[X^2]\in (0,\infty).
		\end{equation*}
		and there exist positive integer $a$ and some positive number $\delta>0$, such that
		\begin{equation*}
			\E\left[ \left(\log^+ \varkappa(F,a)  \right)^{2+\delta} \right] <\infty.
		\end{equation*}
		Here $\log_+x:= \log\left(\max\{x,1\}\right)$. 
		
		(2) The jump distribution $\mu$ is standard normal distribution $\ND(0,1)$.
	\end{assumption}
	
	\
	
	\begin{remark}
		
		(1)	Under the assumption, the branching processes in random environment will extinct eventually, actually, there exist some positive and finite constant $K$, such that
		\begin{equation}\label{Extinction Prob Decay}
			\lim\limits_{n\rightarrow\infty}\sqrt{n}\P\left(Z_n>0\right)=K.
		\end{equation}
		This was first proved by Kozlov	(\cite{Kozlov1976}) for  linear fractional 
		critical branching processes in $i.i.d.$ environments, and then, Afanasyev  et.al (\cite{Afanasyev2005})  extended  to more general cases as for \Cref{A1}.
		
		(2)For simplicity, the jump distribution we consider is the standard normal distribution $\ND(0,1)$. In fact, it is reasonable for any distribution that satisfies the Cram$\acute{e}$r's condition such that we can deal with the path large deviation for the branching random walk. This is the usually condition for the supercritical branching random walk. Actually, conditioned on non-extinction, we will consider the reduced critical branching random walk in random environment which behaves more like time-inhomogeneous supercritical case.
		
		%(i.e. the Laplace transform of the distribution is finite for some ball of $0$). 
		%For more general distribution without Cram$\acute{e}$r's condition, our method may don't work.
	\end{remark}

	According to \cref{Extinction Prob Decay}, the branching system will die out eventually, that means the maximal displacement of the branching system is finite almost surely.
	In this article, we will investigate the maximal displacement in generation $n$ condition on survival events (i.e. $\{Z_n>0\}$),
	and the whole maximal displacement of this system $\left(\T,X\right)$, i.e.
	\begin{equation*}
		M_n:= \max\{X(u):u\in\T,|u|=n\}, \quad M:= \sup M_n.
	\end{equation*}
	
	Conditioned on survival events $ \{Z_n>0\} $, 
	we will consider the maximal displacement in generation $n$ (i.e. $M_n$) firstly, and obtain the following  conditional limit theorem.
	
	\
	
	\begin{theorem}\label{Conditional limit theorem}
		Under the \Cref{A1}, 
		condition on the survival events $\left\{Z_n>0\right\}$,
		$\dfrac{M_n}{\sqrt{\sigma}n^{\frac{3}{4}}}$ convergence in law to some non-degenerated random variable $A_\Lambda$:
		\begin{equation}\label{cl2}
			\mathcal{L}\left( \dfrac{M_n}{\sqrt{\sigma} n^{\frac{3}{4}}} |Z_n>0\right) \dcon \mathcal{L}(A_\Lambda),\quad \text{as} \quad n\rightarrow\infty.
		\end{equation}
		Here $ A_\Lambda:=\sup\{g(1): \forall r\in[0,1],\int_{0}^{r} \frac{1}{2}g^{\prime}(s)^2 \d s\leqslant \Lambda_r,g\in C_0([0,1])\} $, where $ \Lambda_t:=\inf\{W_s^+;s\in[t,1]\} $ is the minimal process of a standard Brownian meander $\{W_t^+;t\in [0,1]\}$ and $C_0([0,1])$ is the set of continuous functions on interval $[0,1]$ with value $0$ at $0$.
	\end{theorem}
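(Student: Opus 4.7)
The plan is a path-large-deviation argument, combining the meander limit for the associated random walk under the survival conditioning with a description of the reduced tree size. Concretely, I would show that the rescaled path $s\mapsto g(s)\sqrt{\sigma}\,n^{3/4}$ can be tracked by some descendant in the reduced tree up to generation $n$ if and only if $\tfrac12\int_0^r g'(s)^2\,ds \leq \Lambda_r$ for every $r\in[0,1]$. The limit of $M_n/(\sqrt{\sigma}\,n^{3/4})$ is then identified with the supremum $A_\Lambda$ of $g(1)$ over this admissible class.

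Two process-level inputs drive the argument. First, under $\P(\cdot\mid Z_n>0)$, the functional invariance principle for critical BPRE (in the spirit of Afanasyev et al., which also underlies \eqref{Extinction Prob Decay}) gives
\[
\Bigl(\tfrac{S_{\lfloor tn\rfloor}}{\sigma\sqrt{n}}\Bigr)_{t\in[0,1]} \;\dcon\; (W^+_t)_{t\in[0,1]}.
\]
Second, jointly with this, the reduced tree size $Z^{(n)}_k:=\#\{u\in\mathcal{T}:|u|=k,\; u \text{ has a descendant at generation } n\}$ satisfies
\[
\tfrac{1}{\sqrt{n}}\log Z^{(n)}_{\lfloor tn\rfloor}\;\dcon\; \sigma\,\Lambda_t,
\]
with $\Lambda_t=\inf_{s\in[t,1]}W^+_s$ appearing because the bottleneck controlling survival from time $tn$ to $n$ sits at the infimum of the future environment on $[t,1]$: the quenched survival probability of a line starting at time $tn$ is of order $\exp(\min_{[tn,n]}(S_j-S_{tn}))$ up to polynomial corrections.

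Next I would implement the Gaussian path large-deviation count along an ancestral line. Since the displacements are i.i.d.\ $\mathcal{N}(0,1)$, Schilder's theorem at speed $\sqrt{n}$ applied to the rescaled target $S^W_{\lfloor tn\rfloor}/(\sqrt{\sigma}\,n^{3/4})\approx g(t)$ gives
\[
\P\Bigl(\sup_{t\in[0,1]}\bigl|S^W_{\lfloor tn\rfloor}-g(t)\sqrt{\sigma}\,n^{3/4}\bigr|\leq \varepsilon\,n^{3/4}\Bigr)=\exp\Bigl(-\sqrt{n}\bigl(\tfrac{\sigma}{2}\int_0^1 g'(s)^2\,ds+o(1)\bigr)\Bigr).
\]
Multiplying by $Z^{(n)}_{\lfloor tn\rfloor}\approx \exp(\sqrt{n}\,\sigma\,\Lambda_t)$, the expected number of descendants in the reduced tree that track $g(\cdot)\sqrt{\sigma}\,n^{3/4}$ through time $tn$ is of order $\exp\bigl(\sqrt{n}\,\sigma\bigl(\Lambda_t-\tfrac12\int_0^t (g')^2\bigr)\bigr)$. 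Hence $g$ stays non-negligible on all of $[0,1]$ iff $\tfrac12\int_0^r (g')^2\leq\Lambda_r$ for every $r$, recovering exactly the admissible class defining $A_\Lambda$.

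The matching upper and lower bounds on $M_n$ are then standard. For the upper bound, a first-moment/union bound over a fine mesh of candidate terminal positions $y>A_\Lambda+\varepsilon$ combined with the above expected-count estimate shows $\P(M_n>(A_\Lambda+\varepsilon)\sqrt{\sigma}\,n^{3/4}\mid Z_n>0)\to 0$. For the lower bound, one picks a smooth $g_\varepsilon\in C_0([0,1])$ with $g_\varepsilon(1)\geq A_\Lambda-\varepsilon$ and $\tfrac12\int_0^r (g_\varepsilon')^2<\Lambda_r$ strictly on $[0,1]$ and runs a second-moment argument on the number of particles tracking an $\varepsilon$-tube around $g_\varepsilon(\cdot)\sqrt{\sigma}\,n^{3/4}$; the strict slack guarantees a uniformly positive probability of such a tracker, and continuity of the variational map $W^+\mapsto A_\Lambda$ transfers environment-level convergence into convergence in law of $M_n/(\sqrt{\sigma}\,n^{3/4})$. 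The main obstacle will be the uniform process-level approximation $\log Z^{(n)}_{\lfloor tn\rfloor}\approx \sqrt{n}\,\sigma\,\Lambda_t$ and the accompanying concentration of the reduced tree around its environment-driven mean, which requires controlling the martingale fluctuations of the reduced BPRE; the $(2+\delta)$-moment hypothesis on $\log_+\varkappa(F,a)$ in \Cref{A1} is precisely what is needed for this, and propagating this control uniformly in $t$ and over a compact family of test paths $g$ so that Schilder's lemma can be applied pathwise is the technically heaviest part.
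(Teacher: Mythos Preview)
Your proposal is essentially correct and follows the same overall strategy as the paper: work with the reduced tree, use that $\tfrac{1}{\sigma\sqrt{n}}\log Z^{(n)}_{\lfloor tn\rfloor}\Rightarrow\Lambda_t$ jointly with the meander limit for the environment, and combine this with a Schilder-type path large deviation for the Gaussian displacements at speed $\sqrt{n}$ to identify the admissible paths as exactly those with $\tfrac12\int_0^r(g')^2\leq\Lambda_r$. The identification of $A_\Lambda$ and the first-moment upper bound match the paper's argument (many-to-one plus the path LDP).

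There are, however, two implementation differences worth flagging. First, for the lower bound the paper does not run a second-moment computation; it uses a change of measure via the additive martingale (spine decomposition), controls the $\alpha$-moment of the martingale under the tilted law, and then combines this with the branching growth provided by an analogue of your $\log Z^{(n)}_{\lfloor nh\rfloor}\approx\sigma\sqrt{n}\,\Lambda_h$ at a small positive time $h$. This bypasses the need to bound the variance of the tracking count directly and is where the hypothesis $\E[(\log_+\varkappa(F,a))^{2+\delta}]<\infty$ enters, via \eqref{Techanical assumption}. Your second-moment route is a legitimate alternative but would require a separate control of the pair correlations in the reduced tree. Second, you appeal to ``continuity of the variational map $W^+\mapsto A_\Lambda$'' to pass from environment convergence to convergence in law of $M_n$; the paper instead sets up a deterministic time-inhomogeneous framework (Assumptions~\ref{A2} and~\ref{A5}, \Cref{Max Dis of TIBRW}), uses Skorokhod representation to realize the environment and reduced-tree limits almost surely, and then applies the deterministic result quenched. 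This avoids having to prove continuity of $\Lambda\mapsto A_\Lambda$, which is not obvious in the uniform topology, and also cleanly handles the boundary issue at $r=0$ (where $\Lambda_0=0$ forces equality, so your ``strict slack on all of $[0,1]$'' cannot hold; the paper's $g_h$ construction and \Cref{Q and phi} are precisely designed to deal with this).
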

	
	\
	
	For the whole maximal displacement (i.e. $M$) of 
	critical branching random walk in random environment, we prove the asymptotic order of the  tail   probability.
	\begin{theorem}\label{Tail prob of M}
		Under the \Cref{A1}, there exist some finite positive constant $C_1$ and $C_2$, such that
		\begin{equation}\label{tail estimate}
			C_1\leqslant\liminf\limits_{x\rightarrow\infty}x^{\frac{2}{3}}\P(M>x) 
			\leqslant 
			\limsup\limits_{x\rightarrow\infty} x^{\frac{2}{3}}\P(M>x) \leqslant C_2
		\end{equation}
	\end{theorem}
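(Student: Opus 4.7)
The strategy is to relate $\P(M>x)$ to $\P(M_n>x)$ at the natural scale $n\asymp x^{4/3}$, at which $x$ and $\sqrt{\sigma}\,n^{3/4}$ are comparable (this is the scale identified by \Cref{Conditional limit theorem}); combined with the decay $\P(Z_n>0)\sim K/\sqrt{n}$ from \eqref{Extinction Prob Decay}, this produces the target rate $x^{-2/3}$.

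\textbf{Lower bound.} Fix $t>0$ and set $n=n(x):=\lfloor(x/(\sqrt{\sigma}\,t))^{4/3}\rfloor$, so that $x/(\sqrt{\sigma}\,n^{3/4})\to t$. Since $M\geqslant M_n$, we have $\P(M>x)\geqslant \P(Z_n>0)\,\P(M_n>x\mid Z_n>0)$. Multiplying by $x^{2/3}$ and invoking \eqref{Extinction Prob Decay} and \Cref{Conditional limit theorem} yields $\liminf_{x\to\infty} x^{2/3}\P(M>x)\geqslant K(\sqrt{\sigma}\,t)^{2/3}\,\P(A_\Lambda>t)$. Since $A_\Lambda$ is non-degenerate (being a supremum over paths controlled by the a.s.\ positive Brownian meander), $t$ can be chosen so that $\P(A_\Lambda>t)>0$, producing a positive $C_1$.

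\textbf{Upper bound.} Fix a large $A>0$ and set $N=N(x):=\lceil A x^{4/3}\rceil$. Decompose $\P(M>x)\leqslant \P(Z_N>0)+\P(M>x,\,Z_N=0)$. The first term is at most $C A^{-1/2} x^{-2/3}$ by \eqref{Extinction Prob Decay}. On $\{Z_N=0\}$ the tree is extinct by generation $N$, so $M=\max_{k\leqslant N} M_k$ and the second term is bounded by $\P(\exists\,k\leqslant N,\, M_k>x)$. I would bound this last probability by $C x^{-2/3}$ via the path large deviation principle for the reduced BRWre announced in the paper: under $\P(\cdot\mid Z_N>0)$, the rescaled trajectory $(M_{\lfloor tN\rfloor}/N^{3/4})_{t\in[0,1]}$ has a Brownian-meander–dominated limit, and the event of this trajectory ever exceeding the $O(1)$ level $x/(\sqrt{\sigma}\,N^{3/4})=A^{-3/4}/\sqrt{\sigma}$ by time $N$ has probability comparable, up to a constant depending only on $A$, to $\P(Z_N>0)=O(x^{-2/3})$.

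\textbf{Main obstacle.} The sharp difficulty is the bound on $\P(\exists\,k\leqslant N,\,M_k>x)$. A naive union bound $\sum_{k\leqslant N}\P(M_k>x)=\sum_k\P(Z_k>0)\,\P(M_k>x\mid Z_k>0)$ cannot succeed, since $\sum_k k^{-1/2}$ diverges and \Cref{Conditional limit theorem} is only a qualitative statement that gives no quantitative tail information scale-by-scale. The remedy is the path LDP for the reduced critical BRWre: it compares the running supremum $\max_{k\leqslant N}M_k$ with the terminal value $M_N$ and so concentrates the dominant contribution on generations $k\asymp x^{4/3}$. Combined with Cram\'er-type first-passage control for the ancestral random walk (using the standard-Gaussian assumption on $\mu$) and the fluctuation asymptotic \eqref{Extinction Prob Decay} for critical BPRE, this is what controls the extra constant factors and yields the matching upper constant $C_2$.
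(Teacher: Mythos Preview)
Your lower bound is correct and essentially identical to the paper's: choose $n\asymp x^{4/3}$, use $M\geqslant M_n$, combine \eqref{Extinction Prob Decay} with \Cref{Conditional limit theorem}, and note $\P(A_\Lambda>t)>0$ for some $t$.

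Your upper bound, however, has a genuine gap. After the split $\P(M>x)\leqslant \P(Z_N>0)+\P(M>x,\,Z_N=0)$ you propose to control the second term by invoking ``the path LDP for the reduced BRWre under $\P(\cdot\mid Z_N>0)$''. This is circular: on the event $\{Z_N=0\}$ there is no reduced tree at horizon $N$, and any statement conditioned on $\{Z_N>0\}$ is irrelevant there. If instead you drop the restriction and try to bound $\P(\exists\,k\leqslant N,\,M_k>x)$ using $\P(\cdot\mid Z_N>0)$, you only recover the piece $\P(\exists\,k\leqslant N,\,M_k>x,\,Z_N>0)\leqslant\P(Z_N>0)$, which has already been counted; the remaining piece is exactly the term you were trying to bound. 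Moreover, nothing in the paper establishes a limit for the \emph{running} maximum $(M_{\lfloor tN\rfloor}/N^{3/4})_{t\in[0,1]}$: \Cref{Conditional limit theorem} concerns only the terminal value $M_n$, and the path deviation principle (\Cref{Moderate Deviation}) is a quenched statement about paths of individual particles in a fixed time-inhomogeneous system, not about $t\mapsto M_{\lfloor tN\rfloor}$ under the annealed conditional law.

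The paper's route to the upper bound is quite different and does not pass through \Cref{Conditional limit theorem} at all. It splits at a \emph{small} multiple $\epsilon n$ (with $x=n^{3/4}$), not a large one, and then controls the \emph{population size} rather than $M_k$ directly. Conditionally on $\{Z_{\epsilon n}>0\}$ it uses Afanasyev's trajectory result for $\bigl(\log(Z_{tT}+1)/(\sigma\sqrt{n})\bigr)_{t\in[0,1]}$ given $T>n$ to show that, except on an event of conditional probability $O(\sqrt{\epsilon})$, one has $T\leqslant n/4$ and $\sup_k Z_k\leqslant e^{\sigma\sqrt{n}}$; on that good event a many-to-one bound plus the Gaussian first-passage estimate $\P(\max_{k\leqslant n/4}B_k>n^{3/4})\leqslant e^{-2\sqrt{n}}$ kills the displacement term. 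On $\{Z_{\epsilon n}=0\}$ the tree dies by time $\epsilon n$, and again a population cutoff at $e^{\sigma\sqrt{n}}$ combined with the first-passage bound over only $\epsilon n$ steps, namely $\P(\max_{k\leqslant\epsilon n}B_k>n^{3/4})\leqslant e^{-\sqrt{n}/(2\epsilon)}$, gives super-polynomial decay; the complementary event $\{\sup_{k\leqslant\epsilon n}Z_k>e^{\sigma\sqrt{n}},\,Z_{\epsilon n}=0\}$ is handled by a stopping-time argument and the tail asymptotic $\sqrt{n}\,\P(\sup_k Z_k>e^{\sigma\sqrt{n}})\to K_2$. The key missing ingredient in your proposal is precisely this population-size control: you need to bound how many random walks are simultaneously alive, and for how long, before you can apply any first-passage estimate.
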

	
	\
	
	\begin{conjecture}\label{Conj1}
		The constant $C_1$ and $C_2$ in \Cref{Tail prob of M} are equal.
	\end{conjecture}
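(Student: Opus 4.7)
The plan is to upgrade the two-sided bound of \Cref{Tail prob of M} to a matching asymptotic by identifying a common scaling limit for the event $\{M>x\}$. Because $M_n/(\sqrt{\sigma}n^{3/4})$ has a nondegenerate conditional limit, any trajectory that ever crosses level $x$ must first do so near generation $n \asymp x^{4/3}$. Letting $\tau_x := \inf\{n: M_n > x\}$, I would write
\begin{equation*}
\P(M>x) \;=\; \sum_{n\ge 1}\P(\tau_x=n),
\end{equation*}
and after the change of variables $n = \lfloor x^{4/3} t\rfloor$ try to prove
\begin{equation*}
x^{2/3}\,\P\bigl(\tau_x \in x^{4/3}\d t\bigr) \;\Rightarrow\; g(t)\,\d t
\end{equation*}
for an explicit density $g$ expressible through the Brownian meander $W^+$ and the variational functional defining $A_\Lambda$. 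Integrating then yields $\lim_{x\to\infty} x^{2/3}\P(M>x) = \int_0^\infty g(t)\,\d t$, which forces $C_1 = C_2$.

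The first step is to promote \Cref{Conditional limit theorem} from a one-dimensional marginal to a joint functional convergence: under $\P(\,\cdot\,|Z_n>0)$, the rescaled trajectory of the reduced critical BRWre together with its running maximum converges in the Skorokhod topology to the meander-driven variational process. Paired with a Berry--Esseen-type refinement of \cref{Extinction Prob Decay}, this joint convergence controls both the crossing time and the overshoot at $\tau_x$. The second step is then to deduce the density statement above from the functional limit.

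The hard part will be converting this weak convergence into a first-passage \emph{density}. The map $W^+ \mapsto A_\Lambda$ is defined via a supremum over an admissible class of paths, and the first-passage functional $t \mapsto$ (time at which the optimizer reaches level $1$) is not automatically continuous at every realization of $W^+$. One needs a uniqueness and regularity property for the optimizer in the variational characterization of $A_\Lambda$ --- equivalently, strict convexity of the effective rate function around the extremal path --- so that, $\mathbf{P}$-a.s.\ in $W^+$, the crossing time of level $1$ has a well-defined law without atoms. Establishing this variational uniqueness is the central obstacle and is presumably the reason the authors recorded \Cref{Conj1} rather than upgrading \Cref{Tail prob of M} directly.

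A complementary route that avoids computing $g$ explicitly is a renewal/self-similarity argument. Picking an intermediate level $y = y(x)$ with $1 \ll y \ll x$, partition $\{M>x\}$ by the first particle crossing $y$ and use the strong branching property: the descendants of that particle form an independent BRWre shifted by $y$. This suggests an approximate fixed-point relation
\begin{equation*}
x^{2/3}\P(M>x) \;=\; x^{2/3}\P\bigl(M>x,\;\tau_y<\infty\bigr) + \varepsilon(x,y),
\end{equation*}
for the function $\phi(x):=x^{2/3}\P(M>x)$, whose bounded solutions would be constant. The catch is that the exponent $2/3$ is not native to the i.i.d.\ structure of $\xi$ --- it emerges only after averaging the quenched offspring means via \Cref{A1} --- so the error $\varepsilon(x,y)$ is a priori of the same order as the main term, and controlling it rigorously again requires the very meander-level uniform convergence that the first approach demands. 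Either way, the missing ingredient is the same piece of sharp regularity for the variational limit.
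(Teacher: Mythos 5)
There is nothing in the paper to compare against here: \Cref{Conj1} is stated as an open conjecture, and the paper proves only the two-sided bound of \Cref{Tail prob of M}, with $C_1=K\,\P(\sqrt{\sigma}A_\Lambda>1)$ and $C_2=K\left(\sigma\sqrt{\pi/2}+2\right)$ arising from entirely different mechanisms (a conditional-limit lower bound versus a crude union bound over particles plus an $\epsilon\to0$ truncation). Your text is accordingly a research programme rather than a proof, and you say so yourself: both of your routes terminate at an ingredient you do not supply. Concretely, the first route needs (i) a joint functional limit for the rescaled reduced BRWre and its running maximum, (ii) uniqueness and regularity of the optimizer in the variational problem defining $A_\Lambda$ so that the level-$1$ crossing time of the optimal path is a continuous, atom-free functional of $W^+$, and (iii) uniform integrability over crossing-time scales $t\to0$ and $t\to\infty$ (the analogue of the paper's decomposition on $\{Z_{\epsilon n}>0\}$ and the $\epsilon\to0$ step) so that the claimed density $g$ can actually be integrated to give $\lim_{x\to\infty}x^{2/3}\P(M>x)$. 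None of (i)--(iii) is established, and without them the existence of the limit --- which is the entire content of \Cref{Conj1} --- is not obtained.

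The alternative renewal route has, in addition, a structural flaw. Decomposing $\{M>x\}$ at the first particle to cross a spatial level $y$ does not yield an independent BRWre shifted by $y$: the environment is indexed by time and shared by all particles, so conditioning on the (environment-dependent) crossing event biases the law of the post-crossing environment, and, more importantly, at the first crossing the population is typically of size $e^{c\sqrt{n}}$ with many particles near level $y$, so $\P(M>x)$ is not governed by a one-particle regeneration and the fixed-point relation for $\phi(x)=x^{2/3}\P(M>x)$ is not even approximately of the claimed form. The assertion that bounded solutions of such a relation ``would be constant'' is also unjustified as stated (an approximate identity $\phi(x)\approx\phi(x)+\varepsilon(x,y)$ carries no rigidity unless $\varepsilon$ is shown to be $o(1)$ uniformly, which is exactly the sharp meander-level control you have not proved). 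So the proposal correctly identifies where the difficulty lies, but it does not close it; \Cref{Conj1} remains open on the basis of what you have written.
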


	Compared with the behavior of $M_n$ in \cref{cl1} and \cref{cl2}, 
	it reveals that in the critical case, the conditional limit speed for $M_n$ in random environment (i.e., $n^{\frac{3}{4}}$) is significantly greater than that of constant environment ( time-homogeneous) case (i.e., $n^{\frac{1}{2}}$), and so is the tail probability for the $M$ (see \cref{pt1} and \cref{tail estimate}). The reason behind the phenomena is that, by Yaglom theorem for the critical branching processes, conditioned on non-extinction, the number of the particles $Z_n$ in generation $n$ is $Z_n\sim n$ for the constant environment and $Z_n\sim e^{c n^{1/2}}$ for the random environment case. As a consequence, the significantly larger number of living particles (in the random environment) makes it possible to jump to a higher position even though we assume the step jump is light tail (satisfying the Cram$\acute{e}$r's condition condition, see  \Cref{A1}). Actually, conditioned on non-extinction, we need only to consider the reduced critical branching random walk in random environment which behaves more similar to the time-inhomogeneous supercritical case, which enable us to prove the conditional limit by the path large deviation and then get the tail behavior for the $M$, the rightmost point ever reached by the branching random walk. 
	
	\
	
	The rest of the paper is as follows. In \Cref{sec2}, we introduce a kind of time-inhomogeneous branching random walk. And we will establish the sample path deviation principle \Cref{Moderate Deviation} under \Cref{A2} in \Cref{Sec2.1}. Based on the deviation principle, we will investigate the maximal displacement in generation $n$ (i.e. \Cref{Max Dis of TIBRW}) under \Cref{A2} and \Cref{A5}, which will play an important role in proving \Cref{Conditional limit theorem}. 
	
	In \Cref{Sec3}, we consider the reduced critical branching processes in random environment, and proved a stronger conditional limit theorem, which ensures the conditional reduced critical branching random walk in random environment satisfies \Cref{A2} and \Cref{A5} in \Cref{sec2}, thus \Cref{Max Dis of TIBRW} can be applied.
	
	Finally, with the tools established in \Cref{sec2} and \Cref{Sec3} in hand, we will prove \Cref{Conditional limit theorem} in \Cref{Sec4.1} and  prove \Cref{Tail prob of M} in \Cref{Section4.2}.

	\section{Time-inhomogeneous Branching Random Walk}\label{sec2}

	For the time-inhomogeneous branching random walk,
	in \cite{Fang2012}, Fang and Zeitouni study the maximal displacement of branching random walks in a class of time inhomogeneous environments. In the main results, an interesting phenomenon appears about the asymptotic behavior of the maximum displacement: the profile of the variance matters, both to the leading (velocity) term and to the logarithmic correction term, and the latter exhibits a phase transition.
	In \cite{Mallein2015(2)}, Mallein consider more general time-inhomogeneous branching random walk, and obtain similar results that is related to a optimization problem.
	
	In \cite{Fang2012(2)}, Fang and Zeitouni investigate a kind of time-inhomogeneous branching Brownian motion, obtain the leading (velocity) term and specify  the correction term being of order $n^{\frac{1}{3}}$; later in \cite{Mallard2016}, Maillard and Zeitouni refined the results: set $m_t= v(1)t-w(1)n^{\frac{1}{3}} -\sigma(1)\log n $ and some suitable $m_t^*$ satisfies  $\sup|m^*_t-m_t|<\infty$, where $w(1)$ is related to the largest zero of the Airy function of the first kind, the tightness about $M_t-m_t$, the convergence in law about $\{M_t-m^*_t\}$ and the limit distribution was obtained. And about time-inhomogeneous branching random walk, Mallein proved similar results in \cite{Mallein2015}.
	
	\
	
	In present paper, we  deal with the $M_n$, the maximal displacement in generation $n$ of the branching random walk in random environment. It is reasonable to realize that only the particles alive in generation $n$ make contributions to $M_n$. So conditioned on non-extinction, we need only to consider the so called reduced critical branching random walk in random environment, which behaves more like time-inhomogeneous supercritical case. 
	
	To this end, in this section we  will introduce a more general framework of the time-inhomogeneous branching random walk and establish the sample path moderate deviation principle, and then prove the so-called the law of large number of the maximal displacement $M_n$ of time-inhomogeneous branching random walk under some conditions.

	\subsection{Model and Assumptions}\label{Sec2.1}
	
	For each $n$, $\F^{n} := \{F_1^{n},F_2^{n},\cdots,F_n^{n}\}\in \mathcal{P}\left(\N_0\right)^n$ is a $n$ length sequence of branching mechanism.
	Given $ \F^{n} $, we can construct a time-inhomogeneous branching random walk up to time $n$, whose jump distribution is $\mathcal{N}(0,1)$ and independent of the branching, we use $\left(\T^n,V^n\right)$ denote this branching system. $N_k^n$ denote the set of particles alive at time $k$, and $Z_k^n:=|N_k^n|$ be the number of particles lives in generation $k$.  $P_n$ and $E_n$ denote the probability and the expectation of the time-inhomogeneous branching random walk.
	
	\
	
	\begin{assumption}\label{A2}
		For each $n$ and $1\leqslant k\leqslant n$, we assume that $F_k^{n}[0]=0$, then the mean of $F_k^n$ satisfies $\overline{F}^n_k\geqslant 1$.
		Let $S_0^n:=0 $ and $S^{n}_k := \sum\limits_{i=1}^{k} X_i^{n}$ for $1\leqslant k\leqslant n$, where $X_i^{n} := \log \overline{F}_i^{n} \geqslant 0$. Assume there exist a sequence finite positive number $\{a_n\}$ such that $\lim\limits_{n\rightarrow\infty} a_n = \infty$, $\lim\limits_{n\rightarrow\infty} \frac{a_n}{n} = 0$ and
		\begin{equation}\label{Techanical assumption}
			\lim\limits_{n\rightarrow\infty}
			\dfrac{1}{a_n}\log\left(1+\sum_{k=1}^n \tilde{F}^n_k\right)=0. 
		\end{equation}
		Assume $f(t)$ being some continuous non-decreasing function on $[0,1]$, satisfies $f(0)=0$ and $f(t)>0$ for $t>0$.
		For any $t\in [0,1]$,
		\begin{equation*}
			\lim\limits_{n\rightarrow\infty}\dfrac{S^n_{\abs{nt}}}{a_n} = f(t).
		\end{equation*}
	\end{assumption}

	\begin{remark}\label{Remark an}
		The asymptotic relationship satisfied by the sequence $\{a_n\}$ appeared in \Cref{A2} is consistent with the moderate deviation of the random walk with Cram$\acute{e}$r's condition, see Theorem 3.7.1 in \cite{LDP1998}. And for the normal distribution cases, the condition $\lim\limits_{n\rightarrow\infty} \frac{a_n}{n} = 0$ can be omitted.
	\end{remark}
	
	\
	
	\begin{lemma}\label{Remark of Convergence}
		Under \Cref{A2}, 
		$ \left\{\dfrac{S^n_{\abs{n\cdot}}}{a_n};n\geqslant1\right\} $  converges to $f(\cdot)$ uniformly.
	\end{lemma}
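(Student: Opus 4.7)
The plan is to apply the classical P\'olya-type convergence theorem: pointwise convergence of a sequence of monotone functions to a continuous monotone function on a compact interval automatically upgrades to uniform convergence. Here the functions in question are $t\mapsto S^n_{\abs{nt}}/a_n$, which are right-continuous non-decreasing step functions on $[0,1]$ because the increments $X^n_i=\log\overline{F}^n_i$ are non-negative: the assumption $F^n_k[0]=0$ forces $\overline{F}^n_k\geqslant 1$, hence $X^n_i\geqslant 0$. The limit $f$ is continuous and non-decreasing on $[0,1]$ by \Cref{A2}.

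The first step is to exploit the uniform continuity of $f$ on the compact set $[0,1]$: given any $\varepsilon>0$, I choose a partition $0=t_0<t_1<\cdots<t_K=1$ fine enough that $f(t_{j+1})-f(t_j)<\varepsilon$ for every $j=0,\ldots,K-1$. The second step is to invoke the pointwise convergence from \Cref{A2} at the \emph{finite} collection of nodes $t_0,\ldots,t_K$: there exists $N=N(\varepsilon)$ such that for all $n\geqslant N$,
\begin{equation*}
\left|\frac{S^n_{\abs{nt_j}}}{a_n}-f(t_j)\right|<\varepsilon,\qquad j=0,1,\ldots,K.
\end{equation*}

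The third step is the sandwich argument. For any $t\in[t_j,t_{j+1}]$, monotonicity of both $S^n_{\abs{n\cdot}}/a_n$ and $f$ gives
\begin{equation*}
\frac{S^n_{\abs{nt_j}}}{a_n}\leqslant \frac{S^n_{\abs{nt}}}{a_n}\leqslant \frac{S^n_{\abs{nt_{j+1}}}}{a_n},\qquad f(t_j)\leqslant f(t)\leqslant f(t_{j+1}).
\end{equation*}
Combining this two-sided control with the pointwise bounds at the $t_j$'s and with $f(t_{j+1})-f(t_j)<\varepsilon$ yields $|S^n_{\abs{nt}}/a_n-f(t)|\leqslant 3\varepsilon$ simultaneously for all $t\in[0,1]$ and all $n\geqslant N$, which is precisely uniform convergence.

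There is essentially no serious obstacle in this argument; the only point that deserves care is the verification of monotonicity of $t\mapsto S^n_{\abs{nt}}/a_n$, which is what allows pointwise control on a finite grid to propagate to control on the whole interval. Were the increments $X^n_i$ not assumed non-negative, one would instead need a tightness/equicontinuity estimate on $S^n_{\abs{n\cdot}}/a_n$, which in turn would require quantitative moderate-deviation input; the non-negativity baked into \Cref{A2} via $F^n_k[0]=0$ bypasses this complication entirely.
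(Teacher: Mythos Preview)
Your proposal is correct and follows essentially the same P\'olya-type argument as the paper: partition $[0,1]$ by uniform continuity of $f$, use pointwise convergence at the finitely many grid points, and sandwich via monotonicity. The only cosmetic difference is that the paper first passes to the piecewise linear interpolation $f_n$ of the step function before running the identical sandwich argument; your version applies it directly to $S^n_{\abs{n\cdot}}/a_n$ and is slightly cleaner for it.
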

	
	\begin{proof}[Proof of \Cref{Remark of Convergence}]
		Define the continuous path
		\begin{equation*}
			f_n(t):=
			\begin{cases}
				\dfrac{S^n_{\lfloor nt\rfloor}}{a_n}  + \dfrac{1}{a_n} 
				\left(nt-\lfloor nt\rfloor\right) X_{\lfloor nt\rfloor +1},\quad &t\in[0,1)
				\\
				\dfrac{S_n^n}{a_n},\quad &t=1.
			\end{cases}
		\end{equation*}
		
		Under \Cref{A2}, it is not hard to find the increasing functions $f_n(t)$ also convergence to $f(t)$ in $[0,1]$.
		For any $\epsilon >0$, due to the continuous of $f$, we know there exist $\delta>0$, such for any $|x-y|<\delta$, then $|f(x)-f(y)|<\epsilon$.
		Choose some $N$ large enough, such $ \dfrac{1}{N} <\epsilon $,
		then for any $0\leqslant k\leqslant N-1$, 
		\[
		\left| f(\frac{k}{N})-f(\frac{k+1}{N}) \right|<\epsilon.
		\]
		For each $0\leqslant k\leqslant N$, as $f_n(\dfrac{k}{N}) \rightarrow f(\dfrac{k}{N})$, so there exist $C(k)$ large enough such that for any $n>C(k)$, 
		\[
		\left| f_n(\dfrac{k}{N})-f(\dfrac{k}{N}) \right|<\epsilon.
		\]
		Let $C:= \max\{C(k):0\leqslant k\leqslant N\}$, then for any $n>C$ and all $x\in [0,1)$,
		\begin{align*}
			|f_n(x)-f(x)|
			&\leqslant \left|f(x)-f_n(\dfrac{\lfloor Nx\rfloor}{N} ) \right|
			+
			\left| f_n(\dfrac{\lfloor Nx\rfloor}{N} ) -f(\dfrac{\lfloor Nx\rfloor}{N} ) \right|
			+\left| f(\dfrac{\lfloor Nx\rfloor}{N} ) -f(x)\right|
			\\
			&<
			f_n(\dfrac{\lfloor Nx\rfloor+1}{N})-f_n(\dfrac{\lfloor Nx\rfloor}{N} )
			+\epsilon
			+\epsilon
			\\
			&\leqslant
			f(\dfrac{\lfloor Nx\rfloor+1}{N})+\epsilon-
			\left(f(\dfrac{\lfloor Nx\rfloor}{N} )-\epsilon\right)
			+2\epsilon
			\\
			&=
			f(\dfrac{\lfloor Nx\rfloor+1}{N} )-f(\dfrac{\lfloor Nx\rfloor}{N} )
			+4\epsilon
			<5\epsilon.
		\end{align*}
		Thus the proof is over.
	\end{proof}
	
	\begin{assumption}\label{A5}
		Under \Cref{A2}, for the time-inhomogeneous branching processes, for any $\varepsilon>0$, and each $t\in (0,1]$, 
		\begin{equation*}
			\lim\limits_{n\rightarrow\infty}
			P_n\left(\left|\dfrac{\log Z_{\abs{nt}}^n}{a_n}-f(t)\right|>\varepsilon\right)=0
		\end{equation*}
	\end{assumption}
	
	\begin{remark}
		About the \cref{Techanical assumption} in \Cref{A2}, it is a technical assumption.
		What's more, if all the branching mechanisms belong to the linear fractional case, \cref{Techanical assumption} can result to the \Cref{A5} directly (just calculate the generating function). 
		However, for more general cases, although \Cref{A2} are satisfied, \Cref{A5} may be false.
		
		For classical supercritical branching processes (non-extinction), when the $L\log L$ condition is satisfied, the martingale $\{W_n:=\frac{Z_n}{m^n}\}$ uniformly convergence to some non-negative random variable $W$, and satisfies $\P(W>0)=\P\left(\{\text{survival forever}\}\right)=1$ which means the \Cref{A5} is right. However, for general time-inhomogeneous branching process, we don't have such results. In \Cref{Sec2.6},  when estimating the maximal displacement of branching random walk, \Cref{A5} plays an essential role. Under \Cref{A5}, the time-inhomogeneous branching processes evolve very close to the supercritical cases, thus we can use the deviation results together with exponent increasing property to estimate the lower bound of $M_n^n$.
	\end{remark}

	\subsection{The Sample Path Moderate Deviation Principle}\label{Sec2.2}
	
	For any $\nu\in N_n^{n}$, we define following path re-scale.
	
	\begin{definition}\label{Path scale} 
		For any $\nu\in N_n^n$, for each $0\leqslant k\leqslant n$,
		use $V^n_\nu(k)$ denote the position of the $k$-generation ancestor of $\nu$, thus the path of $\nu$ is $ V^n_\nu:= \{ V^n_\nu(0),V^n_\nu(1),\cdots,V^n_\nu(n) \} $.
		Define the function $V_\nu^{n,a_n}$ on $[0,1]$
		to be the  re-scaled path of $\nu$,
		\begin{align*}
			V_\nu^{n,a_n}(t):=
			\begin{cases}
				\dfrac{1}{\sqrt{na_n}}
				\left(  \left(\abs{nt}+1-nt\right)V^n_\nu\left(\abs{nt}\right)+   \left(nt-\abs{nt}\right) V^n_\nu\left(\abs{nt}+1\right) \right)   , &t\in [0,1).
				\\
				\dfrac{1}{\sqrt{na_n}} V^n_\nu\left( n \right), & t=1.
			\end{cases} 
		\end{align*}
	\end{definition}
	
	For any $r\in [0,1]$, the space $C_0([0,r])$ is the set of continue functions on $[0,r]$ with value $0$ at $0$, and endow it with the supremum norm $\|\cdot\|$.
	For any $r>0$, set $H_r:=\left\{ g(t); g^\prime\in L_2[0,r] \right\}$ denote the space of all absolutely continuous function with value $0$ at $0$ that posses a square integrable derivation. Use $I_r(\cdot)$ denote the Schilder rate function of Brownian motion on $C_0([0,r])$:
	\begin{align*}
		I_r(g):= 
		\begin{cases}
			\frac{1}{2} \int_{0}^{r} g^{\prime}(s)^2 \d s \quad &g \in H_r
			\\
			+\infty &\text{otherwise.}
		\end{cases}
	\end{align*}
	According to the Schilder theorem,
	$I_r$ is a good rate function:
	for any $0\leqslant\alpha<\infty$, the level set $\{f\in C_0([0,r]): I_r(f)\leqslant \alpha \}$ is a compact subset of $C_0\left([0,r]\right)$.
	
	\
	
	For the path of time-inhomogeneous branching random walk,
	we have following results,
	which can be viewed as a kind of moderate deviation principle.
	
	\
	
	\begin{theorem}\label{Moderate Deviation}
		Under \Cref{A2}, 
		there is a moderate deviation principle for the path of the time-inhomogeneous branching random walk, i.e., 
		\begin{itemize}
			\item Upper bound:
			If $C$ is a closed subset of $C_0([0,1])$, then
			\[
			\limsup\limits_{n\rightarrow\infty} \dfrac{1}{a_n} \log 
			P_n \left( \exists \nu\in N_n^{n} : V_{\nu}^{n,a_n} \in C  \right)
			\leqslant -\inf\limits_{g\in C} S_f(g). 
			\]
			
			\item Lower bound: If $G$ is an open subset of $C_0([0,1])$, then
			\[
			\liminf\limits_{n\rightarrow\infty} \dfrac{1}{a_n} \log 
			P_n \left( \exists \nu\in N_n^{n} : V_{\nu}^{n,a_n}  \in G  \right)
			\geqslant -\inf\limits_{g\in G} S_f(g). 
			\]
		\end{itemize}
		Here $S_f(g) := \begin{cases}
			\sup\limits_{r \in [0,1]} \left\{ \int_{0}^{r} 
			\dfrac{1}{2}  g^{\prime}(s)^2 \d s- f(r) \right\}, \quad &\text{for } g\in H_1;
			\\
			+\infty, \quad  &\text{otherwise.}
		\end{cases}$
	\end{theorem}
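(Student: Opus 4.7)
The plan is to prove this path moderate deviation principle at scale $a_n$ by the classical two-sided approach: the upper bound via a first-moment (many-to-one plus Markov) argument, and the lower bound via a second-moment (Paley--Zygmund) argument. The rate function $S_f(g) = \sup_{r\in[0,1]}(I_r(g|_{[0,r]}) - f(r))$ arises by optimizing over a truncation time $r$, which picks out the ``bottleneck'' generation at which the expected number of ancestors whose rescaled path tracks $g$ is smallest. The time-inhomogeneous many-to-one identity
\[
E_n\!\left[\sum_{\nu\in N_k^n} h(V_\nu^n)\right] = e^{S_k^n}\,\mathbb{E}[h(W_k)]
\]
(with $W$ a standard Gaussian random walk) reduces everything to a Schilder-type moderate deviation for Gaussian paths at speed $a_n$ and spatial scale $\sqrt{na_n}$, while condition \eqref{Techanical assumption} controls the second-moment corrections due to shared ancestry.

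For the upper bound, if $\nu\in N_n^n$ has $V_\nu^{n,a_n}\in C$ then for every $r\in[0,1]$ its generation-$\lfloor nr\rfloor$ ancestor has rescaled path in the projection $\pi_r(C)\subset C_0([0,r])$. Combining many-to-one at generation $\lfloor nr\rfloor$ with the Gaussian moderate deviation and $S_{\lfloor nr\rfloor}^n/a_n\to f(r)$ from \Cref{Remark of Convergence} gives
\[
E_n\!\left[\#\{\nu\in N_{\lfloor nr\rfloor}^n : V_\nu^{n,a_n}|_{[0,r]}\in\pi_r(C)\}\right]\leq \exp\!\left(a_n\bigl(f(r)-\inf_{g\in\pi_r(C)} I_r(g) + o(1)\bigr)\right).
\]
Markov's inequality, infimum over $r$, and then exchanging the infimum over $g$ with the supremum over $r$ using compactness of the Schilder level sets and an exponential tightness argument, yield $\limsup_{n\to\infty}\tfrac{1}{a_n}\log P_n(\exists\,\nu\in N_n^n: V_\nu^{n,a_n}\in C) \leq -\inf_{g\in C}S_f(g)$.

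For the lower bound, fix $g$ with $S_f(g)<\infty$, an $\epsilon$-ball inside $G$, and a maximizer $r^*\in[0,1]$ of $r\mapsto I_r(g|_{[0,r]})-f(r)$. Let $N^*$ count particles in generation $\lfloor nr^*\rfloor$ whose rescaled path lies within $\epsilon/2$ of $g|_{[0,r^*]}$. Many-to-one with the Gaussian MDP lower bound gives $E_n[N^*]\geq \exp(-a_n(S_f(g)+o(1)))$. Decomposing $E_n[N^*(N^*-1)]$ by the most recent common ancestor at generation $j$, the many-to-two computation, together with the additivity $I_{r^*}(g|_{[0,r^*]})=I_{j/n}(g|_{[0,j/n]})+(I_{r^*}-I_{j/n})$ and the identity $E_n[Z_1(Z_1-1)]_{j+1}=\tilde F_{j+1}^n(\bar F_{j+1}^n)^2$, produces
\[
\frac{E_n[N^*(N^*-1)]}{E_n[N^*]^2}\leq e^{o(a_n)}\sum_{j=0}^{\lfloor nr^*\rfloor-1}\tilde F_{j+1}^n\exp\!\bigl(a_n(I_{j/n}(g|_{[0,j/n]})-f(j/n))\bigr).
\]
Using $I_{j/n}-f(j/n)\leq S_f(g)$ for every $j$ together with $1+\sum_j\tilde F_j^n\leq e^{o(a_n)}$ from \eqref{Techanical assumption}, the right side is at most $e^{a_n S_f(g)+o(a_n)}$, and Paley--Zygmund yields $P_n(N^*\geq 1)\geq \exp(-a_n(S_f(g)+o(1)))$. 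Finally, by maximality of $r^*$, for every $r\in[r^*,1]$ we have $I_r-I_{r^*}\leq f(r)-f(r^*)$; an analogous first-plus-second-moment argument on the subtree rooted at a successful ancestor then shows that with conditional probability $\Omega(1)$, some descendant at generation $n$ has path within $\epsilon$ of $g$ on all of $[0,1]$. Multiplying gives the matching lower bound $-S_f(g)$.

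The main technical obstacle should be obtaining Gaussian moderate deviation estimates uniformly over all subintervals $[0,r]$ and over open neighborhoods of all $g$ in a Schilder level set (needed to justify the inf/sup exchange and the $o(1)$ corrections above), together with careful passage from integer generations to the piecewise-linear interpolation of \Cref{Path scale}. The MRCA second-moment bookkeeping is notationally heavy but reduces cleanly to the sum $\sum_j\tilde F_j^n$, at which point \eqref{Techanical assumption} closes the estimate.
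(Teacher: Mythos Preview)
Your upper bound matches the paper's: many-to-one at an arbitrary truncation time $r$, Markov's inequality, then the standard local-to-global lift via exponential tightness (the paper does this in \Cref{local upper bound} and \Cref{Sec2.5}, invoking the Schilder MDP of \Cref{MDP for RW} and the goodness of $I_1$).

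For the lower bound you take a genuinely different route. The paper follows Hardy--Harris and uses the additive-martingale / spine change of measure: it tilts $P_n$ by the martingale $W_k^{n}$ built from the piecewise drifts $\lambda_i^n=\sqrt{na_n}\bigl(g(i/n)-g((i-1)/n)\bigr)$, so that under the tilted law $Q_n$ a distinguished spine particle has rescaled path near $g$ with probability tending to $1$. The only analytic input is an $\alpha$-moment bound $Q_n[(W_n^{n})^\alpha]$ obtained from the spine decomposition (\Cref{M of M_n under Q}), which yields $Q_n\bigl(\sup_k W_k^{n}\le e^{a_n(S_f(g)+\epsilon)}\bigr)\to 1$; changing back to $P_n$ gives the local lower bound directly, with no bottleneck time $r^*$ ever singled out.

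Your Paley--Zygmund approach is also valid, but two comments. First, the two-stage decomposition at $r^*$ is unnecessary: running the MRCA expansion for $N=\#\{\nu\in N_n^{n}:V_\nu^{n,a_n}\in\mathbb B_\epsilon(g)\}$ directly already gives $E_n[N(N-1)]/(E_nN)^2\le e^{a_nS_f(g)+o(a_n)}$ and $E_nN\ge e^{-a_nS_f(g)+o(a_n)}$, so Paley--Zygmund finishes in one shot. Splitting at $r^*$ only introduces an extra conditioning step (the subtree law given $N^*\ge 1$) that must then be handled via the branching property and uniformity over the location of the chosen ancestor. Second, the ``conditional probability $\Omega(1)$'' claim in your extension step is not quite right: since $S_{\tilde f}(\tilde g)=0$ on $[r^*,1]$, the second-moment bound there only delivers $e^{-o(a_n)}$, which still suffices at the log scale but is not bounded away from zero.

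The trade-off: the spine argument is cleaner because a single $\alpha$-moment under $Q_n$ packages all the ancestry correlations and the $\sup_r$ in $S_f$ falls out automatically from the spine sum over splitting times, whereas your approach is more elementary (no new measure, no size-biasing) at the cost of heavier many-to-two bookkeeping. In both proofs the role of \eqref{Techanical assumption} is identical: it guarantees that the total correlation correction $\sum_k\tilde F_k^{n}$ is sub-exponential in $a_n$.
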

	
	\
	
	\begin{remark}
		Hardy and Harris prove related results for binary branching Brownian motion in \cite{Harris2006}, and we just extend such path deviation principle to the general time-inhomogeneous branching system.
	\end{remark}
	
	\
	
	About the rate function $S_f(\cdot)$ in \Cref{Moderate Deviation}, we have 
	
	\
	
	\begin{theorem}\label{Good rate function}
		For any $\alpha\in[0,\infty)$, 
		the level set $ \{g:S_f(g)\leqslant \alpha\} $ 
		is compact in $C_0\left([0,1]\right)$, which means
		$S_f(g)$ 
		is a good rate function on $C_0\left([0,1]\right)$.
	\end{theorem}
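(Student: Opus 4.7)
The plan is to show that for each $\alpha \in [0,\infty)$ the level set $L_\alpha := \{g \in C_0([0,1]) : S_f(g) \leqslant \alpha\}$ is both closed in $C_0([0,1])$ and contained in a set already known to be compact; compactness then follows from the standard fact that a closed subset of a compact set is compact. Closedness will come from lower semi-continuity of $S_f$ with respect to the supremum norm, and the containment will come from specializing the defining supremum to a single value of $r$.

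For the containment, I would simply take $r=1$ inside the supremum defining $S_f$ to get
\begin{equation*}
I_1(g) - f(1) \;\leqslant\; S_f(g) \;\leqslant\; \alpha \qquad \text{for every } g \in L_\alpha,
\end{equation*}
so $L_\alpha \subseteq \{g \in C_0([0,1]) : I_1(g) \leqslant \alpha + f(1)\}$. The latter set is compact in $C_0([0,1])$ because, as recalled just before the statement, $I_1$ is the Schilder good rate function of Brownian motion on $C_0([0,1])$. In particular every element of $L_\alpha$ automatically lies in $H_1$, so we may treat $L_\alpha$ as a subset of $H_1$ throughout.

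For lower semi-continuity, I would fix $r \in [0,1]$ and first argue that the map $g \mapsto I_r(g)$ (extended by $+\infty$ whenever the restriction $g|_{[0,r]}$ fails to be in $H_r$) is lower semi-continuous on $C_0([0,1])$. Indeed, if $g_n \to g$ uniformly on $[0,1]$, then $g_n|_{[0,r]} \to g|_{[0,r]}$ uniformly on $[0,r]$, and the Schilder good rate function on $C_0([0,r])$ is itself lower semi-continuous (equivalent to having closed level sets), so $I_r(g) \leqslant \liminf_{n\to\infty} I_r(g_n)$. Hence each map $g \mapsto I_r(g) - f(r)$ is lower semi-continuous on $C_0([0,1])$, and $S_f$, being the pointwise supremum of this family over $r \in [0,1]$, inherits lower semi-continuity. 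Consequently $L_\alpha$ is closed in $C_0([0,1])$, and together with the containment above this gives compactness.

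I do not expect a serious obstacle here: the whole argument is the standard \emph{lower semi-continuous plus containment in a compact set} pattern once one notices that the supremum over $r \in [0,1]$ is controlled, for sub-level purposes, by its value at $r = 1$. The only minor checks are that uniform convergence on $[0,1]$ restricts to uniform convergence on $[0,r]$ (immediate) and that the supremum of an arbitrary family of lower semi-continuous functions remains lower semi-continuous (a basic topological fact). The continuity and monotonicity of $f$ posited in Assumption~\ref{A2} play no role in this proof; mere finiteness of $f(1)$ is enough.
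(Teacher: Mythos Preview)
Your proof is correct and follows essentially the same approach as the paper: both exploit the containment $L_\alpha \subseteq \{g : I_1(g) \leqslant \alpha + f(1)\}$ coming from $r=1$, and both use the lower semi-continuity of each $I_r$ (inherited from goodness of the Schilder rate function) to conclude that limits of sequences in $L_\alpha$ remain in $L_\alpha$. The only difference is packaging: you state the argument abstractly as ``$S_f$ is a supremum of l.s.c.\ functions, hence l.s.c., hence $L_\alpha$ is a closed subset of a compact set,'' while the paper carries out the same reasoning by hand on a convergent subsequence.
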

	
	\begin{proof}[Proof of \Cref{Good rate function}]
		Fixed $\alpha\in[0,\infty)$,
		for any $g_n\in C_0\left([0,1]\right) $ 
		such that $S_f(g_n)\leqslant\alpha$, 
		then $I_1(g_n)\leqslant \alpha +f(1)$.
		And due to $ I_1(\cdot) $ is a good rate function, 
		we know 
		$ \{g:I_1(g)\leqslant\alpha+f(1)\} $ is a compact set.
		So, for any subsequence $g_{n_k}$ such that 
		$ g_{n_k} $ convergence to some function $ g $,
		we know $g$ also satisfies $I_1(g)\leqslant \alpha+f(1)<\infty$.
		
		For each $r\in[0,1]$, and $I_r$ is also a good rate function, confined to the interval $[0,r]$, we have
		\begin{align*}
			\int_{0}^{r} \dfrac{1}{2}g^{\prime}(s)^2 \d s - f(r)
			&=I_r(g) -f(r)
			\leqslant
			\liminf\limits_{k\rightarrow\infty}I_r(g_{n_k})-f(r)
			\nonumber\\
			&=
			\liminf\limits_{k\rightarrow\infty}
			\left[\int_{0}^{r}\dfrac{1}{2}g_{n_k}^{\prime}(s)^2 \d s - f(r)\right]
			\leqslant
			\liminf\limits_{k\rightarrow\infty}S_f(g_{n_k})
			\leqslant \alpha.
		\end{align*}
		Due to the arbitrariness of $r\in [0,1]$, then $S_f(g)\leqslant\alpha$.
		Due to the arbitrariness of subsequence,
		we know $\left\{ g:S_f(g)\leqslant \alpha \right\}$ is a compact set.
	\end{proof}
	
	\begin{remark}
		In Theorem 8.1 of \cite{Harris2006}, Hardy and Harris prove the deviation principle firstly, and then use the result to prove the goodness of the rate function. In \Cref{Good rate function}, relying on the goodness of Schilder rate function $I(\cdot)$, together with the definition of $S_f(\cdot)$, we can prove its goodness directly.
	\end{remark}
	
	\
	
	The proof of \Cref{Moderate Deviation} is mainly divided into three parts: in \Cref{Sec2.3}, depending on the many-to-one formula, we will prove the local upper bound; in \Cref{Sec2.4}, using the spine decomposition, we will prove the local lower bound; in \Cref{Sec2.5}, follow the steps of Section 7 in \cite{Harris2006}, firstly establish the weakly deviation principle and due to the goodness of rate function which means the exponential tightness, then \Cref{Moderate Deviation} will be proved.

	\subsection{Local Upper Bound}\label{Sec2.3}
	
	We at first need the path moderate deviation principle of the random walk.

	\begin{lemma}\label{MDP for RW}
		Assume $a_n\rightarrow\infty$ and $\frac{a_n}{n} \rightarrow 0$, let $(B_0:=0,B_1,\cdots,B_n)$ be a random walk with one step distribution $\mathcal{N}(0,1)$,
		denote $\mu_n$ as the law of 
		\[	B_n^{a_n}(\cdot):= \begin{cases}
			\dfrac{1}{\sqrt{na_n}} \left(  \left(\abs{nt}+1-nt\right) B_{\lfloor nt \rfloor} + \left(nt-\abs{nt}\right)B_{\lfloor nt \rfloor+1}  \right) ,&t\in[0,1);
			\\
			\dfrac{1}{\sqrt{na_n}} B_n, &t=1. \end{cases} \]
	    in $C_0([0,1])$, then
		$\{\mu_n\}$ satisfies the Large Deviation Principle with the good rate function $I_1(\cdot)$ at speed $a_n$ in $C_0([0,1])$.
	\end{lemma}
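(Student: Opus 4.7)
The plan is to reduce the claim to Schilder's theorem through an exact Gaussian coupling. Since the increments of $(B_k)$ are $\mathcal{N}(0,1)$, one can realise the walk as $B_k = W_k$ for a standard Brownian motion $W$ on $[0,\infty)$, and then set $\tilde{W}_t := W_{nt}/\sqrt{n}$, which is itself a standard Brownian motion on $[0,1]$ by diffusive scaling. Then $B_k/\sqrt{na_n} = \tilde{W}_{k/n}/\sqrt{a_n}$, so the polygonal path $B_n^{a_n}$ is nothing but $1/\sqrt{a_n}$ times the polygonal interpolation of $\tilde{W}$ at the grid $\{k/n\}_{k=0}^{n}$.

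With this coupling in place, the argument has two steps. First, Schilder's theorem applied to $\tilde W$ gives the LDP for the law of $\tilde{W}/\sqrt{a_n}$ in $C_0([0,1])$ at speed $a_n$ with good rate function $I_1$. Second, I verify that $B_n^{a_n}$ and $\tilde{W}/\sqrt{a_n}$ are exponentially equivalent in $C_0([0,1])$ at speed $a_n$; the standard exponential-approximation principle then transfers the LDP to $\{\mu_n\}$ with the same speed and rate function.

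The exponential equivalence reduces to a Brownian-bridge estimate. On each sub-interval $[k/n,(k+1)/n]$, the discrepancy $\tilde{W}_t - (\text{linear interpolation at the endpoints})$ is a Brownian bridge of length $1/n$, hence by Brownian scaling is distributed as $n^{-1/2}$ times a standard Brownian bridge $B^{\mathrm{br}}$ on $[0,1]$; the bridges on disjoint sub-intervals are independent. Using the classical Gaussian tail bound $P(\sup_{[0,1]}|B^{\mathrm{br}}|>x) \leq 2 e^{-2x^2}$ and a union bound over the $n$ intervals yields
\[
P\!\left(\|B_n^{a_n} - \tilde{W}/\sqrt{a_n}\|_\infty > \delta\right) \leq 2n \exp\!\left(-2\delta^2 n a_n\right),
\]
so that $\tfrac{1}{a_n}\log P(\cdots) \leq \log(2n)/a_n - 2\delta^2 n \to -\infty$ as $n\to\infty$, since $a_n\to\infty$ forces $\log n/a_n = o(n)$. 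This is the required exponential negligibility.

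No serious obstacle is anticipated. The Gaussian assumption on the jump distribution is precisely what makes the coupling $B_k = W_k$ exact and allows Schilder's theorem to be invoked on $\tilde{W}$ directly, rather than having to go through the more delicate Mogulskii-type path-level moderate deviations for general Cram\'er-type increments. The goodness of the limiting rate function $I_1$ is inherited from Schilder, and exponential equivalence preserves both speed and rate function, completing the proof.
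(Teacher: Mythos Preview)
Your proposal is correct and follows essentially the same route as the paper's own proof: both embed the Gaussian walk into a Brownian motion, invoke Schilder's theorem for the rescaled path $W_{nt}/\sqrt{na_n}$, and then establish exponential equivalence with the polygonal interpolation via the same Brownian-bridge tail bound $2n\exp(-2\delta^2 n a_n)$, appealing to the exponential-equivalence transfer principle to conclude.
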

	
	\begin{proof}[Proof of \Cref{MDP for RW}]
		We can enlarge the probability space, let $\{B_t,t\in [0,\infty) \}$ is a standard Brownian motion.
		Note that $\mu_n$ is the law of $B^{a_n}_n(\cdot) $ in $C_0([0,1])$.
		Use $\nu_n$ denote the law of $ B^n(t) := \dfrac{1}{\sqrt{na_n}} B_{nt} $ in $C_0([0,1])$,
		due to the scale property of Brownian motion, $\nu_n $ is also the law of $ \dfrac{B_t}{\sqrt{a_n}} $ in $C_0([0,1])$.
		
		According to the Schilder Theorem (see Theorem 5.2.3 in \cite{LDP1998}), we know 
		$\nu_n$ satisfied the LDP with rate function $I_1(\cdot)$ at speed $a_n$ in $C_0([0,1])$.
		
		And for each $\epsilon>0$,
		\begin{align*}
			&\P( \exists t\in [0,1], \left|\dfrac{B_{nt}}{\sqrt{na_n}}- B^{a_n}_n(t)\right|>\epsilon )
			\\
			\leqslant
			&\sum_{k=1}^n\P\left( \exists t\in [k-1,k],|(B_t-B_k)-t(B_{k+1}-B_k)|>\epsilon \sqrt{na_n} \right)
			\\
			=&
			n\P^{0,0}_{1,0}\left( \max\limits_{t\in [0,1]} |B_t|> \epsilon \sqrt{na_n} \right)
			\\
			\leqslant
			&n\P^{0,0}_{1,0}\left( \max\limits_{t\in [0,1]} B_t> \epsilon \sqrt{na_n} \right)
			+n
			\P^{0,0}_{1,0}\left( \min\limits_{t\in [0,1]} B_t< -\epsilon \sqrt{na_n} \right)
			\\
			=
			&2n\P^{0,0}_{1,0}\left( \max\limits_{t\in [0,1]} B_t> \epsilon \sqrt{na_n} \right) = 2n \exp\left( -2\epsilon^2na_n \right)
		\end{align*}
		Here under $ \P^{0,0}_{1,0} $, $\{B_t,t\in [0,1]\}$ is a Brownian bride form $0$ at time $0$ to $0$ at time $1$, and the last equality is based on the the probability of a Brownian bridge to stay below a straight line.
		And it follows, by considering $n\rightarrow\infty$, that
		\begin{equation*}
			\lim\limits_{n\rightarrow\infty}
			\frac{1}{a_n}\log \P( \exists t\in [0,1], \left|\dfrac{B_{nt}}{\sqrt{na_n}}- B_n^{a_n}(t)\right|>\epsilon )
			= -\infty.
		\end{equation*}
		Therefore, the probability measure $\mu_n$ and $\nu_n$ are exponentially equivalent. 
		By Theorem 4.2.13 in \cite{LDP1998}, 
		the proof of \Cref{MDP for RW} now is over.
	\end{proof}
	
	\begin{remark}
		In fact, for normal distribution $\ND(0,1)$, the condition $\lim\limits_{n\rightarrow\infty} \frac{a_n}{n} = 0$ can be omitted in \Cref{MDP for RW}.
		And for more general random walk which satisfies the Cram$\acute{e}$r's condition, together with the moderate deviation (see Theorem 3.7.1 in \cite{LDP1998}, and the condition $\lim\limits_{n\rightarrow\infty} \frac{a_n}{n} = 0$ is important), we can prove \Cref{MDP for RW} by repeating the proof of Theorem 5.1.2 in \cite{LDP1998}.
	\end{remark}
	
	\
	
	Now we can prove  the local upper bound of \Cref{Moderate Deviation}.
	
	\begin{theorem}\label{local upper bound}
		For any $g\in C_0([0,1])$, we have
		\begin{equation}\label{eq upper bound}
			\lim\limits_{\delta\rightarrow 0}
			\limsup\limits_{n\rightarrow\infty} 
			\dfrac{1}{a_n} \log P_n\left( \exists \nu\in N_n^{n} : V_{\nu}^{n,a_n} \in  \mathbb{B}_\delta(g)   \right)
			\leqslant - S_f(g).
		\end{equation}
		Here $ \mathbb{B}_\delta(g) := 
		\{\rho\in C_0\left([0,1]\right): 
		\left| \rho(t)-g(t)\right|<\delta, \forall t\in [0,1]\}$.
	\end{theorem}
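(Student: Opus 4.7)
The plan is to pick an arbitrary intermediate time $r\in(0,1]$, reduce the event on all of $[0,1]$ to an event at generation $\lfloor nr\rfloor$ via the tree structure, bound the resulting probability by Markov's inequality together with the many-to-one formula, and then invoke the rescaled random-walk MDP (\Cref{MDP for RW}) and the profile asymptotics of \Cref{A2}.

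We may assume $g\in H_1$; otherwise $S_f(g)=+\infty$ and the same argument forces the left-hand side to $-\infty$. Fix $r\in(0,1]$ and observe that whenever some $\nu\in N_n^n$ satisfies $V_\nu^{n,a_n}\in\mathbb{B}_\delta(g)$, its ancestor $\nu_r\in N_{\lfloor nr\rfloor}^n$ satisfies $\sup_{t\in[0,\lfloor nr\rfloor/n]}|V_{\nu_r}^{n,a_n}(t)-g(t)|<\delta$, because by \Cref{Path scale} the rescaled path is determined by the positions up to generation $\lfloor nr\rfloor$ and so agrees with the ancestor's rescaled path on that sub-interval. Markov's inequality and the many-to-one formula applied at generation $\lfloor nr\rfloor$ (the expected count of particles satisfying a pathwise condition equals $e^{S_{\lfloor nr\rfloor}^n}$ times the corresponding probability for the associated Gaussian random walk) yield
\[
P_n\!\left(\exists \nu\in N_n^n:V_\nu^{n,a_n}\in\mathbb{B}_\delta(g)\right)\leq e^{S_{\lfloor nr\rfloor}^n}\,P\!\left(\sup_{t\in[0,\lfloor nr\rfloor/n]}|B_n^{a_n}(t)-g(t)|\leq\delta\right),
\]
where $B_n^{a_n}$ is the rescaled random walk from \Cref{MDP for RW}.

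Take logarithms and divide by $a_n$. The first factor tends to $f(r)$ by \Cref{A2}. For the second factor, the LDP supplied by \Cref{MDP for RW} (applied to the closed set $\{h\in C_0([0,1]):\|h-g\|_\infty\text{ on }[0,\lfloor nr\rfloor/n]\leq\delta\}$), combined with the identity $I_1(h)=I_r(h|_{[0,r]})+\tfrac{1}{2}\int_r^1(h'(s))^2\,\d s$ (so that extending constantly is optimal) and the fact that $\lfloor nr\rfloor/n\to r$, produces the upper bound
\[
\limsup_{n\to\infty}\frac{1}{a_n}\log P_n\!\left(\exists \nu\in N_n^n:V_\nu^{n,a_n}\in\mathbb{B}_\delta(g)\right)\leq f(r)-\inf_{\rho\in\bar{\mathbb{B}}_\delta(g|_{[0,r]})}I_r(\rho).
\]

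Sending $\delta\to 0$: since $I_r$ is a good (hence lower semicontinuous) rate function and $g|_{[0,r]}$ always lies in the closed ball, $\lim_{\delta\to 0}\inf_{\bar{\mathbb{B}}_\delta(g|_{[0,r]})}I_r(\rho)=I_r(g|_{[0,r]})$, so the bound simplifies to $-(I_r(g)-f(r))$. As $r\in(0,1]$ was arbitrary, infimizing over $r$ gives
\[
\lim_{\delta\to 0}\limsup_{n\to\infty}\frac{1}{a_n}\log P_n\!\left(\exists \nu\in N_n^n:V_\nu^{n,a_n}\in\mathbb{B}_\delta(g)\right)\leq -\sup_{r\in[0,1]}\bigl(I_r(g)-f(r)\bigr)=-S_f(g),
\]
which is precisely \cref{eq upper bound}. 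The main delicacy is the $\delta\to 0$ limit of the Schilder infimum over closed balls, which rests on lower semicontinuity of $I_r$; the discrepancy between the interval $[0,\lfloor nr\rfloor/n]$ and $[0,r]$ is of order $1/n$ and is absorbed exactly as in \Cref{Remark of Convergence}.
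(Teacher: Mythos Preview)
Your proposal is correct and follows essentially the same route as the paper: fix $r\in(0,1]$, pass to the ancestor at generation $\lfloor nr\rfloor$, apply the many-to-one formula to get the factor $e^{S^n_{\lfloor nr\rfloor}}$ times a random-walk probability, invoke \Cref{MDP for RW} for the latter, send $\delta\to 0$ using lower semicontinuity of $I_r$, and finally optimize over $r$. If anything you are slightly more explicit than the paper about the $\delta\to 0$ step (via lower semicontinuity of the good rate function $I_r$) and about the harmless discrepancy between $[0,\lfloor nr\rfloor/n]$ and $[0,r]$, whereas the paper simply writes the final inequality $\lim_{\delta\to 0}\limsup_n a_n^{-1}\log P_n(\,\cdot\,)\leq -\int_0^r\tfrac12 g'(s)^2\,\d s$ directly.
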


	\begin{proof}[Proof of \Cref{local upper bound}]
		First note that as $\delta\rightarrow0$, \cref{eq upper bound} is decreasing, which means the limit does exist.
		
		Under $P_n$, the sequence $ (B_0,B_1,\cdots,B_n)$ 
		is a random walk with one step distribution $\mathcal{N}(0,1)$.
		For each $r\in[0,1]$,
		Using many-to-one formula (in fact, calculate the mean of branching random walk), 
		\begin{align}
			P_n\left( \exists \nu\in N_n^{n} : V_{\nu}^{n,a_n}\right.&\left. \in  \mathbb{B}_\delta(g) \right)
			\leqslant
			P_n\left(  \exists \nu\in N_n^{n} : |V_{\nu}^{n,a_n}(s) - g(s)| <\delta ,\forall s \in [0,r] \right)
			\nonumber
			\\
			&\leqslant
			P_n\left(  \exists \nu\in N_{\abs{nr}}^{n} : 
			\left|V_{\nu}^{n,a_n}(s) - g(s)\right| <\delta,
			\forall s \in [0,r] \right)
			\nonumber
			\\
			&\leqslant
			E_n\left( \sum\limits_{\nu\in N_{\abs{nr}}^{n}} 
			\ind{|V_{\nu}^{n,a_n}(s) - g(s)| <\delta ,\forall s \in [0,r]} \right)
			\nonumber\\
			&=
			\exp\left(S^n_{\lfloor nr \rfloor}\right)
			P_n \left( \left| B^{a_n}_n(s) - g(s)\right| <\delta ,\forall s \in [0,r] \right)
			\label{omega estimate}
		\end{align}
		Here $B_n^{a_n}(\cdot)$ is defined in \Cref{MDP for RW}, and according to \Cref{MDP for RW}, we know
		\begin{equation*}
			\lim\limits_{\delta\rightarrow0}
			\limsup\limits_{n\rightarrow\infty}\dfrac{1}{a_n}
			\log P_n \left( \left|B^{a_n}_n(s) - g(s)\right| <\delta ,\forall s \in [0,r] \right)
			\leqslant -\int_{0}^{r} \dfrac{1}{2}g^{\prime}(s)^2 \d s.
		\end{equation*}
		
		Together with \cref{omega estimate} and \Cref{A2}, then
		\begin{align*}
			\lim\limits_{\delta\rightarrow 0}
			\limsup\limits_{n\rightarrow\infty} 
			\dfrac{1}{a_n} \log P_n 
			\left( \exists \nu\in N_n^{n} : 
			V_{\nu}^{n,a_n} \in  \mathbb{B}_\delta(g) \right)
			\leqslant  -\left(\int_{0}^{r} \dfrac{1}{2}g^{\prime}(s)^2 \d s- f(r)\right).
		\end{align*}
		
		Due to the arbitrary of $r\in [0,1]$, 
		the proof of \Cref{local upper bound} is over.
	\end{proof}

	\subsection{Local lower bound}\label{Sec2.4}
	
	The following theorem describe the lower bound of \Cref{Moderate Deviation}.
	
	\
	
	\begin{theorem}\label{local lower bound}
		For $g\in C_0([0,1])$,
		\begin{equation*}
			\liminf\limits_{n\rightarrow\infty} 
			\dfrac{1}{a_n} \log P_n\left( \exists \nu\in N_n^{n} : 
			V_{\nu}^{n,a_n} \in  \mathbb{B}_\delta(g) \right)
			\geqslant -S_f(g).
		\end{equation*}
	\end{theorem}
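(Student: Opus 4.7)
My plan is as follows. I may assume $g\in H_1$ with $S_f(g)<\infty$, since otherwise the inequality is vacuous. Because $r\mapsto I_r(g)-f(r)$ is continuous on $[0,1]$, a maximizer $r^*\in[0,1]$ exists with $S_f(g)=I_{r^*}(g)-f(r^*)$; I take $r^*$ to be the largest such. Fix $\epsilon>0$. The strategy is to split the target event at generation $\lfloor nr^*\rfloor$: (i) produce at least one particle there whose rescaled path lies in $\mathbb{B}_{\delta/2}(g)$ on $[0,r^*]$, with probability at least $\exp(-a_n(S_f(g)+\epsilon))$; and (ii) show that, conditionally on such a particle, its subtree produces a descendant at generation $n$ whose path lies in $\mathbb{B}_\delta(g)$ on $[r^*,1]$, with probability bounded away from $0$. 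The boundary cases $r^*=0$ (where $S_f(g)=0$, so the bound is trivial) and $r^*=1$ (where step (ii) is empty) are handled by the same argument with obvious simplifications.

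For step (i), let $N_{r^*}:=\#\{\nu\in N^n_{\lfloor nr^*\rfloor}:V_\nu^{n,a_n}(s)\in(g(s)-\delta/2,g(s)+\delta/2),\,\forall s\in[0,r^*]\}$. The many-to-one identity used in the proof of \Cref{local upper bound}, together with \Cref{A2} and the LDP lower bound of \Cref{MDP for RW} restricted to $[0,r^*]$ (via a contraction argument), gives $E_n N_{r^*}\geq \exp(-a_n(S_f(g)+\epsilon/2))$ for all large $n$. I would then apply Paley--Zygmund, $P_n(N_{r^*}\geq 1)\geq (E_n N_{r^*})^2/E_n N_{r^*}^2$. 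Expanding $E_n N_{r^*}^2$ by the generation $j+1$ of the most recent common ancestor of two sampled particles produces a sum whose generic term is comparable to $\tilde F^n_{j+1}\exp(a_n(I_{j/n}(g)-f(j/n)))$; using $I_{j/n}(g)-f(j/n)\leq S_f(g)$ for every $j$, the whole sum is bounded by $\exp(a_n S_f(g))\sum_{j}\tilde F^n_{j+1}$, which by \cref{Techanical assumption} does not exceed $\exp(a_n(S_f(g)+\epsilon/4))$. This yields $P_n(N_{r^*}\geq 1)\geq \exp(-a_n(S_f(g)+\epsilon))$ for all large $n$.

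For step (ii), conditional on a particle $\nu^*\in N^n_{\lfloor nr^*\rfloor}$ at position $g(r^*)\sqrt{na_n}\,(1+o(1))$, its subtree is a time-inhomogeneous BRW driven by $\{F^n_k\}_{k>\lfloor nr^*\rfloor}$. Let $M$ count its descendants at generation $n$ with rescaled path in $\mathbb{B}_\delta(g)$ on $[r^*,1]$. Many-to-one gives $E_n[M\mid\nu^*]$ of order $\exp(a_n(f(1)-f(r^*)-(I_1(g)-I_{r^*}(g))))$, which is at least of constant order by the maximizer property $I_1(g)-I_{r^*}(g)\leq f(1)-f(r^*)$; choosing $r^*$ to be the largest maximizer makes this quantity tend to $+\infty$ exponentially whenever $r^*<1$. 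A matching second-moment bound (again driven by \cref{Techanical assumption}) then gives $P_n(M\geq 1\mid\nu^*)\geq c>0$ uniformly in $n$. Multiplying the two probabilities yields $P_n(\exists\nu\in N^n_n:V^{n,a_n}_\nu\in\mathbb{B}_\delta(g))\geq c\,\exp(-a_n(S_f(g)+\epsilon))$; taking $\liminf\frac{1}{a_n}\log$ and letting $\epsilon\to 0$ completes the proof. The main obstacle will be the two second-moment estimates, both of which rely on \cref{Techanical assumption} to control $\sum_k\tilde F^n_k$ at subexponential rate in $a_n$; a minor subtlety is deriving the restricted LDP on $C_0([0,r^*])$ from \Cref{MDP for RW}, which follows by a routine contraction argument.
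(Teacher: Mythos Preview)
Your approach is genuinely different from the paper's and is in principle workable, so let me first say what the paper does and then flag two points in your outline that need repair.

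\textbf{Comparison.} The paper does not use a second--moment/Paley--Zygmund argument at all. Instead it builds the additive martingale
\[
W_k^{n}=\sum_{\nu\in N_k^{n}}e^{-S_k^n}\exp\Bigl(\sum_{i=1}^{k}\lambda_i^n\bigl(V^n_\nu(i)-V^n_\nu(i-1)\bigr)-\tfrac12(\lambda_i^n)^2\Bigr),
\qquad \lambda_i^n=g^{n,a_n}(i)-g^{n,a_n}(i-1),
\]
tilts to the spine measure $Q_n$ with $\frac{dQ_n}{dP_n}|_{\mathcal F_k^n}=W_k^n$, and then shows via the spine decomposition that $Q_n\bigl(\sup_k W_k^n\le e^{a_n(S_f(g)+\epsilon)}\bigr)\to 1$. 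The lower bound follows from $P_n(A)=Q_n[1/W_n^n;A]$ together with the fact that under $Q_n$ the spine path automatically lies in $\mathbb B_\delta(g)$ with probability tending to $1$. No splitting at a maximiser $r^*$ is needed; the spine decomposition already sums over the generation of branching and produces exactly the factor $1+\sum_k\overline F_k^n\tilde F_k^n$, controlled by \eqref{Techanical assumption}. Your approach trades this change of measure for a direct variance computation; that is more elementary but forces you to control, uniformly in $j$, the tube probabilities of the random walk on $[j/n,1]$, which the spine argument never has to isolate.

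\textbf{Two gaps.} First, your step (ii) overclaims: with $r^*$ the largest maximiser you do get $E_n[M\mid\nu^*]\to\infty$, but the matching second--moment bound only yields $P_n(M\ge 1\mid\nu^*)\ge e^{-o(a_n)}$, not a fixed $c>0$. Indeed the second--moment sum for the subtree is governed by $\sup_{s\in(r^*,1]}\bigl(I_s(g)-f(s)\bigr)-\bigl(I_1(g)-f(1)\bigr)$, and by continuity at $s\downarrow r^*$ this supremum equals $S_f(g)-(I_1(g)-f(1))>0$ whenever $r^*<1$, so the ratio $(E M)^2/E[M^2]$ is only $e^{-o(a_n)}$. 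This is still enough for the final inequality, so you should amend the statement rather than the strategy. Second, and more substantively, your second--moment upper bound in step (i) requires, uniformly in $0\le j\le\lfloor nr^*\rfloor$, an estimate of the form $\sup_{x}r_j(x)\le \exp\bigl(-a_n(I_{r^*}(g)-I_{j/n}(g))+o(a_n)\bigr)$ for the conditional tube probability on $[j/n,r^*]$; the pointwise LDP of \Cref{MDP for RW} does not give this uniformity directly, and a one--point (endpoint) Gaussian bound is too weak because by Cauchy--Schwarz it only recovers $\frac{(g(r^*)-g(j/n))^2}{2(r^*-j/n)}\le I_{r^*}(g)-I_{j/n}(g)$. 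For Gaussian steps you can close this via an explicit Girsanov/Cameron--Martin computation on the discrete walk (shifting by $g^{n,a_n}$ and controlling the cross term on the $\delta'$--tube), but you should say so and carry it out; this is precisely the work that the paper's spine decomposition absorbs automatically.
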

	
	In order to prove \Cref{local lower bound}, we will change the measure to obtain a realization of target event. And in branching system, it is a very mature approach to do such by additive martingale.
	
	Choose a function $g\in H_1$,
	then $\int_{0}^{1}\frac{1}{2}g^{\prime}(s)^2 \d s <\infty $.
	Consider the scale-up of $g(t)$, for each $0\leqslant k\leqslant n$, let
	\[
	g^{n,a_n}(k) :=  \sqrt{na_n} g\left(\frac{k}{n}\right),
	\quad \text{and}
	\quad \lambda^n_i = g^{n,a_n}(i)-g^{n,a_n}(i-1), \quad 1\leqslant i\leqslant n.
	\]
	and define $W_0^n:=0$, and for $1\leqslant k\leqslant n$,
	\begin{align*}
		W_k^{n}:= \sum\limits_{\nu\in N_k^{n}}
		\exp(-S^n_k) \exp\left( \sum_{i=1}^{k} \lambda^n_i \left[X^n(\nu(i))-X^n(\nu(i-1))\right]-\frac{1}{2}(\lambda_i^n)^2 \right)
	\end{align*}
	Under $P_n$, the sequence $\left\{W_0^{n},W_1^{n},\cdots,W_n^{n}\right\}$ construct the so-called additive martingale. 
	Thus, we can construct a new measure $Q_n$ which satisfies 
	$\dfrac{\d Q_n}{\d P_n}\big|_{\mathcal{F}^n_k}:= W_k^{n}$, 
	here the filtration $\{\mathcal{F}^n_k:0\leqslant k\leqslant n\}$ 
	is the natural filtration of the $n$-th branching random walk. What's more, under $Q_n$, the branching system is a branching process with the spine and can be described as follow:
	\begin{itemize}
		\item Initially, single particle stays at $0$, 
		and this particle is the 0-generation spine particle,
		denote it as $\omega_0^n$.
		\item At generation $k$, the spine particle $\omega^n_{k-1}$ dies, 
		and gives birth to children according to 
		the law $\{q^n_k[s]:=\dfrac{sF^n_k[s]}{\overline{F}^n_k};s=1,2,\cdots\}$ 
		(the size-biased distribution of $F_k^n$),
		then uniformly choose one as the $k$-generation spine particle, 
		denote it as $\omega^n_k$, other particles are normal.
		Besides the spine particle $\omega^n_{k-1}$, 
		other normal particles die and reproduce just 
		according to $F^n_k$ at generation $k$, 
		and all these reproduced particles are normal.
		\item The spine particle $\omega_k^n$ move 
		according to $\ND(\lambda_k^n,1)$ respect to its parent $\omega^n_{k-1}$; all normal particles move according to $\ND(0,1)$ respect to their own parent.
	\end{itemize}
	
	\begin{remark}\label{remark 1}
		Under $Q_n$,
		define $\hat{V}^n(\omega_k^n):=V^n(\omega_k^n)-g^{n,a_n}(k)$,
		then $\{\hat{V}^n(\omega_0^n),\hat{V}^n(\omega_1^n),
		\cdots,\hat{V}^n(\omega_n^n)\} $ 
		is just a random walk with one step distribution $\ND(0,1)$.
	\end{remark}
	
	About $P_n$ and $Q_n$, we have
	\begin{align}\label{Des Prob using Q_n}
		P_n\left(  \exists \nu\in N_n^{n} : 
		V_{\nu}^{n,a_n} \in  \mathbb{B}_\delta(g)  \right)
		=Q_n\left( \dfrac{1}{W_n^{n}}; 
		\exists \nu\in N_n^{n} : V_{\nu}^{n,a_n} \in  \mathbb{B}_\delta(g)  
		\right)
	\end{align}
	
	Now we calculate  the $\alpha$-moment of $W_n^n$ under $Q_n$.
	\begin{lemma}\label{M of M_n under Q}
		Under \Cref{A2}, 
		for any $\alpha\in [0,1]$, and for any $\delta>0$, 
		there exist $N(\delta)$ such that for all $n>N(\delta)$,
		\begin{align}\label{MF of Mn under Q}
			Q_n\left( (W_n^{n})^\alpha \right)
			\leqslant
			\exp(a_n\delta)\exp\left(\alpha a_nS_f(g)\right)
			\exp\left( \frac{1}{2}\alpha^2a_n\int_{0}^1g^{\prime}(s)^2 \d s \right)
			\left(1+\sum_{k=1}^n\overline{F}^n_k\tilde{F}^n_k \right)
		\end{align}
	\end{lemma}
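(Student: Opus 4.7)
The plan is to use the spine decomposition of $W_n^n$ under the size-biased measure $Q_n$ together with the subadditivity $(\sum_i a_i)^\alpha \leq \sum_i a_i^\alpha$ valid for $\alpha \in [0,1]$. Grouping particles in $N_n^n$ by the last generation at which their ancestor coincides with the spine yields
\begin{equation*}
W_n^n \;=\; \zeta(\omega_n^n, n) \;+\; \sum_{k=1}^n \sum_{\nu' \in \mathrm{NSC}_k} \zeta(\nu', k)\,\tilde W^{(\nu')},
\end{equation*}
where $\zeta(\nu, k)$ denotes the usual summand defining $W_k^n$, $\mathrm{NSC}_k$ is the set of non-spine children of $\omega_{k-1}^n$ born at time $k$, and $\tilde W^{(\nu')}$ is the mean-one subtree martingale attached to each $\nu'$. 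Taking $\alpha$-powers, applying subadditivity, and using Jensen's inequality on each independent subtree---which gives $Q_n[(\tilde W^{(\nu')})^\alpha \mid \text{spine}] \leq 1$---reduces the task to bounding $Q_n[\zeta(\omega_n^n, n)^\alpha]$ and $Q_n\bigl[\sum_{\nu' \in \mathrm{NSC}_k} \zeta(\nu', k)^\alpha\bigr]$ for each $k$.

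Both of these are explicit Gaussian computations: by \Cref{remark 1} the spine increments are independent $\ND(\lambda_i^n, 1)$ under $Q_n$, while the displacement of a non-spine sibling from its parent is an independent $\ND(0,1)$ variable; meanwhile, size-biasing gives $Q_n[\xi_k - 1] = \overline F_k^n\,\tilde F_k^n$ directly from the definitions in \eqref{mean and stand-var}. A direct moment-generating-function calculation yields, after discarding the factor $\exp(\tfrac{\alpha^2 - \alpha}{2}(\lambda_k^n)^2) \leq 1$, an exponent of the form
\begin{equation*}
-\alpha S_k^n \;+\; \tfrac{\alpha + \alpha^2}{2} \sum_{i=1}^{k-1}(\lambda_i^n)^2
\end{equation*}
for the level-$k$ term, and the analogous expression with $k = n$ for the spine contribution.

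The key step is then to pass to the continuum limit. Since $\lambda_i^n = \sqrt{na_n}\bigl(g(i/n) - g((i-1)/n)\bigr)$ and $g \in H_1$, the partial Riemann sums satisfy $a_n^{-1} \sum_{i=1}^{\lfloor nr \rfloor}(\lambda_i^n)^2 \to \int_0^r g'(s)^2\,ds$ uniformly in $r \in [0,1]$; combined with the uniform convergence $a_n^{-1} S_{\lfloor nr \rfloor}^n \to f(r)$ from \Cref{Remark of Convergence}, each exponent is bounded above by
\begin{equation*}
\alpha a_n \Bigl(\tfrac{1}{2}\!\int_0^r g'(s)^2\,ds - f(r)\Bigr) \;+\; \tfrac{\alpha^2}{2} a_n\!\int_0^1 g'(s)^2\,ds \;+\; o(a_n),
\end{equation*}
evaluated at $r = k/n$ (or $r = 1$ for the spine). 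The supremum of the bracketed expression over $r \in [0,1]$ is by definition $\alpha S_f(g)$; picking $N(\delta)$ large enough absorbs the $o(a_n)$ error into $e^{a_n \delta}$. Summing the $n$ level contributions produces the factor $\sum_{k=1}^n \overline F_k^n \tilde F_k^n$, and the spine term supplies the extra $1$ in $(1 + \sum_k \overline F_k^n \tilde F_k^n)$.

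The principal obstacle is securing the uniform Riemann-sum convergence $a_n^{-1}\sum_{i=1}^{\lfloor nr\rfloor}(\lambda_i^n)^2 \to \int_0^r g'(s)^2\,ds$ when $g'$ is only square-integrable, which I would handle by smooth approximation of $g$ together with the monotonicity of $r \mapsto \int_0^r g'(s)^2\,ds$. The remaining ingredients---the spine decomposition, the size-biased identity, and the Gaussian moment computation---are routine consequences of the martingale change of measure $dQ_n/dP_n = W_k^n$ described just before the lemma.
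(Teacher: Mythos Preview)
Your approach is essentially the paper's---spine decomposition, subadditivity of $x\mapsto x^\alpha$, Gaussian moment generating functions---and is correct. The one substantive difference is the step you flag as the ``principal obstacle'': you aim for the \emph{convergence}
\[
\dfrac{1}{a_n}\sum_{i=1}^{\lfloor nr\rfloor}(\lambda_i^n)^2 \;\longrightarrow\; \int_0^{r} g'(s)^2\,\d s,
\]
whereas the paper observes that only the one-sided inequality is required, and this holds for every $n$ and every $k$ directly from Cauchy--Schwarz applied to $\lambda_i^n=\sqrt{na_n}\int_{(i-1)/n}^{i/n}g'(s)\,\d s$:
\[
\sum_{i=1}^{k}(\lambda_i^n)^2 \;=\; na_n\sum_{i=1}^{k}\Bigl(\int_{(i-1)/n}^{i/n}g'(s)\,\d s\Bigr)^2 \;\leqslant\; a_n\int_0^{k/n}g'(s)^2\,\d s.
\]
This eliminates the smooth-approximation argument entirely; the only limit actually needed is $a_n^{-1}S_{\lfloor nr\rfloor}^n\to f(r)$, supplied by \Cref{Remark of Convergence}. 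A further minor difference is the order of operations: the paper first takes the conditional expectation $Q_n(W_n^n\mid\mathcal{G}_\infty)$ and then applies Jensen followed by subadditivity to that conditional expectation, while you apply subadditivity to $W_n^n$ itself and then Jensen to each subtree martingale; the two routes are equivalent and lead to the same bound.
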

	
	\begin{proof}[Proof of \Cref{M of M_n under Q}]
		Let the filtration $\mathcal{G}_\infty$ contains
		all information about the spine, and therefore we can obtain
		the spine decomposition:
		\begin{align}
			&\Q\left( W_n^{n} |\mathcal{G}_\infty\right)
			=
			\exp\left( -S^n_n \right)
			\exp\left(\sum_{i=1}^n\lambda^n_i
			\left(V^n(\omega^n_i)-V^n(\omega^n_{i-1})\right)-\frac{1}{2}(\lambda_i^n)^2\right)
			\nonumber\\
			+&
			\sum_{k=1}^{n} \exp\left(-S_{k}^n\right)
			\left(N(\omega_{k-1}^n)-1\right)
			\exp\left(\sum_{i=1}^{k-1}\lambda_i^n
			\left(V^n(\omega^n_i)-V^n(\omega^n_{i-1})\right)-\frac{1}{2}(\lambda_i^n)^2\right)
			\label{eq10}
		\end{align}
		Here $ N(\omega_{k-1}^n) $ represent the number of children 
		that $\omega_{k-1}^n$ has.
		Recall the following well-known facts.
		
		\begin{proposition}\label{Prop}
			If $\alpha\in [0,1]$ and $u,v>0$
			then $(u+v)^\alpha \leqslant u^\alpha+v^\alpha$.
		\end{proposition}
		
		Note that $Q_n\left[ N(\omega_{k-1}^n)-1 \right] = \overline{F}^n_k\tilde{F}^n_k$, by Jeason's inequality, \cref{eq10}, \Cref{remark 1} and \Cref{Prop}, we have
		\begin{align}
			&~\quad Q_n\left(\left(W_n^{n}\right)^\alpha\right)
			=
			Q_n\left( Q_n\left( (W_n^{n})^\alpha\left|\right.
			\mathcal{G}_\infty \right) \right)
			\leqslant
			Q_n\left( Q_n\left( W_n^{n}|\mathcal{G}_\infty \right)^\alpha \right)
			\nonumber\\
			&\leqslant
			Q_n\left(  \exp\left( -\alpha S^n_n \right)
			\exp\left(
			\sum_{i=1}^n\alpha\lambda^n_i(\hat{V}^n(\omega_i^n)-\hat{V}^n(\omega^n_{i-1}))
			+\frac{1}{2}\alpha(\lambda_i^n)^2\right)  \right)
			\nonumber\\
			&+
			\sum_{k=1}^{n}
			Q_n\left( \exp(-\alpha S_{k}^n)
			\left(N(\omega_{k-1}^n)-1\right)
			\exp
			\left(\sum_{i=1}^{k-1}\alpha\lambda^n_i(\hat{V}^n(\xi_i)-\hat{V}^n(\xi_{i-1}))
			+\frac{1}{2}\alpha(\lambda_i^n)^2\right) \right)
			\nonumber\\
			&=
			\exp\left(-\alpha S_n^n\right)
			\exp\left(\dfrac{\alpha^2+\alpha}{2}\sum_{i=1}^n (\lambda_i^n)^2\right)
			+\sum_{k=1}^{n}\exp(-\alpha S^n_k)\overline{F}^n_k\tilde{F}^n_k
			\exp\left( \dfrac{\alpha^2+\alpha}{2}\sum_{i=1}^{k-1} (\lambda_i^n)^2 \right)
			\label{eq11}
		\end{align}
		
		According to \Cref{Remark of Convergence},
		for any $\delta>0$, there exist $N$
		such that for any $n>N$,$ \left|\dfrac{1}{a_n}S^n_k-f(\dfrac{k}{n})\right| < \dfrac{\delta}{\alpha} $, then
		\begin{align}\label{eq12}
			\alpha S^n_k\geqslant \alpha a_nf(\frac{k}{n})-a_n\delta 
		\end{align}
		What's more, for any $0\leqslant k \leqslant n$, 
		use Cauchy-Schwarz Inequality,
		\begin{align}\label{eq13}
			\sum_{i=1}^k (\lambda_i^n)^2 =
			na_n\sum_{i=1}^k
			\left( g(\dfrac{i}{n})-g(\dfrac{i-1}{n}) \right)^2
			\leqslant a_n\int_{0}^{\frac{k}{n}} g^{\prime}(s)^2 \d s.
		\end{align}
		Then according to \cref{eq11,eq12,eq13} and the definition of $S_f(g)$,
		\begin{align}
			Q_n\left((W_n^{n})^\alpha\right)
			&\leqslant
			\exp\left(a_n\delta-\alpha a_n f(1)\right)
			\exp\left(\dfrac{\alpha^2+\alpha}{2} a_n \int_{0}^{1} g^{\prime}(s)^2 \d s \right)
			\nonumber\\
			&+
			\sum_{k=1}^{n}
			\exp\left(a_n\delta-\alpha a_n f(\dfrac{k-1}{n})\right)
			\overline{F}^n_k\tilde{F}^n_k
			\exp
			\left( \dfrac{\alpha^2+\alpha}{2}
			a_n\int_{0}^{\frac{k-1}{n}} g^{\prime}(s)^2 \d s \right)
			\nonumber\\
			&\leqslant
			\exp(a_n\delta)\exp\left(\alpha a_n S_f(g)\right)
			\exp\left(\alpha^2 a_n\int_{0}^{1} \frac{1}{2}g^{\prime}(s)^2\d s\right)
			\left(1+\sum_{k=1}^{n}\overline{F}^n_k\tilde{F}^n_k\right)\nonumber
		\end{align}
		Thus, the proof is over.
	\end{proof}

	Using \Cref{M of M_n under Q}, we have
	\begin{theorem}\label{Theorem M_n under Q}
		Under \Cref{A2},
		for any $\epsilon>0$,
		\begin{align*}
			\lim\limits_{n\rightarrow\infty}
			Q_n\left(\sup\limits_{0\leqslant k\leqslant n} W_k^n 
			\leqslant \exp\left(a_n \left(S_f(g)+\epsilon\right)\right)\right) = 1.
		\end{align*}
	\end{theorem}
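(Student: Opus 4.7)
The plan is to combine the moment estimate of Lemma \ref{M of M_n under Q} with Doob's maximal inequality, after recognizing a suitable concave power of $W_k^n$ as a non-negative $Q_n$-submartingale.

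First I will verify that $\left((W_k^n)^\alpha\right)_{0 \leqslant k \leqslant n}$ is a $Q_n$-submartingale for every $\alpha \in (0,1]$. Since $W$ is a non-negative $P_n$-martingale with $W_0^n = 1$ and $\d Q_n/\d P_n\big|_{\mathcal{F}_k^n} = W_k^n$, for any $A \in \mathcal{F}_k^n$,
\begin{align*}
Q_n\!\left((W_{k+1}^n)^\alpha \ind{A}\right) = E_n\!\left((W_{k+1}^n)^{1+\alpha} \ind{A}\right) \geqslant E_n\!\left((W_k^n)^{1+\alpha} \ind{A}\right) = Q_n\!\left((W_k^n)^\alpha \ind{A}\right),
\end{align*}
where the inequality is the conditional Jensen inequality for the convex function $x \mapsto x^{1+\alpha}$ together with the $P_n$-martingale property of $W$.

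Setting $K := \exp\left(a_n(S_f(g) + \epsilon)\right)$ and applying Doob's maximal inequality to the non-negative submartingale $(W_k^n)^\alpha$ then gives
\begin{align*}
Q_n\!\left(\sup_{0 \leqslant k \leqslant n} W_k^n > K\right) = Q_n\!\left(\sup_{0 \leqslant k \leqslant n} (W_k^n)^\alpha > K^\alpha\right) \leqslant K^{-\alpha} Q_n\!\left((W_n^n)^\alpha\right).
\end{align*}
If $g \notin H_1$ then $S_f(g) = +\infty$ and the statement is vacuous, so I may assume $g \in H_1$ and set $I := \int_0^1 g'(s)^2 \, \d s < \infty$. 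Inserting the bound from Lemma \ref{M of M_n under Q}, the right-hand side is at most
\begin{align*}
\exp\!\left(a_n\delta - \alpha a_n \epsilon + \tfrac{1}{2}\alpha^2 a_n I\right) \cdot \left(1 + \sum_{k=1}^n \overline{F}^n_k \tilde{F}^n_k\right).
\end{align*}

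To conclude I will pick $\alpha \in (0,1]$ so small that $\tfrac{1}{2}\alpha I < \epsilon/4$, and then choose $\delta < \alpha \epsilon/4$ in the application of Lemma \ref{M of M_n under Q}; the exponent on $a_n$ is then bounded above by $-\tfrac{1}{2}\alpha a_n \epsilon$. By \cref{Techanical assumption} in Assumption \ref{A2} the correction factor $1 + \sum_k \overline{F}^n_k \tilde{F}^n_k$ grows sub-exponentially on the scale $a_n$, so the whole expression tends to $0$ as $n \to \infty$, which is equivalent to the claim. The main technical obstacle is the three-way balancing between $\alpha$, $\delta$ and $\epsilon$: $\alpha$ has to be small enough to dominate the quadratic overhead $\tfrac{1}{2}\alpha^2 I$, yet large enough that the linear-in-$a_n$ gain $\alpha a_n \epsilon$ still absorbs the sub-exponential contribution coming from the size-biased offspring of the spine.
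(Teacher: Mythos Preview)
Your proof is correct and follows essentially the same route as the paper: both recognize $(W_k^n)^\alpha$ as a $Q_n$-submartingale via the $P_n$-submartingale $(W_k^n)^{1+\alpha}$, apply Doob's maximal inequality, invoke Lemma~\ref{M of M_n under Q}, and then balance $\alpha$ against $\delta$ so that the exponent on $a_n$ is strictly negative. The only point you leave implicit is that \cref{Techanical assumption} controls $\sum_k \tilde{F}_k^n$ rather than $\sum_k \overline{F}_k^n\tilde{F}_k^n$; the paper closes this gap by observing that $\log\max_{1\leqslant k\leqslant n}\overline{F}_k^n = o(a_n)$, which follows from the uniform convergence $S_{\lfloor n\cdot\rfloor}^n/a_n \to f$ and the continuity of $f$.
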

	
	\begin{proof}[Proof of \Cref{Theorem M_n under Q}]
		For each $\alpha\in (0,1)$, $(W_k^n)^{\alpha}$ is a $Q_n$ submartingale (indeed, $(W_k^n)^{1+\alpha}$ is $P_n$ submartingale), then according to Doob's submartingale inequality,
		\begin{align}
			Q_n\left(\max\limits_{0\leqslant k\leqslant n} W_k^{n} > \exp\left(a_n(S_f(g)+\epsilon)\right) \right)
			&=
			Q_n\left( \max\limits_{0\leqslant k\leqslant n} (W_k^{n})^\alpha >
			\exp\left( a_n\alpha(S_f(g)+\epsilon)\right) \right)
			\nonumber\\
			&\leqslant
			\dfrac{Q_n\left( (W_n^{n})^\alpha \right)}
			{\exp\left(a_n\alpha(S_f(g)+\epsilon)\right)}\label{eq14}
		\end{align}
		
		According to \Cref{M of M_n under Q}, together with \cref{eq14},
		\begin{align}
			Q_n&\left(\max\limits_{0\leqslant k\leqslant n} W_k^{n} > \exp\left(a_n(S_f(g)+\epsilon)\right) \right)
			\nonumber\\
			&\leqslant
			\exp(a_n\delta-a_n\epsilon)\exp\left( \frac{1}{2}\alpha^2a_n\int_{0}^1g^{\prime}(s)^2 \d s \right) \left(1+\sum_{k=1}^n\overline{F}^n_k\tilde{F}^n_k\right)\label{eq15}
		\end{align}
		
		According to \Cref{A2}, due to the continuity of $f$, we know $\log \left(\max\limits_{1\leqslant k\leqslant n}\{\overline{F}^n_k\}\right) = o(a_n)$, thus
		\begin{align*}
			\log\left(1+\sum_{k=1}^n\overline{F}^n_k\tilde{F}^n_k\right)
			\leqslant
			\log\left(1+\sum_{k=1}^n\tilde{F}^n_k\right)+\log \left(\max\limits_{1\leqslant k\leqslant n}\{\overline{F}^n_k\}\right) = o(a_n).
		\end{align*}
		Now choose $\alpha$ small enough, 
		such that $\alpha \int_{0}^1 \frac{1}{2}g^{\prime}(s)^2\d s -\epsilon <0 $, then fix some $ \delta $ small enough, which satisfies $\delta < \alpha \left( \epsilon- \alpha\int_{0}^1\frac{1}{2}g^{\prime}(s)\d s \right)$. In \cref{eq15}, take $n\rightarrow\infty$, then
		\begin{equation*}
			\lim\limits_{n\rightarrow\infty}
			Q_n\left(\max\limits_{0\leqslant k\leqslant n} W_k^{n} 
			> \exp\left(a_n(S_f(g)+\epsilon)\right) \right)
			= 0.
		\end{equation*}
		The proof is finished.
	\end{proof}
	
	\
	
	Now, we are going to prove \Cref{local lower bound}:
	\begin{proof}[Proof of \Cref{local lower bound}]
		According to \cref{Des Prob using Q_n},
		\begin{align}
			&P_n\left(\exists \nu\in N_n^{n} : V_{\nu}^{n,a_n} \in  \mathbb{B}_\delta(g)  \right)
			=Q_n\left( \dfrac{1}{W_n^{n}}; \exists \nu\in N_n^{n} : V_{\nu}^{n,a_n} \in  \mathbb{B}_\delta(g)  \right)
			\nonumber\\
			\geqslant&
			\exp(-a_n(S_f(g)+\epsilon)) Q_n\left( \max\limits_{0\leqslant k\leqslant n} W_k^{n} \leqslant\exp\left(a_n\left(S_f(g)+\epsilon\right)\right);\right.
			\nonumber\\
			&\quad \bigg. \quad\quad\quad\quad\quad\quad\quad\quad\quad\quad\quad\quad\quad
			\exists \nu\in N_n^{n}, ~ V_{\nu}^{n,a_n} \in  \mathbb{B}_\delta(g) \bigg)
			\nonumber\\
			\geqslant&
			\exp(-a_n(S_f(g)+\epsilon)) Q_n\left( \max\limits_{0\leqslant k\leqslant n} W_k^{n} \leqslant\exp\left(a_n\left(S_f(g)+\epsilon\right)\right);\right.
			\nonumber\\
			&\quad \quad\quad\quad\quad\quad\quad\quad\quad\quad\quad\quad\quad\quad \left. \omega_n^n\in N_n^{n}, ~ V_{\omega_n^n}^{n,a_n} \in \mathbb{B}_\delta(g) \right).\label{eq16}
		\end{align}
		
		According to \Cref{remark 1} and the law of large number, we know
		\begin{equation*}
			\lim\limits_{n\rightarrow\infty}
			Q_n\left( \omega_n^n\in N_n^{n}, ~ V_{\omega_n^n}^{n,a_n} \in \mathbb{B}_\delta(g) \right) = 1.
		\end{equation*}
		Together with \Cref{Theorem M_n under Q} and \cref{eq16},
		\begin{equation*}
			\liminf\limits_{n\rightarrow\infty} 
			\dfrac{1}{a_n} \log P_n\left( \exists \nu\in N_n^{n} : 
			V_{\nu}^{n,a_n} \in  \mathbb{B}_\delta(g)   \right)
			\geqslant - S_f(g)-\epsilon.
		\end{equation*}
		
		Due to the arbitrary of $\epsilon$, 
		the proof of \Cref{local lower bound} is over.
	\end{proof}

	\subsection{Improving the ``weak'' deviation result}\label{Sec2.5}
	
	In this subsection, we will prove \Cref{Moderate Deviation}. As the same structure in \cite{Harris2006}, repeat the Section 7 in \cite{Harris2006}, we can prove the weak deviation results, i.e. the upper bound in \Cref{Moderate Deviation} holds for all compact subset.
	\begin{theorem}\label{weak DP}
		The results in \Cref{local upper bound} and \Cref{local lower bound} means that the upper bound of \Cref{Moderate Deviation} hold for all $C\in C_0([0,1])$ that are closed and compact:
		\begin{equation*}
			\limsup\limits_{n\rightarrow\infty} \dfrac{1}{a_n} \log 
			P_n \left( \exists \nu\in N_n^{n} : V_{\nu}^{n,a_n} \in C  \right)
			\leqslant -\inf\limits_{g\in C} S_f(g). 
		\end{equation*}
		whilst the lower bound holds in full for all open subsets $G\in C_0([0,1])$:
		\begin{equation*}
			\liminf\limits_{n\rightarrow\infty} \dfrac{1}{a_n} \log 
			P_n \left( \exists \nu\in N_n^{n} : V_{\nu}^{n,a_n}  \in G  \right)
			\geqslant -\inf\limits_{g\in G} S_f(g). 
		\end{equation*}
	\end{theorem}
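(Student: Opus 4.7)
The plan is to upgrade the ball-wise bounds from \Cref{local upper bound} and \Cref{local lower bound} to the set-level bounds asserted here, following the classical framework of Dembo and Zeitouni (see Sections 1.2 and 4.1 of \cite{LDP1998}) and mimicking the structure of Section 7 of \cite{Harris2006}. The key ingredients are the goodness of $S_f$ (already established in \Cref{Good rate function}) together with a routine compact-cover/union-bound argument.

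For the lower bound on an open $G \subseteq C_0([0,1])$, fix any $g \in G$; if $S_f(g) = +\infty$ the asserted inequality is vacuous, so assume $S_f(g) < \infty$. Since $G$ is open there exists $\delta > 0$ with $\mathbb{B}_\delta(g) \subseteq G$, and monotonicity combined with \Cref{local lower bound} gives
$$\liminf_{n\to\infty} \frac{1}{a_n}\log P_n\bigl(\exists \nu \in N_n^n : V_\nu^{n,a_n} \in G\bigr) \geq \liminf_{n\to\infty} \frac{1}{a_n}\log P_n\bigl(\exists \nu \in N_n^n : V_\nu^{n,a_n} \in \mathbb{B}_\delta(g)\bigr) \geq -S_f(g).$$
Taking the supremum over $g \in G$ yields the claim.

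For the upper bound on a compact $C \subseteq C_0([0,1])$, fix $\epsilon > 0$ and set $\alpha(g) := S_f(g) \wedge \epsilon^{-1}$. By \Cref{local upper bound}, for each $g \in C$ there is some $\delta_g > 0$ with
$$\limsup_{n\to\infty} \frac{1}{a_n}\log P_n\bigl(\exists \nu \in N_n^n : V_\nu^{n,a_n} \in \mathbb{B}_{\delta_g}(g)\bigr) \leq -\alpha(g) + \epsilon.$$
Compactness gives a finite subcover $\{\mathbb{B}_{\delta_{g_i}}(g_i)\}_{i=1}^N$ of $C$, and a union bound together with the elementary identity $\limsup_n (1/a_n)\log\sum_{i=1}^N p_n^{(i)} = \max_i \limsup_n (1/a_n)\log p_n^{(i)}$ yields
$$\limsup_{n\to\infty} \frac{1}{a_n}\log P_n\bigl(\exists \nu \in N_n^n : V_\nu^{n,a_n} \in C\bigr) \leq -\min_{1\leq i\leq N}\alpha(g_i) + \epsilon \leq -\inf_{g \in C}\alpha(g) + \epsilon.$$
Sending $\epsilon \downarrow 0$ recovers $-\inf_{g\in C} S_f(g)$ as required.

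The only mild friction is that \Cref{local upper bound} is phrased as a $\delta\to 0$ limit rather than a fixed-$\delta$ bound, so one must first select a single $\delta_g$ realising a near-optimal estimate; the truncation $\alpha(g) = S_f(g) \wedge \epsilon^{-1}$ is precisely what accommodates points with $S_f(g) = +\infty$. Once this mechanical step is performed, the covering and union-bound arguments are entirely standard and invoke only the good-rate-function property from \Cref{Good rate function}.
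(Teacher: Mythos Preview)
Your proposal is correct and is precisely the standard compact-cover/union-bound argument that the paper defers to by citing Section~7 of \cite{Harris2006}; the paper's own proof consists of nothing more than that reference. One small remark: the covering argument for the compact upper bound does not actually use the goodness of $S_f$ from \Cref{Good rate function} (goodness enters only later, to upgrade from compact to closed sets via exponential tightness), so that final sentence of yours slightly overstates what is being invoked.
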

	
	\begin{proof}[Proof of \Cref{weak DP}]
		There is no more different that need to be dealt with specifically.
		Following the Section 7 in \cite{Harris2006}, we can prove it directly.
	\end{proof}
	
	Now we are going to prove \Cref{Moderate Deviation}.
	
	\begin{proof}[Proof of \Cref{Moderate Deviation}]
		Due to the many-to-one formula, for any compact subset $K \subset C_0([0,1])$, we have
		\begin{align*}
			P_n\left( \exists \nu \in N_n^n: V_\nu^{n,a_n}\in K \right)
			&\leqslant
			E_n\left( \# \left\{ \nu \in N_n^n: V_\nu^{n,a_n}\in K\right\} \right)
			\\
			&=\exp(S_n^n) P_n\left( B^{a_n}_n(\cdot) \in K \right)
		\end{align*}
		Here $B^{a_n}_n(\cdot)$ is defined in \Cref{MDP for RW}.
		Then
		\begin{align*}
			\limsup\limits_{n\rightarrow\infty}
			\dfrac{1}{a_n}\log P_n\left( \exists \nu \in N_n^n: V_\nu^{n,a_n}\in K \right)
			\leqslant
			f(1)
			+ \limsup\limits_{n\rightarrow\infty}
			\dfrac{1}{a_n}\log
			P_n\left( B^{a_n}_n(\cdot) \in K \right)
		\end{align*}
		Then, due to \Cref{MDP for RW}, \Cref{A2} ($f(1)<\infty$) and the goodness of $I_1(\cdot)$, we can obtain the the exponential tight property about the (sub-additive) measure $\left\{P_n\left( \exists \nu \in N_n^n: V_\nu^{n,a_n}\in \cdot \right) \right\}_{n\geqslant1}$.
		
		Using the \Cref{weak DP}, together with the exponential tight property, following the proof in Lemma 1.2.18 in Chapter 1 of  \cite{LDP1998}, we can prove \Cref{Moderate Deviation}.
	\end{proof}

	\subsection{The maximal displacement}\label{Sec2.6}
	
	In this subsection, we want to estimate the maximal displacement $M_n^n$  of the time-inhomogeneous branching random walk under $P_n$, and prove a limit theorem.
	
	For the continuous function $f$ in \Cref{A2},
	define
	\begin{equation}\label{representation A_f}
		A_f:= \sup \left\{ g(1):\int_{0}^{r} 
		\dfrac{1}{2} g^{\prime}(s)^2 \d s\leqslant f(r), 
		\forall r \in [0,1] \right\}
	\end{equation}
	For the $n$-th branching random walk, 
	the maximal displacement in time $n$, 
	i.e. $M_n^{n}:=\max \{ V_\nu; \nu\in N_n^{n}\} $:
	\begin{theorem}\label{Max Dis of TIBRW}
		Under \Cref{A2} and \Cref{A5}, 
		the maximal displacement of 
		the time-inhomogeneous branching random walk satisfies
		\begin{equation*}
			\dfrac{M_n^{n}}{\sqrt{na_n}} \Longrightarrow A_f, \quad \text{as} \quad n\rightarrow\infty.
		\end{equation*}
	\end{theorem}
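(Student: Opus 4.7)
Because $A_f$ is deterministic, it is enough to establish matching tail estimates: for every $\epsilon > 0$,
\[
\lim_{n\to\infty} P_n\bigl(M_n^n \geq (A_f+\epsilon)\sqrt{n a_n}\bigr) = 0 \quad\text{and}\quad \lim_{n\to\infty} P_n\bigl(M_n^n \geq (A_f-\epsilon)\sqrt{n a_n}\bigr) = 1.
\]

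\textbf{Upper bound.} I would bound the event $\{M_n^n \geq (A_f+\epsilon)\sqrt{n a_n}\}$ by $\{\exists \nu\in N_n^n : V_\nu^{n,a_n}\in C_\epsilon\}$ with the closed set $C_\epsilon := \{g \in C_0([0,1]): g(1)\geq A_f+\epsilon\}$, and apply the upper-bound half of \Cref{Moderate Deviation}. The only thing to check is $\inf_{g\in C_\epsilon} S_f(g) > 0$. Note first that $S_f(g)\geq 0$ always (take $r=0$), so $\{g : S_f(g)\leq 0\}$ is exactly the feasibility set $K_0$ of the variational problem \eqref{representation A_f}; by \Cref{Good rate function} it is compact. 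Continuity of $g\mapsto g(1)$ shows that $A_f$ is attained on $K_0$, so $K_0\cap C_\epsilon = \emptyset$. A lower-semicontinuity/compactness argument (take $g_k \in C_\epsilon$ with $S_f(g_k)\to\inf$, extract a convergent subsequence inside the compact level set $\{S_f\leq 1\}$, obtain a limit in $K_0\cap C_\epsilon$, contradiction) forces $\inf_{g\in C_\epsilon} S_f(g) > 0$, and \Cref{Moderate Deviation} then gives exponential decay.

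\textbf{Lower bound.} This is the delicate half, since $S_f \geq 0$ and the MDP lower bound alone only delivers probability $\geq e^{-o(a_n)}$, which need not tend to $1$. To amplify, I invoke \Cref{A5}. First pick $g\in H_1$ with $g(1)\geq A_f - \epsilon/4$ and $I_r(g)\leq f(r)$ for all $r$; replacing $g$ by $(1-\eta)g$ introduces a uniform slack $f(r)-I_r(g) \geq \eta(2-\eta) f(r)$ at the cost of $g(1)\geq A_f-\epsilon/2$. Fix a small $t_0 > 0$ and condition on $\mathcal{F}^n_{\lfloor n t_0\rfloor}$: by \Cref{A5} there are, with probability tending to $1$, at least $N_n := \exp(a_n(f(t_0)-\eta'))$ particles alive at time $\lfloor n t_0\rfloor$, and their subtrees are conditionally independent time-inhomogeneous BRWs driven by the tail environment $F^n_{\lfloor n t_0\rfloor+1},\ldots,F^n_n$, with effective $f$-function $\tilde f(s) := f(t_0+s) - f(t_0)$ on $[0,1-t_0]$. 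Applying the MDP lower bound to each subtree gives a per-attempt success probability (for producing a descendant at time $n$ whose rescaled final position is within $\delta$ of $g(1)$) of the form $e^{-a_n(r_{t_0} + o(1))}$ for some $r_{t_0}$; the $\eta$-slack ensures $r_{t_0} < f(t_0)$ once $t_0$ is small enough. Independence then yields
\[
P_n\bigl(\text{no subtree achieves }g(1) \,\big|\, \mathcal{F}^n_{\lfloor nt_0\rfloor}\bigr) \leq \bigl(1-e^{-a_n(r_{t_0}+o(1))}\bigr)^{N_n} \leq \exp\bigl(-e^{a_n(f(t_0)-\eta'-r_{t_0}-o(1))}\bigr) \longrightarrow 0.
\]

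\textbf{Main obstacle.} The technical heart is reconciling the subtree-level MDP (whose natural speed is $a_{n-\lfloor nt_0\rfloor}$) with the global speed $a_n$, controlling the starting positions of the time-$\lfloor nt_0\rfloor$ particles so that they can be absorbed into the MDP ball of size $\delta\sqrt{n a_n}$ (they are spread over a range that is $o(\sqrt{n a_n})$ since $a_n\to\infty$), and above all verifying the crucial inequality $r_{t_0} < f(t_0)$ for $t_0$ small --- it is this inequality that, combined with the exponentially large particle count furnished by \Cref{A5}, converts the constant-order probability from the MDP into a probability-one statement. Without \Cref{A5} the MDP can never by itself produce the sharp constant $A_f$, which is why A5 is built into the hypotheses alongside A2.
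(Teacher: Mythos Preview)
Your proof is correct and shares the paper's overall architecture: identical upper bound via the closed set $C_\epsilon$ and goodness of $S_f$; and for the lower bound, conditioning at a small time $t_0$ (the paper writes $h$), using \Cref{A5} to obtain exponentially many particles, applying the subtree MDP lower bound to get a per-particle success probability $\psi_n=e^{-a_n(r_{t_0}+o(1))}$, and amplifying by independence. The genuine difference is how you manufacture the inequality $r_{t_0}<f(t_0)$. You scale a near-optimizer by $(1-\eta)$, which gives directly $r_{t_0}\le(1-\eta)^2\bigl(f(t_0)-I_{t_0}(g)\bigr)\le(1-\eta)^2 f(t_0)<f(t_0)$ for \emph{every} $t_0>0$ (your ``once $t_0$ is small enough'' is unnecessary here). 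The paper instead takes the exact optimizer $g$ and zeroes out $g'$ on an initial interval $[0,\phi(f(h))]$, achieving subtree rate exactly $0$; but this requires the separate and rather delicate \Cref{Q and phi} (that the optimizer satisfies $\int_0^y\frac12 g'(s)^2\,\d s>0$ for all $y>0$) to guarantee the truncation is small. Your scaling trick bypasses that lemma entirely and is cleaner. Two minor corrections to your obstacle list: the ``speed mismatch'' $a_{n-\lfloor nt_0\rfloor}$ versus $a_n$ is a non-issue, since the subtree MDP is just the global MDP restricted to $[t_0,1]$ at the \emph{same} speed $a_n$ (the paper simply says ``repeat the previous argument''); and your heuristic that the time-$\lfloor nt_0\rfloor$ positions are spread over $o(\sqrt{na_n})$ is not quite right --- with $\exp(a_n f(t_0))$ particles the extremes are of order $\sqrt{na_n}\cdot\sqrt{t_0 f(t_0)}$ --- but the constant vanishes as $t_0\to0$, and the paper makes this precise by a first-moment bound (choose $h$ with $f(h)<\epsilon^2/4$, then $P_n(\exists\,\gamma:V_\gamma^{n,a_n}(h)<-\epsilon)\le e^{S^n_{\lfloor nh\rfloor}}e^{-\epsilon^2 a_n/(2h)}\to0$).
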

	
	\begin{remark}
		For normal distribution $\ND(0,1)$, the condition $\lim\limits_{n\rightarrow\infty} \frac{a_n}{n} = 0$ about $\{a_n\}$ in \Cref{A2} can be omitted (see \Cref{Remark an}).
		In \cite{Hammersley}, Hammersley proved the almost surely limit for the maximal displacement at time $n$ of supercritical branching random walk by using the sub-additive ergodic theorem.
		If the jump distribution is $\ND(0,1)$, and take $ a_n = n$, then \Cref{Max Dis of TIBRW} is a kind of weak law of large number for supercritical branching random walk. In detail, assume the mean of offspring distribution is $m>1$,
		it is easy to calculate that the log-Laplacian transform of the offspring point process is $\Lambda(\lambda)= \log m+ \frac{1}{2}\lambda^2$, accounting to the law of large number in \cite{Hammersley},
		then $\dfrac{M_n}{n} \stackrel{a.s}{\longrightarrow} \inf\left\{ \frac{\Lambda(\lambda)}{\lambda}\right\} = \sqrt{2\log m}$.
		In the setting of \Cref{Max Dis of TIBRW}, take $a_n=n$ and $f(t)=t\log m$, it is not hard to verify the supercritical branching random walk does satisfy \Cref{A2} and \Cref{A5}, then solve \cref{representation A_f}, we know $A_f= \sqrt{2\log m}$, then according to \Cref{Max Dis of TIBRW}, $\dfrac{M_n}{n} \dcon \sqrt{2\log m}$, which also means the convergence in probability.
	\end{remark}
	
	\
	
	\begin{proof}[Proof of \Cref{Max Dis of TIBRW}]
		For any $\epsilon>0$,
		let $C:=\left\{g\in C_0([0,1]); g(1)\geqslant A_f+\epsilon\right\} $,
		and $C$ is a closed subset of $C_0([0,1])$.
		
		We claim that  $ \inf\limits_{g\in C}S_f(g)>0$. Otherwise, if  $\inf\limits_{g\in C}S_f(g) = 0$, 
		there exist a sequence function $\{g_n\}\subset C$ such that 
		$\lim\limits_{n\rightarrow\infty}S_f(g_n)= 0$.
		Due to $S_f$ is a good rate function, then there is some sub-sequence function $\{g_{n_k}\}$ that convergence to some $g$ uniformly and $S_f(g)=0$.
		Because $C$ is a closed set, $g\in C$, then $ g(1)\geqslant A_f+\epsilon $,	which contradicts the definition of $A_f$. 
		According to the deviation principle, we have
		\begin{align*}
			\limsup\limits_{n\rightarrow\infty}\dfrac{1}{a_n} \log P_n\left( \dfrac{M_n^n}{\sqrt{na_n}}\geqslant A_f+\epsilon \right)
			&=
			\limsup\limits_{n\rightarrow\infty}\dfrac{1}{a_n} 
			\log P_n\left( \exists \nu\in N_n^n; V_\nu^{n,a_n}\in C\right)
			\nonumber\\
			&\leqslant -\inf\limits_{g\in C} S_f(g)<0.
		\end{align*}
		Then
		\begin{equation}\label{Upper of dis convergence}
			\lim\limits_{n\rightarrow\infty}
			P_n\left(\dfrac{M_n^{n}}{\sqrt{na_n}} > A_f +\epsilon \right) = 0.
		\end{equation}
		
		\
		
		Next, for any $\epsilon >0$,  we want to prove
		\begin{equation}\label{Lower of dis convergence}
			\lim\limits_{n\rightarrow\infty}
			P_n\left(\dfrac{M_n^{n}}{\sqrt{na_n}} \geqslant A_f-4\epsilon \right) = 1.
		\end{equation}
		
		According to \Cref{A2}, there exist some $h_0\in (0,1)$, such that  $f(h)<\frac{\epsilon^2}{4}$ for any $h\in(0,h_0)$, 
		then for any $h\in (0,h_0)$,
		\begin{align}
			P_n\left( \exists \nu\in N_n^{n}, 
			V_\nu^{n,a_n}(h) \leqslant -\epsilon \right)
			&\leqslant
			P_n \left( \exists \nu\in N_{\abs{nh}}^{n}, 
			\dfrac{V^n_\nu}{\sqrt{na_n}} \leqslant -\epsilon \right)
			\nonumber\\
			&\leqslant
			E_n \left( \#\left\{ \nu\in N_{\abs{nh}}^{n}, 
			\dfrac{V^n_\nu}{\sqrt{na_n}} \leqslant -\epsilon \right\} \right)
			\nonumber\\
			&=
			\exp\left(S^n_{\abs{nh}}\right) 
			P_n\left( B^{a_n}_n(h)
			\leqslant -\epsilon \right)
			\nonumber
		\end{align}
		Here $B^{a_n}_n(\cdot)$ is defined in \Cref{MDP for RW}.
		Under \Cref{A2}, due to the choice of $h$, which means for $n$ large enough, we have $S_{\abs{nh}}^n < \frac{a_n\epsilon^2}{3}$,
		and for normal distribution, we know $\P(\ND(0,\delta^2)<-x) < \exp(-\frac{x^2}{2\delta^2})$ for any $x>0$, as $n\rightarrow\infty$, we have
		\begin{align}
			\lim\limits_{n\rightarrow\infty}
			P_n\left(\forall \nu\in N_n^{n}, V_\nu^{n,a_n}(h) \leqslant-\epsilon \right)
			&\leqslant
			\lim\limits_{n\rightarrow\infty} 
			\exp\left(S^n_{\abs{nh}}\right) 
			P_n\left( B^{a_n}_n(h)
			\leqslant -\epsilon \right)
			\nonumber\\
			&\leqslant
			\lim\limits_{n\rightarrow\infty} 
			\exp\left(\frac{a_n\epsilon^2}{3}\right)
			\exp\left( -\frac{\epsilon^2a_n}{2h} \right)
			\nonumber\\
			&\leqslant
			\lim\limits_{n\rightarrow\infty} 
			\exp\left(\frac{a_n\epsilon^2}{3}\right)
			\exp\left( -\frac{a_n\epsilon^2}{2} \right)
			=0.\nonumber
		\end{align}
		So, for any $\epsilon>0$, there exist $h_0\in (0,1)$, such that for any $h\in (0,h_0)$, we have
		\begin{equation}
			\lim\limits_{n\rightarrow\infty}
			P_n\left(\forall \nu\in N_n^{n}, V_\nu^{n,a_n}(h)>-\epsilon \right)
			=1.\label{eq20}
		\end{equation}
		
		For each $n$,
		according to the definition of $A_f$, 
		there exist a function $g_n\in C_0[0,1]$ 
		such that $S_f(g_n) = 0$ and $ A_f\geqslant g_n(1)> A_f-\frac{1}{n}$.
		
		According to the goodness of $S_f$,
		there exist some sub-sequence $\{g_{n_k}\}$ 
		convergence to some function $g$ in $C_0\left([0,1]\right)$, 
		then $g(1)=\lim\limits_{k\rightarrow\infty}g_{n_k}(1)=A_f$,
		and $S_f(g)=0$.
		
		For the function $g$,
		define an increasing function $Q(y)$ as follow:
		\begin{equation*}
			Q(y):= \int_{0}^{y} \dfrac{1}{2}g^{\prime}(s)^2 \d s.
		\end{equation*}
		For $h$ small enough (in fact $h<Q(1)$),
		we can define $\phi(h)$ as the generalized inverse function of $Q(y)$ 
		as follow:
		\begin{equation*}
			\phi(h):=\inf\left\{y\in [0,1]; Q(y)>h\right\}.
		\end{equation*}
		About the function $Q(y)$ and $\phi(h)$, we have
		\begin{lemma}\label{Q and phi}
			For any $y>0$, $Q(y)>0$, 
			then $\phi(0)=0$.
		\end{lemma}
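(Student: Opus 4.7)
The proof naturally splits into two pieces. First, the identity $\phi(0) = 0$ is an immediate consequence of the strict positivity $Q(y) > 0$ for every $y > 0$. Indeed, $Q$ is nondecreasing and continuous with $Q(0) = 0$, so whenever $Q(y) > 0$ for all $y \in (0, 1]$, the set $\{y \in [0,1] : Q(y) > 0\}$ equals $(0, 1]$, and $\phi(0) = \inf (0, 1] = 0$ by definition. Hence the substantive content lies in proving the positivity of $Q$.

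I would establish $Q(y) > 0$ for every $y > 0$ by contradiction. Suppose $Q(y_0) = 0$ for some $y_0 \in (0, 1]$; then $g'(s) = 0$ for almost every $s \in [0, y_0]$, and since $g(0) = 0$ this forces $g \equiv 0$ on $[0, y_0]$. The aim is to exhibit a competitor $\tilde g \in C_0([0, 1])$ with $\tilde g(1) > A_f$ and $S_f(\tilde g) \leq 0$, contradicting the definition of $A_f$ as a supremum.

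The natural construction exploits the unused budget: since $f(y_0) > 0$ by the standing hypothesis that $f > 0$ on $(0, 1]$, the function $g$ leaves an entire slab of slack untouched on $[0, y_0]$. One picks $\chi$ on $[0, y_0]$ with $\chi(0) = 0$, $\chi(y_0) > 0$, and $\int_0^r \chi'(s)^2/2\, ds \leq f(r)$ for every $r \in [0, y_0]$---for instance $\chi'(s) := \sqrt{2 f'(s)}$, using that the monotone $f$ is differentiable almost everywhere with $\int_0^r f'(u)\, du \leq f(r)$. Then set $\tilde g(s) := \chi(s)$ on $[0, y_0]$ and $\tilde g(s) := \chi(y_0) + g(s)$ on $[y_0, 1]$. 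This gives $\tilde g(1) = \chi(y_0) + A_f > A_f$, and feasibility on $[0, y_0]$ is automatic.

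The main obstacle is verifying feasibility on $[y_0, 1]$, which reduces to the inequality $\int_0^{y_0} \chi'(s)^2/2\, ds \leq f(r) - \int_0^r g'(s)^2/2\, ds$ for every $r \in [y_0, 1]$. Since $S_f(g) = 0$ the right side is nonnegative, but it may vanish at some $r^* \in [y_0, 1]$ where $g$ saturates its constraint, and in that case the naive construction forces $\chi \equiv 0$. To overcome this I would simultaneously rescale $g'$ by a factor $\lambda \in (0, 1)$ on the critical interval $[y_0, r^*]$, freeing up $(1-\lambda^2) \int_{y_0}^{r^*} g'^2/2\, ds$ of budget while paying $(1-\lambda)[g(r^*) - g(y_0)]$ in lost height. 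The delicate bookkeeping is to show that the gain from $\chi(y_0) > 0$---which is of first order in $\chi$ because $f(y_0) > 0$ is a fixed positive quantity---dominates the height loss from rescaling, for $\lambda$ sufficiently close to $1$. This compensation step, essentially powered by the strict positivity $f(y_0) > 0$, is the heart of the proof and the step where the most care is required.
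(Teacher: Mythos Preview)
Your strategy---build a competitor that exploits the unused budget on $[0,y_0]$ and simultaneously shrinks $g'$ on a later interval to restore feasibility---is exactly the paper's strategy.  The paper defines $t_0 = \sup\{t: Q(t)=0\}$, inserts a constant slope $\delta$ on $(t_0/2,t_0]$, and multiplies $g'$ by $1-\alpha$ on a short interval $(t_0,t_1]$ chosen so that $f-Q \ge f(t_0)/2$ there; the explicit parameter window $\alpha\in(\delta^2 t_0/(4Q(t_1)),\ \delta t_0/(2g(t_1)))$ is then shown to be nonempty for small~$\delta$.  This is precisely the ``delicate bookkeeping'' you anticipate.

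There is, however, a genuine gap in your concrete choice of $\chi$.  Under \Cref{A2} the function $f$ is only assumed continuous and nondecreasing, so it may be singular (think of a Cantor-type $f$ with $f'=0$ a.e.).  In that case $\chi'(s)=\sqrt{2f'(s)}\equiv 0$ and $\chi(y_0)=0$, so the competitor gains nothing.  The paper sidesteps this by taking $\chi'\equiv\delta$ on $(t_0/2,t_0]$ and using that $R(t):=f(t)/(t-t_0/2)$ is continuous, strictly positive, and blows up as $t\downarrow t_0/2$, hence bounded below by some $\gamma>0$; any $\delta<\sqrt{2\gamma}$ then satisfies the feasibility constraint on $(t_0/2,t_0]$ without any differentiability of~$f$.

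A smaller point: your rescaling interval $[y_0,r^*]$, with $r^*$ a saturation point, is workable but requires care about intermediate $r\in(y_0,r^*)$ where the slack $f(r)-Q(r)$ may be very small.  The paper's choice of $t_1$ as a point near $t_0$ with uniform slack $f-Q\ge f(t_0)/2$ on $[t_0,t_1]$ makes feasibility on that interval automatic (the added cost $\delta^2 t_0/4$ is absorbed by the slack), and the rescaling savings $\alpha Q(t_1)$ then pay for the added cost on $(t_1,1]$.  This two-zone split is cleaner than tracking a possibly hard-to-locate~$r^*$.
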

		
		The \Cref{Q and phi} will be proved after finishing the proof of \Cref{Max Dis of TIBRW}.
		
		For $h$ small enough such that $f(h)<Q(1)$,
		define $g_h(s)\in C_0\left([0,1]\right)$ with deviation as follow:
		\[
		g^\prime_h(t):=
		\begin{cases}
			0, \quad &t\in[0,\phi(f(h))];
			\\
			g^{\prime}(t), \quad &t\in [\phi(f(h)),1].
		\end{cases}
		\]
		Thus $g_h(1)= g(1)-g(\phi(f(h))) $.
		Due to $S_f(g)=0$,
		according to the definition of $\phi$, then $\phi(f(h))\geqslant h$, 
		and $f(h)=Q(\phi(f(h)))$.
		
		And for any $\gamma \in N_{\abs{nh}}^n $, for any $\epsilon>0$, define
		\begin{align*}
			\psi_n(h,g,\epsilon)(\gamma)
			:= P_n\left( \exists \nu \in N_n^n,\gamma\prec\nu,\right.&
			V_{\nu}^{n,a_n}(s)-V_{\nu}^{n,a_n}(h)\\
			&\left.> g_h(s)-\epsilon, 
			\forall s\in[h,1]\left|\right.\mathcal{F}_{\abs{nh}}\right)
		\end{align*}
		Here $\gamma\prec\nu $ means $\gamma$ is an ancestor of $\nu$.
		According to branching property, 
		we know above representation doesn't depend on the choose of $\gamma$, so just denote the value as $\psi_n(h,g,\epsilon)$.
		
		It is not hard to verified the deviation principle also works
		for the time-inhomogeneous branching random walk 
		that roots at any $\gamma\in N_{\abs{nh}}^n$ (just repeat the previous argument).
		So, according to the deviation principle,
		for any open set $V$ in $C_0([0,1-h];\R)$, we know
		\begin{align*}
			\liminf\limits_{n\rightarrow\infty}\dfrac{1}{a_n}
			\log &P_n\left(
			\exists \nu \in N_n^n,\gamma\prec\nu, V_\nu^{n,a_n}(h+\cdot)-V_\nu^{n,a_n}(h) \in V
			\right)
			\\
			&\geqslant
			-\inf\limits_{k\in V} \sup\left( \int_{0}^{\omega} \dfrac{1}{2} k^{\prime}(s)^2\d s-\left(f(h+\omega)-f(h)\right)\right)
		\end{align*}
		
		Choosing $V:=\left\{ k(\cdot)\in C_0\left([0,1-h];\R\right): 
		k(s) > g_h(h+s)-\epsilon, \forall s\in[0,1-h] \right\}$
		and $g_h(h+\cdot)\in V$.
		Note that
		\begin{equation*}
			\int_{0}^{r} \dfrac{1}{2} g_h^{\prime}(h+s)^2 \d s - f(h+r)+f(h)
			\leqslant
			\begin{cases}
				0-f(h+r)+f(h)\leqslant 0, \\
				\quad\quad \quad \quad  r\in [0,\phi(f(h))-h);
				\\
				\int_{0}^{r}
				\dfrac{1}{2} g^{\prime}(s)^2 \d s-f(h+r)\leqslant 0, 
				\\
				\quad\quad \quad \quad r\in [\phi(f(h))-h,1-h].
			\end{cases}
		\end{equation*}
		Then, we know
		\begin{equation}\label{eq21}
			\lim\limits_{n\rightarrow\infty}\dfrac{1}{a_n}\log\psi_n(h,g,\epsilon)=0.
		\end{equation}
		
		What's more,
		\begin{align}
			&P_n\left( \exists \nu\in N_n^n, 
			V_{\nu}^{n,a_n}(s)-V_\nu^{n,a_n}(h) >g_h(s)-\epsilon,
			\forall s\in [h,1] \right)
			\nonumber\\
			=&
			P_n\left(\bigcup\limits_{\gamma\in N^n_{\abs{nh}}} 
			\left\{
			\exists \nu\in N_n^n, \gamma \prec\nu
			V_{\nu}^{n,a_n}(s)-V_\nu^{n,a_n}(h) >g_h(s)-\epsilon,
			\forall s\in [h,1]
			\right\}
			\right)
			\nonumber\\
			=&
			E_n\left(1-\left(1-\psi_n(h,g,\epsilon)\right)^{Z^n_{\abs{nh}}} \right)
			\nonumber\\
			\geqslant&
			1-E_n\left(\exp\left(-Z^n_{\abs{nh}}\psi_n(h,g,\epsilon)\right)\right)\label{eq22}
		\end{align}
		
		Note that, for any $\beta >0$,
		\begin{align}
			E_n&\left(\exp\left(-Z^n_{\abs{nh}}\psi_n(h,g,\epsilon)\right)\right)
			=
			E_n\left(\exp\left(-\exp\left( 
			\log Z^n_{\abs{nh}} +\log\psi_n(h,g,\epsilon) \right) \right)\right)
			\nonumber\\
			&\leqslant
			P_n\left(Z^n_{\abs{nh}}\leqslant\exp\left(\beta S_{\abs{nh}}^n\right)\right)
			\nonumber\\
			&+E_n\left(\exp\left(-\exp\left( \log Z^n_{\abs{nh}} +
			\log\psi_n(h,g,\epsilon) \right) \right);
			Z^n_{\abs{nh}}>\exp\left(\beta S_{\abs{nh}}^n\right)\right)
			\nonumber\\
			&\leqslant
			P_n\left(Z^n_{\abs{nh}}\leqslant\exp\left(\beta S_{\abs{nh}}^n\right)\right)
			+
			\exp\left( -\exp\left( \beta S_{\abs{nh}}^n 
			+\log\psi_n(h,g,\epsilon) \right) \right)
			\label{eq23}
		\end{align}
		Under \Cref{A5}, we know
		\begin{equation}
			\lim\limits_{\beta\rightarrow0}
			\limsup\limits_{n\rightarrow\infty}
			P_n\left( Z^n_{\abs{nh}} \leqslant
			\exp\left( \beta S^n_{\abs{nh}} \right)\right)
			=0\label{eq24}
		\end{equation}
		Under \Cref{A2}, and \cref{eq21}, we know for any $\beta>0$,
		\begin{equation}\label{25}
			\lim\limits_{n\rightarrow\infty}E_n\left(
			\exp\left( -\exp\left( \beta S_{\abs{nh}}^n 
			+\log\psi_n(h,g,\epsilon) \right)
			\right)
			\right) = 0.
		\end{equation}
		
		Combine \cref{eq22,eq23,eq24,25},
		firstly let $n\rightarrow\infty$, then $\beta\rightarrow0$, we have
		\begin{align}
			\lim\limits_{n\rightarrow\infty}P_n\left( \exists \nu\in N_n^n,\right.&\left. 
			V_{\nu}^{n,a_n}(s)-V_\nu^{n,a_n}(h) >g_h(s)-\epsilon,
			\forall s\in [h,1] \right)
			\nonumber\\
			\geqslant&
			1-\lim\limits_{\beta\rightarrow0}\limsup\limits_{n\rightarrow\infty}
			E_n\left(\exp\left(-Z^n_{\abs{nh}}\psi_n(h,g,\epsilon)\right)\right)
			=1.\label{eq25}
		\end{align}
		
		About $\phi(f(h))$,
		\begin{align*}
			\left| g(\phi(f(h))) \right|^2&= \left| 
			\int_{0}^{\phi(f(h))} g^{\prime}(s)\d s\right|^2
			\\
			&\leqslant
			\left( \int_{0}^{\phi(f(h))} g^{\prime}(s)^2 \d s \right)
			\left( \int_{0}^{\phi(f(h))} 1 \d s \right)
			\\
			&\leqslant
			2f(h)\phi(f(h)).
		\end{align*}
		
		According to \Cref{Q and phi} and the continuity of $f$,
		then $\lim\limits_{h\rightarrow0^+} \phi(f(h)) = \phi(0^+)=0$,
		so there exist some $h_1>0$ such that $\phi(f(h))<\frac{1}{2}\epsilon$ for any $h\in (0,h_1)$. 
		
		According to \Cref{A2}, there exist some $h_2>0$ such that $f(h)< \epsilon$ for any $h\in (0,h_2)$.
		
		Together with the $h_0$ of \cref{eq20}, fix any $h\in (0,h_0\wedge h_1\wedge h_2)$, then $g(\phi(f(h))) < 2\epsilon$, 
		and recall that $g_h(1)= g(1)-g(\phi(f(h))) $, thus
		\begin{align}
			&P_n\left( \dfrac{M_n^{n}}{\sqrt{na_n}} \geqslant 
			g(1)-4\epsilon  \right)
			\geqslant P_n\left( \dfrac{M_n^{n}}{\sqrt{na_n}} \geqslant 
			g(1)-g(\phi(f(h)))- 2\epsilon \right)
			\nonumber\\
			\geqslant
			&P_n\left(  \exists \nu\in N_n^n, 
			V_{\nu}^{n,a_n}(s)-V_\nu^{n,a_n}(h) >g_h(s)-\epsilon,
			\right.
			\nonumber\\
			&\quad \quad\quad\quad\quad\quad\quad\quad\quad\quad\quad \left.
			\forall s\in [h,1]; 
			\forall \nu\in N_n^{n}, V_\nu^{n,a_n}(h)> -\epsilon 
			\right)
			\nonumber\\
			\geqslant
			&P_n\left(\exists \nu\in N_n^n, 
			V_{\nu}^{n,a_n}(s)-V_\nu^{n,a_n}(h)>g_h(s)-\epsilon,
			\forall s\in [h,1] \right)
			\nonumber\\
			&\quad \quad\quad\quad\quad\quad\quad\quad\quad\quad\quad
			+P_n\left( \forall \nu\in N_n^{n}, V_\nu^{n,a_n}(h) >-\epsilon \right)
			-1
			\label{eq26}
		\end{align}
		
		Combine \cref{eq20,eq25,eq26}, let $n\rightarrow\infty$, we have
		\begin{align}
			\lim\limits_{n\rightarrow\infty}
			P_n\left( \dfrac{M_n^{n}}{\sqrt{na_n}} \geqslant 
			g(1)-4\epsilon \right)
			=1.
		\end{align}
		
		Due to the arbitrariness of $\epsilon$, together with \cref{eq upper bound}, then
		\begin{equation*}
			\dfrac{M_n^n}{\sqrt{na_n}}\Longrightarrow A_f.
		\end{equation*}
		
		The proof of \Cref{Max Dis of TIBRW} is finished.
	\end{proof}
	
	\
	
	Now, we are going to prove \Cref{Q and phi}:
	\begin{proof}[Proof of \Cref{Q and phi}]
		It is easy to find that $g^\prime(s)\geqslant 0$ almost surely, which means $g$ is non-decreasing function (else, just set $g_1^{\prime}=|g^\prime|$, then the function $g_1$ also satisfies the condition in \cref{representation A_f} while $g_1(1)>g(1)=A_f$, which contradicts the definition of $A_f$). 
		
		Define $t_0:=\sup\{t\in [0,1];Q(t)=0 \} $, we will prove that $t_0=0$.
		If not, assume that $t_0>0$, then $g(t)=0$ for all $t\in [0,t_0]$ and 
		$Q(t)>0$ for $t\in(t_0,1]$.
		
		Let $R(t):= \dfrac{f(t)}{t-\frac{1}{2}t_0}$ in $t\in (\frac{t_0}{2},t_0]$ is a continuous and strict positive function, and $\lim_{t\rightarrow \frac{t_0}{2}^+}R(t)=\infty$ ($f(t)>0$ for all $t>0$).
		So there exist some $\gamma>0$ such that $R(t)>\gamma$ for all $t\in(\frac{t_0}{2},t_0]$.
		
		Note that, $f(t)-Q(t)$ is continuous function, 
		and $Q(t_0)=0$ and $f(t_0)>0$, then we can fix some $t_1\in (t_0,1)$, such that $f(t)-Q(t);t\in [t_0,t_1]\geqslant \dfrac{f(t_0)}{2}$ for all $t\in [t_0,t_1]$.
		
		Fix some $\delta\in (0, \sqrt{\frac{2f(t_0)}{t_0}}\wedge \sqrt{2\gamma}\wedge \dfrac{2Q(t_1)}{g(t_1)}\wedge \dfrac{2g(t_1)}{t_0})$.
		According to the choose of $\delta$,
		the set $\left( \dfrac{\delta^2t_0}{4Q(t_1)} , 
		\dfrac{\delta t_0}{2g(t_1)} \right)$ is non-empty,
		and then fix some $\alpha \in 
		\left( \dfrac{\delta^2t_0}{4Q(t_1)} , 
		\dfrac{\delta t_0}{2g(t_1)} \right) \subset (0,1)$. 
		Due to the choice of $\delta$ and $\alpha$,
		\begin{align*}
			&\frac{1}{2}\delta^2(t-\dfrac{t_0}{2}) < f(t), \quad \forall t\in [\frac{t_0}{2},t_0];\quad\quad \quad \quad  \alpha g(t_1)<\dfrac{\delta t_0}{2};
			\\
			&\dfrac{\delta^2t_0}{4}<\dfrac{f(t_0)}{2}< f(t)-Q(t), \quad \forall t\in [t_0,t_1];\quad
			\frac{\delta^2t_0}{4}<\alpha Q(t_1).
		\end{align*}
		
		Construct a function $g_1$ with derivative $g_1^\prime$ as follow:
		\begin{equation*}
			g_1^{\prime}(t):=
			\begin{cases}
				0,\quad t\in [0,\dfrac{t_0}{2}]
				\\
				\delta, \quad t\in(\dfrac{t_0}{2},t_0]
				\\
				(1-\alpha)g^{\prime}(t),\quad t\in ( t_0,t_1]
				\\
				g^{\prime}(t), \quad t\in (t_1,1].
			\end{cases}
		\end{equation*}
		Just calculate:
		\begin{equation*}
			\int_{0}^{t} \dfrac{1}{2} g_1^{\prime}(s)^2 \d s
			=
			\begin{cases}
				0 \leqslant f(t) ,\quad t\in [0,\frac{t_0}{2}]
				\\
				\frac{1}{2}\delta^2\left(t-\frac{1}{2}t_0\right)\leqslant f(t), 
				\quad t\in(\frac{t_0}{2},t_0]
				\\
				(1-\alpha)^2Q(t)+\frac{\delta^2t_0}{4}
				\\\quad\quad\quad\quad\quad\quad<(1-\alpha)^2Q(t)
				+f(t)-Q(t)\leqslant f(t),\quad t\in ( t_0,t_1]
				\\
				(1-\alpha)^2Q(t_1)+\frac{\delta^2t_0}{4}+Q(t)-Q(t_1)
				\\
				\quad\quad\quad\quad\quad\quad\leqslant
				Q(t)+\frac{\delta^2t_0}{4}-\alpha Q(t_1)
				<Q(t)\leqslant f(t), \quad t\in (t_1,1].
			\end{cases}
		\end{equation*}
		Thus $S_f(g_1)=0$ and
		\begin{align*}
			g_1(1)= \int_{0}^{1} g_1^{\prime} (s) \d s
			&=
			0+\dfrac{\delta t_0}{2}
			+(1-\alpha)g(t_1)
			+g(1)-g(t_1)\\
			&=
			g(1)+\dfrac{\delta t_0}{2}-\alpha g(t_1)>g(1)=A_f.
		\end{align*}
		which contradicts the definition of $A_f$.
		
		Thus $t_0=0$. Then for any $y>0$, $Q(y)>0$. And for the inverse function $\phi(h)$, due to the continuity of $Q(y)$, it is not hard to prove $\lim\limits_{h\rightarrow 0^+} \phi(h)= t_0=0$.
		
		The proof of \Cref{Q and phi} is over.
	\end{proof}

	\section{Critical branching processes in random environment}\label{Sec3}

	In this section, we will consider the critical branching processes in random environment. In order to use the results that established in \Cref{sec2} to help us investigate the conditional limit theorem, we just need to verify that conditioned on the survival events, the reduced branching processes do satisfy the assumptions in \Cref{Sec2.1}.
	
	\
	
	For an environment $\xi:=\{F_1,F_2,\cdots\}$,
	we also use $F_k$ denote the corresponding generating function.
	Define
	\[
	F_{k,n}(s):=
	\begin{cases}
		F_{k+1}\circ F_{k+2}\circ\cdots F_{n},\quad & k<n;
		\\
		\quad \quad \delta_1(s),\quad &k=n;
		\\
		F_{k}\circ F_{k+1}\circ\cdots F_{n+1}, \quad & k>n.
	\end{cases}
	\]
	Then $E_{\xi}\left[Z_n=0|Z_k=1\right]=F_{k,n}(0)$ 
	for any $k\leqslant n$, define $P_{\xi}(k,n):=1-F_{k,n}(0)$.
	
	For any $k\leqslant n $, 
	let $Z(k,n)$ be the number of particles at time $k$ 
	which have at least one descendant survived at time $n$.
	Given the environment $\xi$, 
	and conditional on $\{Z_n>0\}$,
	the reduced processes is a 
	time in-homogeneous branching process denoted by
	$\left\{Z(0,n),Z(1,n),\cdots,Z(n,n)|Z_n\right.$ $\left.>0\right\}$, 
	and its branching mechanism is given 
	by $\F^{r,n}=\{F^{r,n}_1,F^{r,n}_2,\cdots,F^{r,n}_n\}$ (here $r$  means ``reduced"), the generating function of $F_k^{r,n}$ has following representation (in fact, we can calculated it by the compound binomial distribution with condition):
	\begin{align}
		F^{r,n}_k(s):&=E_{\xi}[s^{Z(k,n)}|Z_{k-1}=1,Z_n>0]
		\nonumber\\
		&=\dfrac{1}{E_{\xi}[Z_n>0|Z_{k-1}=1]}
		\sum_{j=0}^{\infty} E_{\xi}[ s^{Z(k,n)} ;Z_{k}=j;Z(k,n)>0|Z_{k-1}=1].
		\nonumber\\
		&=\dfrac{ F_{k}\left(1-P_{\xi}(k,n)+sP_{\xi}(k,n)\right)
			-1+P_{\xi}(k-1,n) }{P_{\xi}(k-1,n)}\label{reduced branching mechanism}
	\end{align}
	Also see \cite{Fleischmann1977,Borovkov1997}.
	Recall \cref{mean and stand-var,quenched random walk}, take derivative in \cref{reduced branching mechanism}, we have
	\begin{equation}\label{reduce mean and sstand-var}
		\overline{F}^{r,n}_k:= \dfrac{P_{\xi}(k,n)}{P_{\xi}(k-1,n)}\exp(X_{k});\quad
		\tilde{F}^{r,n}_k:= P_{\xi}(k-1,n)\eta_{k}.
	\end{equation}
	
	The mean of the reduced processes is defined as $E_{\xi}[Z(k,n)|Z_n>0]$, 
	which depends only on the environment $\xi$ and satisfies
	\begin{equation}\label{mean of conditional reduced processes}
		E_{\xi}[Z(k,n)|Z_n>0]
		=
		\prod\limits_{j=1}^k \overline{F}_k^{r,n}
		=
		\dfrac{P_{\xi}(k,n)}{P_{\xi}(0,n)}\exp(S_k).
	\end{equation}
	Here $S_k$ is defined in \cref{quenched random walk}.
	
	\
	
	Define $L(k,n):=\min\{S_j:k\leqslant j\leqslant n\}$ 
	be the minimal value 
	of path $\{S_k,0\leqslant k \leqslant n\}$ between $k$ and $n$. Firstly, recall some  basic theorems describe the behavior of branching processes in random environment, which is related to the Brownian meander.
	
	\
	
	\begin{theorem}[Theorem 1.5 in \cite{Afanasyev2005}]\label{Environment Convergence to Brownian meander}
		Under \Cref{A1}, the following convergence holds in $D([0,1])$:
		\begin{equation*}
			\mathcal{L}\left(
			\left\{\dfrac{S_{\abs{nt}}}{\sigma\sqrt{n}};t\in[0,1]\right\}
			|Z_n>0\right)
			\Longrightarrow
			\mathcal{L}\left(
			\left\{ W_t^+;t\in [0,1] \right\}
			\right),\quad \text{as}\quad n\rightarrow\infty.
		\end{equation*}
		Here $ \left\{ W_t^+;t\in [0,1] \right\} $ is the Brownian meander.
	\end{theorem}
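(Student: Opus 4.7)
The plan is to reduce the conditional limit theorem to a classical invariance principle for random walks conditioned to stay nonnegative, following the strategy of \cite{Afanasyev2005}. The natural scaling $S_{\abs{nt}}/(\sigma\sqrt n)$ is the Donsker scaling, and since the unconditional limit is Brownian motion, the conditioning event $\{Z_n>0\}$ is what produces the meander. So the task is to show that conditioning on survival of the critical branching process in random environment is asymptotically equivalent to conditioning the associated walk on $\{L(0,n)\ge 0\}$, which is the classical conditioning yielding the Brownian meander.

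First I would establish a quenched comparison between $P_\xi(0,n)$ and $\exp(-L(0,n))$. Iterating the generating-function recursion $1-F_{k-1,n}(s) = \overline F_k(1-F_{k,n}(s))[1-\tfrac12 \tilde F_k \overline F_k(1-F_{k,n}(s))+\cdots]$ and truncating the nonlinear remainder using the moment control on $\varkappa(F,a)$, one obtains, on a set of $\P$-typical environments, an expression of the form
\begin{equation*}
    P_\xi(0,n) \sim \frac{\exp(-L(0,n))}{1+\sum_{k=0}^{n-1}\eta_{k+1}\exp(S_k-L(0,n))}.
\end{equation*}
Integrating over the environment reproduces the Kozlov asymptotic $\sqrt n\,\P(Z_n>0)\to K$ of \eqref{Extinction Prob Decay} and, more importantly, yields an explicit Radon--Nikodym factor $R_n$ relating $\P(\cdot\mid Z_n>0)$ to $\P(\cdot\mid L(0,n)\ge 0)$. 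The moment hypothesis $\E[(\log^+\varkappa(F,a))^{2+\delta}]<\infty$ is calibrated exactly so that the atypical environments contribute $o(n^{-1/2})$ to the survival probability and so that $R_n$ is uniformly integrable and converges jointly with the rescaled walk to a nontrivial functional of the limiting meander.

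Second, I would invoke the classical functional limit theorem (Iglehart--Bolthausen) for centered, finite-variance random walks conditioned on $\{L(0,n)\ge 0\}$: such walks, rescaled in the Donsker fashion, converge in $D([0,1])$ to the standard Brownian meander $\{W_t^+;t\in[0,1]\}$. Combining this with Step 1 via a weighted convergence argument (continuous mapping applied to $R_n$ together with uniform integrability) transfers the meander limit from the walk conditioning to the branching-survival conditioning, giving the statement.

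The main obstacle is Step 1---the uniform-in-environment comparison of $P_\xi(0,n)$ with $\exp(-L(0,n))$. One must isolate a "good" set of environments on which the nonlinear corrections in the iterated generating function recursion are controlled by the truncated second moment $\varkappa(F,a)$, show that the $\P$-mass of the complement decays strictly faster than $n^{-1/2}$ (so that the conditional laws are not perturbed in the limit), and then verify the joint convergence of $R_n$ with the rescaled walk. Once this technical reduction is executed, the passage to the meander limit is a soft consequence of the classical invariance principle and weighted weak convergence.
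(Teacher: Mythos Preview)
The paper does not prove this statement: it is quoted verbatim as Theorem~1.5 of \cite{Afanasyev2005} and used as a black box throughout Section~3. There is no argument in the paper to compare your proposal against.

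That said, your sketch is a fair summary of the strategy actually carried out in \cite{Afanasyev2005}, so it is not wrong---it simply does not belong here. A couple of technical remarks if you ever need to fill in the details: the reduction from $\{Z_n>0\}$ to $\{L(0,n)\ge 0\}$ is not done by showing that a Radon--Nikodym factor $R_n$ converges jointly with the rescaled path, as you suggest. Instead one decomposes along the time $\tau_n$ of the global minimum of $S$ on $[0,n]$, shows that $\tau_n$ is tight (of order $O(1)$) under $\P(\cdot\mid Z_n>0)$, and then compares the post-$\tau_n$ piece of the walk with a walk under $\P(\cdot\mid L_n\ge 0)$ via the measure $\P^+$ (the Doob $h$-transform making the walk stay nonnegative). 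The quenched comparison you wrote, $P_\xi(0,n)\sim e^{L(0,n)}/(1+\sum\eta_{k+1}e^{S_k-L(0,n)})$, is essentially Agresti's estimate and is used in the argument, but it does not by itself yield a tractable likelihood ratio between the two conditionings; the minimum-time decomposition is what actually makes the transfer work.
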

	
	\
	
	\begin{theorem}[Corollary 1.6 in \cite{Afanasyev2005}]\label{Branching Convergence to Brownian meander}
		Under \Cref{A1}, the following convergence holds in $D([0,1])$:
		\begin{equation*}
			\mathcal{L}\left(
			\left\{\dfrac{\log Z_{\abs{nt}}}{\sigma\sqrt{n}};t\in[0,1]\right\}
			|Z_n>0\right)
			\Longrightarrow
			\mathcal{L}\left(
			\left\{ W_t^+;t\in [0,1] \right\}
			\right),\quad \text{as}\quad n\rightarrow\infty.
		\end{equation*}
		Here $ \left\{ W_t^+;t\in [0,1] \right\} $ is the Brownian meander.
	\end{theorem}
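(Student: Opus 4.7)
Since this theorem is a direct distributional upgrade of \Cref{Environment Convergence to Brownian meander} from the random walk $S_k$ to $\log Z_k$, my plan is to proceed by a coupling argument: write $\log Z_k = S_k + \log W_k$ on $\{Z_k>0\}$, where $W_k := Z_k e^{-S_k}$, and then show that the remainder $\log W_k$ is negligible at scale $\sqrt n$ uniformly in $k\le n$ under the conditional law $\P(\,\cdot\mid Z_n>0)$. Once this is established, Slutsky combined with \Cref{Environment Convergence to Brownian meander} transfers the meander limit from the polygonal interpolation of $S_{\lfloor nt\rfloor}/(\sigma\sqrt n)$ to that of $\log Z_{\lfloor nt\rfloor}/(\sigma\sqrt n)$ in $D([0,1])$. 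Concretely, it suffices to prove that for every $\varepsilon>0$,
\[
\lim_{n\to\infty}\P\!\left(\sup_{0\le k\le n}\frac{|\log W_k|}{\sqrt n}>\varepsilon\,\Big|\,Z_n>0\right)=0.
\]

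For the upper tail $\log W_k\le \varepsilon\sqrt n$, I would exploit that, quenched on the environment $\xi$, $\{W_k\}$ is a non-negative $P_\xi$-martingale with $W_0=1$. Truncating the offspring laws at level $a$ from \Cref{A1} and applying Doob's maximal inequality to the truncated martingale yields that $\log\sup_{k\le n} W_k$ is tight on the unconditional scale; the moment condition $\E[(\log_+\varkappa(F,a))^{2+\delta}]<\infty$ in \Cref{A1} is precisely what is needed to absorb the contribution of atypically large offspring, and the factor of $\sqrt n$ one loses when passing from the annealed to the conditional probability via \cref{Extinction Prob Decay} is harmless against the $\sqrt n$ scaling. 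So this direction reduces to a uniform integrability argument of Biggins--Kyprianou type.

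The lower tail $\log W_k\ge -\varepsilon\sqrt n$ is the genuine difficulty, because the conditioning $\{Z_n>0\}$ has probability of order $1/\sqrt n$, so any event that is not exponentially rare on the annealed scale is invisible. My proposal is to pass to the reduced process $Z(k,n)$, for which \cref{mean of conditional reduced processes} gives the explicit conditional mean $E_\xi[Z(k,n)\mid Z_n>0]=(P_\xi(k,n)/P_\xi(0,n))\,e^{S_k}$ and \cref{reduce mean and sstand-var} gives a workable second-moment structure. Since $Z_k\ge Z(k,n)$, a second-moment / Paley--Zygmund computation conditional on the environment, combined with Kozlov-type control of $P_\xi(k,n)/P_\xi(0,n)$ (which, by Afanasyev's results under \Cref{A1}, is of polynomial order in the quenched random walk), shows that with conditional probability tending to one, $Z_k$ is not much smaller than its conditional mean. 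This yields $\log W_k\ge -C\log n$ for a suitable $C$ uniformly in $k\le n$, which is $o(\sqrt n)$.

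The hard part is exactly this last step: obtaining the lower-tail control uniformly in $k$ with only the $L(\log L)^{2+\delta}$-type moment in \Cref{A1}, rather than a full $L^2$ assumption on the offspring. I would follow the refined reduced-tree analysis of \cite{Afanasyev2005}, where one decomposes the interval $[0,n]$ according to the level sets of the minimum of the associated random walk and uses the functional meander limit to argue that the quenched second moment does not blow up faster than the square of the mean on the conditional event. This is the route by which Corollary 1.6 of \cite{Afanasyev2005} is established, and I do not see a genuinely shorter path; the moment condition in \Cref{A1} is calibrated exactly to make this argument close.
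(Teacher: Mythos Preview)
The paper does not give its own proof of this statement: it is quoted verbatim as Corollary~1.6 of \cite{Afanasyev2005} and used as a black box (see the remark immediately following it). There is therefore no in-paper argument to compare your proposal against.

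That said, your sketch is a reasonable outline of how the result is obtained in \cite{Afanasyev2005}. The decomposition $\log Z_k = S_k + \log W_k$ and the plan to show $\sup_{k\le n}|\log W_k| = o(\sqrt{n})$ in conditional probability is exactly the structure of their Corollary~1.6, which is deduced from their Theorem~1.5 (the analogue of \Cref{Environment Convergence to Brownian meander} here). One point to be careful about: your treatment of the lower tail via the reduced process and Paley--Zygmund is heuristically right, but in \cite{Afanasyev2005} the actual mechanism is a change of measure to $\P^+$ (the random walk conditioned to stay non-negative) together with their Lemma~2.5 transferring $\P^+$-almost-sure statements back to $\P(\,\cdot\mid L_n\ge 0)$; this is how they avoid needing a genuine second moment and make do with the $(\log_+\varkappa)^{2+\delta}$ condition in \Cref{A1}. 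Your final paragraph acknowledges this, so there is no real gap, only a slight mismatch of emphasis between ``second-moment/Paley--Zygmund'' and what is actually done in the reference.
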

	
	\
	
	\begin{remark}
		\Cref{Environment Convergence to Brownian meander} and \Cref{Branching Convergence to Brownian meander} is established for linear fractional case in \cite{Afanasyev1993}, \cite{Afanasyev1997} and many other articles for some wild cases. What's more, in \cite{Afanasyev2005}, Afanasyev, Geiger, Kersting and Vatutin investigated a new method, and extend such results to more general condition, especially under \Cref{A1}.
	\end{remark}
	
	\
	
	Now we are going to verified that for reduced branching processes in random environment, \Cref{A2} in \Cref{Sec2.1} is satisfied.
	\begin{lemma}\label{lemma of condition}
		Under \Cref{A1}, for any $\epsilon>0$,
		\begin{equation*}
			\lim\limits_{n\rightarrow\infty}
			\P\left( \log\left(1+\sum_{k=1}^{n}\eta_k \right) >
			\epsilon\sigma\sqrt{n} \left|\right. Z_n>0\right)=0.
		\end{equation*}
	\end{lemma}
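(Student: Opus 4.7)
The goal is to show $\log(1+\sum_{k=1}^n \eta_k) = o(\sqrt{n})$ in $\P(\cdot \mid Z_n > 0)$-probability. By \cref{Extinction Prob Decay} we have the crude domination $\P(\cdot \mid Z_n > 0) \leq (1+o(1))\tfrac{\sqrt{n}}{K}\P(\cdot)$, so any event whose unconditional probability is $o(1/\sqrt{n})$ will be conditionally negligible. I would start from the elementary inequality
\[
\tilde F \;\leq\; \frac{a^2}{\overline F^2} + \varkappa(F,a) \;=\; a^2 e^{-2X} + \varkappa(F,a),
\]
obtained by splitting $\sum_{j\geq 1} j^2 F[j] = \sum_{j<a} j^2 F[j] + \sum_{j\geq a} j^2 F[j]$ and bounding the first term by $a^2$. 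This gives $\sum_{k=1}^n \eta_k \leq a^2\sum_{k=1}^n e^{-2X_k} + \sum_{k=1}^n \varkappa(F_k,a)$, so it suffices to bound each piece by $\tfrac12 e^{\epsilon\sigma\sqrt{n}}$ with conditional probability tending to one.

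For the first piece I would invoke the continuity of the Brownian meander. By \cref{Environment Convergence to Brownian meander}, conditionally on $\{Z_n > 0\}$ the rescaled path $(S_{\lfloor nt\rfloor}/\sigma\sqrt{n})_{t\in[0,1]}$ converges weakly in $D([0,1])$ to a continuous Brownian meander; since the limit lies in $C([0,1])$, the standard Skorokhod-to-uniform argument forces the maximum jump of the prelimit to vanish, so $\max_{1\leq k\leq n}|X_k|/\sigma\sqrt{n} \to 0$ in $\P(\cdot \mid Z_n > 0)$-probability. The crude bound $a^2\sum_k e^{-2X_k} \leq a^2 n\exp(2\max_k |X_k|)$ then gives $\log\bigl(a^2\sum_k e^{-2X_k}\bigr) = O(\log n) + 2\max_k|X_k| = o(\sqrt{n})$ conditionally.

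For the second piece I would use truncation together with Markov's inequality and the moment condition $\E[(\log_+\varkappa(F,a))^{2+\delta}]<\infty$. Set $M_n := e^{\epsilon\sigma\sqrt{n}/4}$ and split $\sum_k \varkappa(F_k,a) = \sum_k(\varkappa\wedge M_n) + \sum_k(\varkappa - M_n)^+$. The tail bound $\P(\varkappa > t) = O((\log t)^{-(2+\delta)})$ yields $\E[\varkappa\wedge M_n] = O\bigl(M_n/(\log M_n)^{2+\delta}\bigr)$, so Markov on the truncated sum produces a probability that is exponentially small in $\sqrt{n}$, a fortiori $o(1/\sqrt{n})$. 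For the complementary event $\{\max_k \varkappa(F_k,a) > M_n\}$ the direct union bound gives $O(n^{-\delta/2})$ unconditionally; after the $\sqrt{n}$ inflation this becomes $O(n^{(1-\delta)/2})$, which tends to zero only when $\delta > 1$.

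The main obstacle is thus the low-moment regime $\delta \in (0,1]$, where this last naive estimate is too weak. The natural remedy is to replace $P_\xi(Z_n>0)\leq 1$ with the sharper $P_\xi(Z_n>0)\leq e^{L(0,n)}$, where $L(0,n):=\min_{0\leq j\leq n}S_j$, and to write
\[
\P\bigl(\max_k \varkappa(F_k,a) > M_n,\, Z_n > 0\bigr) \;\leq\; \sum_{k=1}^n \E\bigl[\ind{\varkappa(F_k,a) > M_n}\,e^{L(0,n)}\bigr].
\]
Conditioning on $F_k$ decouples $\varkappa(F_k,a)$ from the other $F_j$'s, and the classical estimate $\E[e^{L(0,n)}] = O(1/\sqrt{n})$ for the minimum of a centered, finite-variance random walk absorbs the factor lost from $\P(Z_n > 0)^{-1}$, restoring the required $o(1/\sqrt{n})$ decay.
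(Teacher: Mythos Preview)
Your route differs substantially from the paper's. The paper never uses the crude inflation $\P(\cdot\mid Z_n>0)\leq O(\sqrt{n})\,\P(\cdot)$; instead it first proves the analogous bound under the conditioning $\{L_n\geq 0\}$ (where $L_n=\min_{0\leq j\leq n}S_j$), handling the $\varkappa$-term by importing the $\P^+$-machinery of \cite{Afanasyev2005} --- specifically the a.s.\ growth bound $\varkappa_k(a)=O(\exp(k^{1/2-\delta'}))$ under $\P^+$ together with their transfer lemma from $\P^+$ to $\P(\cdot\mid L_n\geq 0)$ --- and only afterwards passes from $\{L_n\geq 0\}$ to $\{Z_n>0\}$ via the decomposition at the time $\tau_n$ where the associated walk hits its minimum. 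Your argument is more self-contained in that it avoids the $\P^+$ construction entirely, trading it for an explicit truncation and the bound $P_\xi(Z_n>0)\leq e^{L(0,n)}$; the paper's argument is shorter because it outsources the delicate estimate to \cite{Afanasyev2005}.

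There is, however, a gap in your final step. You write that conditioning on $F_k$ ``decouples $\varkappa(F_k,a)$ from the other $F_j$'s'' and then invoke $\E[e^{L(0,n)}]=O(1/\sqrt{n})$ as though $\ind{\varkappa_k>M_n}$ and $e^{L(0,n)}$ factored. They do not: $L(0,n)=\min_{0\leq j\leq n}S_j$ depends on $X_k=\log\overline{F_k}$, which is in general correlated with $\varkappa(F_k,a)$ since both are functionals of the same random measure $F_k$. The easy repair is to use the deterministic bound $L(0,n)\leq L(0,k-1)$, after which $e^{L(0,k-1)}$ is genuinely independent of $F_k$ and
\[
\sum_{k=1}^n \E\bigl[\ind{\varkappa_k>M_n}\,e^{L(0,n)}\bigr]\;\leq\;\P(\varkappa>M_n)\sum_{k=1}^n\E\bigl[e^{L(0,k-1)}\bigr]\;=\;O\bigl(n^{-1-\delta/2}\bigr)\cdot O(\sqrt{n})\;=\;O\bigl(n^{-(1+\delta)/2}\bigr),
\]
using $\E[e^{L(0,m)}]=O(m^{-1/2})$. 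Dividing by $\P(Z_n>0)\asymp n^{-1/2}$ gives $O(n^{-\delta/2})\to 0$ for every $\delta>0$, so once this is in place your case split at $\delta=1$ is no longer needed.
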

	
	\begin{proof}[Proof of \Cref{lemma of condition}]
		For any $\epsilon>0$, define
		\begin{align*}
			W_n&:= \left\{ \max\limits_{1\leqslant k\leqslant n} |X_k|>\frac{\epsilon\sqrt{n}}{4} \right\}
			\nonumber\\
			T_n&:= \left\{ \max\limits_{1\leqslant k\leqslant n} \log_+ \varkappa_k(a) >\frac{\epsilon\sqrt{n}}{4}\right\}.
		\end{align*}
		Here $\varkappa_k(a)$ is defined in \Cref{A1}.
		Set $L_n:=\min\{S_0,S_1,\cdots,S_n\}$, note that \[\left\{\dfrac{S_{\abs{nt}}}{\sigma\sqrt{n}};t\in [0,1] |L_n\geqslant 0\right\} \dcon
		\left\{W_t^+;t\in [0,1]\right\}\] in $D([0,1])$, where $\left\{ W_t^+ \right\}$ is the Brownian meander, thus its path is continuous almost surely. So about the events $W_n$,
		\begin{equation}\label{Wn}
			\lim\limits_{n\rightarrow\infty}\P\left(W_n|L_n\geqslant0\right) = 0.
		\end{equation}

		About events $T_n$, we will use the structure that established in \cite{Afanasyev2005} to estimate the probability.
		Note that $\P^+$ denote the corresponding Doob-h transform of $\P$ in \cite{Afanasyev2005}, it is called the random walk condition to stay non-negative forever (see Section 2 in \cite{Afanasyev2005} for more details). According to the proof of Lemma 2.7 in \cite{Afanasyev2005}, there exist $\delta^\prime>0$, such that
		\begin{align*}
			\varkappa_k(a) = O\left( \exp\left(k^{\frac{1}{2}-\delta^\prime}\right) \right), \quad \P^+ a.s.
		\end{align*}
		Thus we know
		\begin{equation*}
			\ind{T_n} \rightarrow 0,\quad \P^+ a.s.
		\end{equation*}
		And due to dominating convergence theorem, above convergence also holds in mean. Due to Lemma 2.5 in \cite{Afanasyev2005},
		\begin{equation}
			\lim\limits_{n\rightarrow\infty}
			\P\left( T_n|L_n\geqslant 0 \right) = 0.\label{Tn}
		\end{equation}
		
		For any $k\geqslant1$, we have
		\begin{equation*}
			\eta_k\leqslant \varkappa_k(a)+a\exp(-X_k).
		\end{equation*}
		
		Note that, for fixed constant $\Gamma$, for $n$ large enough, such that $\exp\left(\frac{\epsilon\sigma\sqrt{n}}{2}\right)-\Gamma\geqslant an\geqslant n $, then
		\begin{align*}
			B_n(\Gamma)&:=\left\{
			\log\left(\Gamma+\sum_{k=1}^{n}\eta_k \right) >
			\epsilon\sigma\sqrt{n}
			\right\}
			\nonumber\\
			&\subset
			\left\{
			n\max\limits_{1\leqslant k\leqslant n} \varkappa_k+
			an\max\limits_{1\leqslant k\leqslant n} \exp(-X_k)>
			\exp\left(\epsilon\sigma\sqrt{n}\right)-1
			\right\}
			\nonumber\\
			&\subset
			\left\{\max\limits_{1\leqslant k\leqslant n} \varkappa_k+
			\max\limits_{1\leqslant k\leqslant n} \exp(-X_k) >
			\exp\left( \frac{\epsilon\sigma\sqrt{n}}{2}\right)
			\right\}
			\nonumber\\
			&\subset
			\left\{\max\limits_{1\leqslant k\leqslant n} \log_+\varkappa_k >
			\frac{\epsilon\sigma\sqrt{n}}{4} 
			\right\}
			\cup
			\left\{ \max\limits_{1\leqslant k\leqslant n} |X_k| >
			\frac{\epsilon\sigma\sqrt{n}}{4} 
			\right\}
		\end{align*}
		Hence, together with \cref{Wn,Tn}, we know for any constant $\Gamma$,
		\begin{equation*}
			\lim\limits_{n\rightarrow\infty}\P\left(B_n(\Gamma)|L_n\geqslant0\right)=0.
		\end{equation*}
		
		Define $\tau_n:=\min\left\{0\leqslant k\leqslant n;S_k= L_n\right\}$, 
		and  $ L_{k,n}:= \min\limits_{0\leqslant i\leqslant n-k}\left\{S_{k+i}-S_k\right\} $.
		Note that
		\begin{align}
			\P\left(B_n(1);Z_n>0\right)
			&=
			\sum_{k=0}^{n}
			\P\left(B_n(1);Z_n>0;\tau_k=k;L_{k,n}\geqslant 0\right)
			\nonumber\\
			&\leqslant
			\sum_{k=0}^{\infty}
			\P\left(B_n(1);Z_n>0;\tau_k=k;L_{k,n}\geqslant 0\right)\ind{k\leqslant n}
			\label{expand}
		\end{align}
		
		For $k\in \N$ fixed,
		define $ \Gamma_k:=  k+\sum\limits_{i=1}^k \eta_i<\infty $, it is a $\mathcal{F}_k$ measurable random variable, according to the dominating convergence theorem, and $\lim\limits_{n\rightarrow\infty}\sqrt{n}\P(L_n\geqslant 0) = K_1\in (0,\infty)$, then
		\begin{align*}
			&\lim\limits_{n\rightarrow\infty}
			\sqrt{n}\P\left(B_n(1);Z_n>0;\tau_k=k;L_{k,n}\geqslant 0\right)
			\nonumber\\
			\leqslant
			&\lim\limits_{n\rightarrow\infty}
			\sqrt{n}\E\left( \P\left(  \log\left( \Gamma_k+\sum_{i=k+1}^{n} \eta_i \right) >
			\epsilon\sigma\sqrt{n} |L_{k,n}\geqslant 0\right) ;\tau_k=k \right)
			\P(L_{n,k}\geqslant 0)\nonumber\\
			\leqslant
			&K_1\lim\limits_{n\rightarrow\infty}
			\E\left( \P\left(  B_{n-k}(\Gamma_k)|L_{n-k}\geqslant0 \right) \right)
			\nonumber\\
			\leqslant
			&K_1
			\E\left( \lim\limits_{n\rightarrow\infty}\P\left(  B_{n-k}(\Gamma_k)|L_{n-k}\geqslant0 \right) \right)
			=0.
		\end{align*}
		According to the dominating convergence theorem, combine \cref{Extinction Prob Decay,expand}, thus
		\begin{equation*}
			\lim\limits_{n\rightarrow\infty}\P(B_n(1)|Z_n>0)=0.
		\end{equation*}
		
		The proof of \Cref{lemma of condition} is over.
	\end{proof}
	
	\
	
	About the reduced branching processes in random environment, we have
	\begin{theorem}[Theorem 3 in \cite{Vatutin2002}]\label{Reduce branching convergence}
		Under \Cref{A1}, condition on $\{Z_n>0\}$, in $D([0,1])$, about the reduced branching processes in random environment $\{Z(k,n)\}$,
		\begin{equation*}
			\mathcal{L}\left(
			\left\{\dfrac{Z(\abs{nt},n)}{\sigma\sqrt{n}};t\in[0,1]\right\}
			|Z_n>0\right)
			\Longrightarrow
			\mathcal{L}\left(
			\left\{ \Lambda_t;t\in [0,1]\right\}
			\right),\quad \text{as}\quad n\rightarrow\infty.
		\end{equation*}
		Here $\Lambda_t:=\inf\{W_s^+;s\in [t,1]\}$, 
		where $ \left\{ W_t^+;t\in [0,1] \right\} $ is the Brownian meander.
	\end{theorem}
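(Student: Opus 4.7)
The plan is to combine a deterministic sandwich inequality that relates the reduced process $Z(k,n)$ to the full population profile $(Z_j)_{j\in[k,n]}$ with the already-established Brownian meander limit for the population profile (\Cref{Branching Convergence to Brownian meander}).

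The first, easy, half is the upper bound. Any particle counted in $Z(k,n)$ has, by definition, at least one descendant alive at every intermediate time $j\in[k,n]$, and distinct time-$k$ ancestors contribute disjoint descendant lines. Hence $Z(k,n)\le Z_j$ for every $j\in[k,n]$, so
\[
\frac{\log Z(\lfloor nt\rfloor,n)}{\sigma\sqrt n}\;\le\;\inf_{s\in[t,1]}\frac{\log Z_{\lfloor ns\rfloor}}{\sigma\sqrt n}.
\]
Since the map $w\mapsto\inf_{s\in[\cdot,1]}w(s)$ is continuous on $C([0,1])$, combining this with \Cref{Branching Convergence to Brownian meander} and the continuous mapping theorem gives the upper-bound half of the convergence in $D([0,1])$, with limit $\Lambda_t=\inf_{s\in[t,1]}W^+_s$.

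The second, harder, half is the matching lower bound: one must exclude that $\log Z(\lfloor nt\rfloor,n)$ is appreciably smaller than the bottleneck $\inf_{j\in[\lfloor nt\rfloor,n]}\log Z_j$. The starting point is \cref{mean of conditional reduced processes},
\[
E_\xi[Z(k,n)\mid Z_n>0]=\frac{P_\xi(k,n)}{P_\xi(0,n)}\exp(S_k),
\]
together with the shifted Kozlov-type estimate that, on a typical environment, $\log P_\xi(k,n)=-\tfrac{1}{2}\log(n-k)+O_p(1)$, so that the quenched conditional mean satisfies $\log E_\xi[Z(k,n)\mid Z_n>0]=S_k+O_p(\log n)$. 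I would then argue in two steps, working quenched on an environment on which $S_{\lfloor n\cdot\rfloor}/(\sigma\sqrt n)$ is close to a given meander path $W^+$. First, for $k$ at or after the argmin $j^\star$ of $S_j$ on $[\lfloor nt\rfloor,n]$, the reduced process from $j^\star$ onward behaves like a supercritical branching process (its associated random walk is bounded below by $S_{j^\star}$), and a second-moment / $L\log L$ argument based on the truncated second moment controlled by \Cref{A1}(1) shows that $Z(j^\star,n)$ is of its mean order up to logarithmic factors, yielding $\log Z(j^\star,n)=S_{j^\star}+o(\sqrt n)$. Second, for $k\le j^\star$ the surviving lineages at time $k$ are exactly the ancestors at time $k$ of the $Z(j^\star,n)$ particles, and because the environment is ``climbing'' down to $S_{j^\star}$ the ancestral count stays essentially constant (reverse-time subcritical pruning), so $\log Z(k,n)=S_{j^\star}+o(\sqrt n)$ too. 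After normalising by $\sigma\sqrt n$ this matches $\Lambda_t$.

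Finally, convergence in $D([0,1])$ is obtained by combining the marginal convergence above at a countable dense set of times with tightness. Tightness follows from the monotonicity inequality $Z(\lfloor ns\rfloor,n)\le Z(\lfloor nr\rfloor,n)$ for $r\le s$ in $[0,t]$ combined with $Z(\lfloor ns\rfloor,n)\le Z_{\lfloor ns\rfloor}$, so the modulus of continuity of $\log Z(\lfloor n\cdot\rfloor,n)/(\sigma\sqrt n)$ is controlled by that of $(\log Z_{\lfloor n\cdot\rfloor}/(\sigma\sqrt n))$, itself tight by \Cref{Branching Convergence to Brownian meander}. The main obstacle, I expect, is the quenched concentration step: proving that, for almost every environment sampled from the annealed law on $\{Z_n>0\}$, the reduced process $Z(k,n)$ tracks its mean on the scale $e^{o(\sqrt n)}$ uniformly in $k$. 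This requires sharp control of the variance of $Z(k,n)$ under $P_\xi(\cdot\mid Z_n>0)$ via \cref{reduce mean and sstand-var}, and careful use of the truncated-moment hypothesis $\E[(\log_+\varkappa(F,a))^{2+\delta}]<\infty$ to suppress rare environments on which the branching variance is atypically large.
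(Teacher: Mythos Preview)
The paper does not actually prove this statement; it is quoted as Theorem~3 of Vatutin (2002), with \Cref{Remark 11} explaining that Vatutin's argument extends to \Cref{A1} once one feeds in the machinery of Afanasyev et al.\ (2005). So there is no detailed in-paper proof to compare against, and your proposal should be read as an attempt to reprove Vatutin's theorem from scratch.

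Your upper-bound half is correct and clean: $Z(k,n)\le Z_j$ for every $j\in[k,n]$, and the running-minimum map $w\mapsto\inf_{s\in[\cdot,1]}w(s)$ is continuous at continuous paths in $D([0,1])$, so \Cref{Branching Convergence to Brownian meander} plus continuous mapping gives the upper side of the sandwich.

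The lower-bound sketch, however, contains a genuine error. Your ``Kozlov-type estimate'' $\log P_\xi(k,n)=-\tfrac12\log(n-k)+O_p(1)$ is the \emph{annealed} (or constant-environment) survival decay; quenched, the survival probability fluctuates at order $\sqrt n$ on the log scale, and the correct bounds are those in \cref{Prob Upper}--\cref{Prob lower}, namely $\log P_\xi(k,n)=L(k,n)-S_k+o(\sqrt n)$. Your conclusion $\log E_\xi[Z(k,n)\mid Z_n>0]=S_k+O_p(\log n)$ is therefore wrong and would give the limit $W^+_t$ rather than $\Lambda_t$; the right computation from \cref{mean of conditional reduced processes} yields $L(k,n)-L(0,n)\approx L(k,n)$. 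Your pivot to the argmin $j^\star$ happens to land on the correct target $S_{j^\star}=L(\lfloor nt\rfloor,n)$ only because at the argmin the two formulas coincide, but the reasoning you give is internally inconsistent.

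Beyond this, the step you yourself flag as ``the main obstacle''---quenched concentration of $Z(k,n)$ around its conditional mean on the scale $e^{o(\sqrt n)}$---is precisely the substance of Vatutin's result. Note that the paper's own \Cref{R_n convergence} establishes the upper deviation $\chi_n(t)>\varepsilon$ directly via the binomial structure and Markov's inequality, but for the lower deviation it \emph{invokes} \Cref{Reduce branching convergence}; so a self-contained proof of the lower half genuinely requires the variance control you gesture at via \cref{reduce mean and sstand-var} and the truncated-moment hypothesis. As written, that part of your proposal is a plan rather than an argument.
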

	
	\begin{remark}\label{Remark 11}
		In \cite{Vatutin2002}, Vatutin proved \Cref{Reduce branching convergence} under more stronger assumptions (comparing with \Cref{A1}). 
		At that time, the technology in \cite{Afanasyev2005} was not yet available, so the results about the random walk in random environment actually need more stronger assumptions. Together with the methods established in \cite{Afanasyev2005}, by repeat the steps in \cite{Vatutin2002}, we can expand related results to \Cref{A1}.
	\end{remark}
	
	\
	
	In this article, we need a more stronger results about the critical reduced branching processes in random environment.
	\begin{theorem}\label{Reduced convergence}
		Under the \Cref{A1}, 
		following convergence holds in $ D([0,1];\R^3)$:
		\begin{align*}
			\mathcal{L}\bigg(\left\{ \dfrac{\log E_{\xi}\left[ Z(\abs{nt},n) \left|\right. Z_n>0 \right]}{\sigma\sqrt{n}},\right.&\left.
			\dfrac{\log Z(\abs{nt},n)}{\sigma\sqrt{n}}, \dfrac{L(nt,n)}{\sigma\sqrt{n}} ;t\in[0,1] \left|\right. Z_n>0 \right\}\bigg)
			\\
			&\quad \Longrightarrow \mathcal{L}\left(\left\{\Lambda_t,\Lambda_t,\Lambda_t;t\in[0,1]\right\}\right)\quad \text{as}\quad n\rightarrow\infty.
		\end{align*} 
		Here $\Lambda_t:=\inf\{ W^+_s; s\in [t,1]\}$, 
		where $\left\{ W^+_t; t\in [0,1]\right\}$ is the Brownian meander.
	\end{theorem}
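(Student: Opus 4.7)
The plan is to reduce the joint convergence in $D([0,1];\R^3)$ to the already-established joint convergence of a two-component subvector together with two uniform-in-probability approximations. Since all three target coordinates coincide with $\Lambda_t$, it suffices to exhibit a pair $(A_n,B_n)\Rightarrow(\Lambda,\Lambda)$ jointly with the walk and then bring in the remaining coordinate via $\sup_t|C_n(t)-B_n(t)|\to 0$ in probability under $\P(\cdot\mid Z_n>0)$.

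First, by \Cref{Environment Convergence to Brownian meander}, $\{S_{\lfloor nt\rfloor}/(\sigma\sqrt{n})\}\Longrightarrow\{W^+_t\}$ in $D([0,1])$ under the conditioning. The functional $x\mapsto(\inf_{s\in[t,1]}x(s))_{t\in[0,1]}$ is continuous at continuous paths, so the continuous mapping theorem yields
\[
\left(\dfrac{S_{\lfloor nt\rfloor}}{\sigma\sqrt{n}},\;\dfrac{L(\lfloor nt\rfloor,n)}{\sigma\sqrt{n}}\right)_{t\in[0,1]}\Longrightarrow (W^+_t,\Lambda_t)_{t\in[0,1]}
\]
in $D([0,1];\R^2)$. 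This supplies the backbone of the joint convergence.

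Second, from \cref{mean of conditional reduced processes} one has
\[
\log E_\xi\!\left[Z(\lfloor nt\rfloor,n)\bigm| Z_n>0\right] = S_{\lfloor nt\rfloor} + \log P_\xi(\lfloor nt\rfloor,n) - \log P_\xi(0,n),
\]
and the core estimate I would establish is
\[
-\log P_\xi(k,n) = S_k - L(k,n) + o(\sqrt{n}),\qquad \text{uniformly in }0\leq k\leq n,
\]
in probability under $\P(\cdot\mid Z_n>0)$. This is to be derived from Agresti-type two-sided bounds on the quenched survival probability expressing $P_\xi(k,n)^{-1}$ via $\exp(-S_k)\bigl(\exp(-L(k,n))+\sum_{j>k}\eta_j\exp(-\min_{k\leq\ell\leq j}S_\ell)\bigr)$, combined with the fact that along a meander-like trajectory the valley contribution $\exp(-L(k,n))$ dominates the environmental sum, while $\log(1+\sum_k\eta_k)=o(\sqrt{n})$ with high probability by \Cref{lemma of condition}. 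Using also $L(0,n)/(\sigma\sqrt n)\to 0$ (because $\Lambda_0=0$ a.s., $W^+$ being positive on $(0,1]$), this approximation transforms the displayed identity into $\sup_{t}\bigl|\log E_\xi[Z(\lfloor nt\rfloor,n)|Z_n>0]-L(\lfloor nt\rfloor,n)\bigr|/(\sigma\sqrt{n})\to 0$ in probability.

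Third, given the environment and $Z_k$, the variable $Z(k,n)$ is a sum of $Z_k$ independent indicator-type contributions each surviving with quenched probability proportional to $P_\xi(k,n)$. A conditional second-moment computation, parallel to the one yielding \Cref{Reduce branching convergence} in \cite{Vatutin2002}, shows that $Z(k,n)/E_\xi[Z(k,n)\mid Z_n>0]$ is $O_\P(1)$ and bounded away from $0$ at the scale of interest, so $\sup_t\bigl|\log Z(\lfloor nt\rfloor,n)-\log E_\xi[Z(\lfloor nt\rfloor,n)|Z_n>0]\bigr|=O_\P(1)=o_\P(\sqrt{n})$. Combining Step 1 with Steps 2 and 3, and using that $(A_n,B_n)\Rightarrow(X,X)$ together with $\sup_t|C_n-B_n|\to 0$ in probability implies $(A_n,B_n,C_n)\Rightarrow(X,X,X)$, delivers the joint convergence claimed. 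The principal obstacle is Step 2: the Agresti lower bound is tight only when the deepest valley $L(k,n)$ dominates the environmental tail, and making this uniform in $k$ requires a careful truncation on the good event where $S/(\sigma\sqrt n)$ is close to its Brownian-meander limit and the variances $\eta_j$ are not pathologically large -- precisely the event whose complement is controlled by \Cref{lemma of condition}.
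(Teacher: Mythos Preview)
Your Steps~1 and~2 are essentially what the paper does: the environment convergence plus the continuous mapping gives the joint law of $(S,L)$, and Agresti's two-sided estimate together with \Cref{lemma of condition} and $L(0,n)/(\sigma\sqrt{n})\to 0$ yields the uniform approximation $\sup_t\bigl|\log E_\xi[Z(\lfloor nt\rfloor,n)\mid Z_n>0]-L(\lfloor nt\rfloor,n)\bigr|=o_\P(\sqrt{n})$. This matches the paper's Lemma~\ref{Sn convergence}.

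The gap is in Step~3. A second-moment (Chebyshev) bound on $Z(k,n)$ controls the \emph{upper} deviation of $Z(k,n)/E_\xi[Z(k,n)\mid Z_n>0]$, but it does not give the lower bound ``bounded away from $0$ with high probability''; Paley--Zygmund only yields a positive probability, and there is no quenched $L\log L$/Kesten--Stigum statement available here that would make the martingale ratio uniformly positive. Your conclusion $\sup_t\bigl|\log Z(\lfloor nt\rfloor,n)-\log E_\xi[Z(\lfloor nt\rfloor,n)\mid Z_n>0]\bigr|=O_\P(1)$ is therefore unjustified (and in fact stronger than needed).

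The paper closes this gap by a different route (Lemma~\ref{R_n convergence}). For a fixed $t$, the upper tail $\P(\chi_n(t)>\epsilon\mid Z_n>0)\to 0$ is handled via the binomial structure $Z(\lfloor nt\rfloor,n)\sim\mathrm{Bin}(Z_{\lfloor nt\rfloor},P_\xi(\lfloor nt\rfloor,n))$ and Markov's inequality, using also \Cref{Branching Convergence to Brownian meander}. For the lower tail the paper does \emph{not} estimate moments: it exploits that both $\log Z(\lfloor nt\rfloor,n)/(\sigma\sqrt{n})$ (by \Cref{Reduce branching convergence}) and $L(\lfloor nt\rfloor,n)/(\sigma\sqrt{n})$ converge in law to the \emph{same} limit $\Lambda_t$. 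Since the upper-tail bound already forces $\chi_n(t)\leqslant\epsilon$ with high probability, a squeezing/contradiction argument on the interval decomposition of the range of $L/(\sigma\sqrt{n})$ shows $\P(\chi_n(t)<-\epsilon\mid Z_n>0)\to 0$ as well. Finally, the pointwise statement is upgraded to the uniform one by using that both $k\mapsto Z(k,n)$ and $k\mapsto L(k,n)$ are monotone, so one only needs control at a fine but fixed grid of times, together with the continuity of $t\mapsto\Lambda_t$. You should replace your second-moment claim by this matching-of-marginals argument; the rest of your outline then goes through.
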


	\begin{remark}\label{reconstruce reduced branching processes}
		Condition on the survival events $\{Z_n>0\}$, 
		the reduced branching processes $\{Z(0,n),Z(1,n),\cdots,Z_(n,n)|Z_n>0\}$, 
		can be generated by following steps:
		\begin{itemize}
			\item Firstly, use $P_{\xi}(0,n)$ to bias the environment $\xi$, that is for any $A\subset \mathcal{P}(\N_0)^{\N}$ is a measurable set, define $ Q_n(A)= \dfrac{\P(P_{\xi}(0,n);A)}{\P(Z_n>0)}$, then $Q_n$ is a probability on $\mathcal{P}(\N_0)^{\N}$.
			According to law $ Q_n $, generate an environment $\xi$.
			\item Given the environment $\xi$,
			get the reduced branching mechanism 
			$\F^{r,n}(\xi):=\{F_1^{r.n}(\xi),\cdots,$ $F_k^{r,n}(\xi),\cdots,F_n^{r,n}(\xi)\}$ (see \cref{reduced branching mechanism}).
			\item Then, drive a branching process 
			according to the reduced branching mechanism $\F^{r,n}(\xi)$ 
			until time $n$, and the process has the same law as the reduced branching processes.
		\end{itemize}
	\end{remark}

	\begin{proof}[Proof of \Cref{Reduced convergence}]
		For $\epsilon>0$, define
		\begin{align*}
			&H_n(\epsilon):=
			\left\{
			\exists t\in [0,1],
			\left| \dfrac{\log E_{\xi}\left[ Z(\abs{nt},n) 
				\left|\right. Z_n>0 \right]   
				-L(\abs{nt},n)}
			{\sigma\sqrt{n}} \right|>\epsilon
			\right\}
			\nonumber\\
			&R_n(\epsilon):=
			\left\{\exists t\in [0,1],
			\left| \dfrac{\log Z\left(\abs{nt},n\right)-L(\abs{nt},n)}
			{\sigma\sqrt{n}} \right|>\epsilon
			\right\}
		\end{align*}
		
		\begin{lemma}\label{Sn convergence}
			For any $\epsilon>0$, the event $H_n(\epsilon)$, satisfies
			$ \lim\limits_{n\rightarrow\infty}
			\P\left(H_n(\epsilon)\left|\right.Z_n>0\right)
			=0 $
		\end{lemma}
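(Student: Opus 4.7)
The plan is to exploit the explicit identity (\ref{mean of conditional reduced processes}), namely $E_\xi[Z(k,n)|Z_n>0] = \exp(S_k)P_\xi(k,n)/P_\xi(0,n)$, to reduce everything to a quenched estimate for $\log P_\xi(k,n)$, and then to control the resulting error using \Cref{lemma of condition} together with the meander convergence (\Cref{Environment Convergence to Brownian meander}). Taking logarithms,
\begin{equation*}
\log E_\xi[Z(\lfloor nt\rfloor,n)|Z_n>0] - L(\lfloor nt\rfloor,n) = \bigl(\log P_\xi(\lfloor nt\rfloor,n) + S_{\lfloor nt\rfloor} - L(\lfloor nt\rfloor,n)\bigr) - \log P_\xi(0,n),
\end{equation*}
and since $S_0=0$, the second term on the right is the $t=0$ instance of the first, so it suffices to prove the uniform quenched asymptotic
\begin{equation*}
\max_{0\le k\le n}\bigl|\log P_\xi(k,n) + S_k - L(k,n)\bigr| = o(\sqrt n) \quad \text{in }\P(\cdot\mid Z_n>0)\text{-probability.}
\end{equation*}

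For this I would invoke the classical Geiger-Kersting two-sided bound for critical BPREs, which, after shifting the environment at time $k$, yields constants $c_1,c_2>0$ with
\begin{equation*}
\frac{c_1}{1+\sum_{j=k}^{n-1}\eta_{j+1}\exp\bigl(S_j-S_k-L(k,n)\bigr)} \le \frac{P_\xi(k,n)}{\exp\bigl(L(k,n)-S_k\bigr)} \le c_2.
\end{equation*}
Taking logs, the error in the previous display is at most $\log\bigl(1+\sum_j\eta_{j+1}\exp(\cdots)\bigr)+O(1)$. Splitting the inner sum according to whether $S_j-S_k-L(k,n)\le \varepsilon\sqrt n$ or not, and using \Cref{Environment Convergence to Brownian meander} together with the non-negativity of the Brownian meander to discard the contribution of the (asymptotically vanishing) set where $S_j$ is far above $L(k,n)$, one bounds the error by $\log\bigl(1+\sum_j\eta_j\bigr)+O(\varepsilon\sqrt n)+O(1)$. \Cref{lemma of condition} then makes the first summand $o(\sqrt n)$, and since $\varepsilon>0$ is arbitrary the displayed $o(\sqrt n)$ statement follows.

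Putting the two pieces together produces
\begin{equation*}
\log E_\xi[Z(\lfloor nt\rfloor,n)|Z_n>0] - L(\lfloor nt\rfloor,n) = -L_n + o_\P(\sqrt n)\text{ uniformly in }t\in[0,1].
\end{equation*}
Because the Brownian meander is non-negative on $[0,1]$, \Cref{Environment Convergence to Brownian meander} gives $L_n/\sqrt n\to 0$ in $\P(\cdot\mid Z_n>0)$-probability; feeding this back into the displayed identity finishes the proof.

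The main obstacle is the sharpening of the Geiger-Kersting remainder into a bound of the form $\log\bigl(1+\sum_j\eta_j\bigr)+o(\sqrt n)$ uniformly in $k$. A naive estimate gives a remainder of full order $\sqrt n$ through the $\exp\bigl(S_j-L(k,n)\bigr)$ factor; the gain is that under the survival conditioning the rescaled walk converges to a meander that stays close to its running minimum often enough to make the exponentially weighted sum negligible on log scale. This is the standard but technical step in the BPRE literature, and once it is in place the rest is routine.
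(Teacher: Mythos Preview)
Your overall strategy---reduce to a two-sided estimate on $\log P_\xi(k,n)-(L(k,n)-S_k)$ and then invoke \Cref{lemma of condition} together with \Cref{Environment Convergence to Brownian meander}---is exactly the paper's, but the ``main obstacle'' you flag is self-inflicted. The exponent in your Geiger--Kersting lower bound is written with the wrong sign: the Agresti-type inequality (after shifting to start at $k$) reads
\[
\frac{1}{P_\xi(k,n)}\;\le\; C_0+\sum_{j=k}^{n-1}\eta_{j+1}\,e^{-(S_j-S_k)},
\]
so that, after multiplying by $e^{L(k,n)-S_k}$, the weights become $e^{L(k,n)-S_j}$, not $e^{S_j-S_k-L(k,n)}$. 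Since $L(k,n)\le S_j$ for every $j\in[k,n]$, each such weight is $\le 1$, and the denominator collapses immediately to $1+\sum_{j=k+1}^n\eta_j$. There is nothing to split, and the meander argument you sketch to ``discard the contribution of the set where $S_j$ is far above $L(k,n)$'' is unnecessary (and in fact would not work as stated, because under the survival conditioning $S_j-L(k,n)$ is typically of exact order $\sqrt n$, not $o(\sqrt n)$).

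The paper carries this out in one line: combining $\log P_\xi(k,n)\le L(k,n)-S_k$ with the crude Agresti bound $\log P_\xi(k,n)\ge L(k,n)-S_k-\log\bigl(1+\sum_{j=k+1}^n\eta_j\bigr)$ gives, uniformly in $t$,
\[
\Bigl|\log E_\xi[Z(\lfloor nt\rfloor,n)\mid Z_n>0]-L(\lfloor nt\rfloor,n)\Bigr|
\;\le\;|L(0,n)|+\log\Bigl(1+\sum_{k=1}^n\eta_k\Bigr),
\]
and the two terms on the right are handled by \Cref{Environment Convergence to Brownian meander} and \Cref{lemma of condition} respectively. Once you correct the sign in your bound, your argument becomes this one.
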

		
		\begin{proof}[Proof of \cref{Sn convergence}]
			Note that for any $k\leqslant l\leqslant n$, 
			\begin{align*}
				P_{\xi}(k,n)\leqslant P_{\xi}(k,l)
				\leqslant E_{\xi}(Z_l|Z_k=1) = \exp(S_l-S_k).
			\end{align*}
			Hence, we have
			\begin{align}\label{Prob Upper}
				\log P_{\xi}(k,n)\leqslant \min\{S_l-S_k; k\leqslant l\leqslant n\} = L(k,n)-S_k.
			\end{align}
			and due to Agresti?s estimate (see the Section 2 in \cite{Geiger2000}, or eq (3.4) in \cite{Afanasyev2005}), we have
			\begin{equation}\label{Prob lower}
				\log P_{\xi}(k,n)
				\geqslant
				L(k,n)-S_k-\log\left(1+\sum_{j=k+1}^{n}\eta_j\right)
			\end{equation}
			Combine \cref{Prob Upper,Prob lower} and $S_0=0$,
			\begin{align*}
				-L(0,n)-\log\left(1+\sum_{j=1}^n \eta_j\right).
				&\leqslant
				-\log P_{\xi}(0,n)-\log\left(1+\sum_{j=k+1}^n\eta_j\right)
				\nonumber\\
				&\leqslant
				-\log P_\xi(0,n)+\log P_\xi(k,n)+S_k-L(k,n)
				\nonumber\\
				&=
				\log E_{\xi}\left[ Z(\abs{nt},n) \left|\right. Z_n>0 \right] -L(k,n)
				\\
				&\leqslant
				-\log P_\xi(0,n)
				\leqslant
				-L(0,n)+\log\left(1+\sum_{j=1}^n \eta_j\right).
			\end{align*}
			Note that $L(0,n)\leqslant S_0=0$, 
			and $\log(1+\sum\limits_{j=1}^n \eta_j)\geqslant0$, therefore
			\begin{align*}
				\left| \dfrac{\log E_{\xi}\left[Z\left(\abs{nt},n\right)
					\left|\right.Z_n>0\right]}{\sigma\sqrt{n}}-
				\dfrac{L(\abs{nt},n)}
				{\sigma\sqrt{n}} \right|
				\leqslant
				\dfrac{\left|L(0,n)\right|+\log\left(1+\sum\limits_{k=1}^n\eta_k\right)}
				{\sigma\sqrt{n}}
			\end{align*}
			Hence,
			\begin{align*}
				H_n(\epsilon)
				&\subset
				\left\{
				\dfrac{\left|L(0,n)\right|+\log\left(1+\sum\limits_{k=1}^n\eta_k\right)}
				{\sigma\sqrt{n}}>\epsilon
				\right\}
				\\
				&\subset
				\left\{
				\left|L(0,n)\right|\geqslant \dfrac{\epsilon\sigma\sqrt{n}}{2}
				\right\}
				\cup
				\left\{
				\log\left(1+\sum\limits_{k=1}^n\eta_k\right)>
				\dfrac{\epsilon\sigma\sqrt{n}}{2}
				\right\}
			\end{align*}
			
			According to \Cref{Environment Convergence to Brownian meander} and \Cref{lemma of condition}, then \Cref{Sn convergence} is proved.
		\end{proof}
		
		\begin{lemma}\label{R_n convergence}
			For any $\epsilon>0$, the event $R_n(\epsilon)$, satisfies
			$ \lim\limits_{n\rightarrow\infty}
			\P\left(R_n(\epsilon)\left|\right.Z_n>0\right)
			= 0. $
		\end{lemma}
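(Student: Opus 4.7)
The plan is to decompose, via the quenched conditional mean $M_k := E_\xi[Z(k,n) \mid Z_n>0] = P_\xi(k,n)\,e^{S_k}/P_\xi(0,n)$ and the ratio martingale $W_k := Z(k,n)/M_k$,
\begin{equation*}
\log Z(\abs{nt}, n) - L(\abs{nt}, n) \;=\; \log W_{\abs{nt}} \;+\; \bigl[\log M_{\abs{nt}} - L(\abs{nt}, n)\bigr].
\end{equation*}
The second bracket is $o(\sigma\sqrt{n})$ uniformly in $t$ in $\P(\cdot\mid Z_n>0)$-probability by \Cref{Sn convergence}, so it suffices to prove $\max_{0\leqslant k\leqslant n}\lvert\log W_k\rvert = o(\sigma\sqrt{n})$ in probability. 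By the construction in \Cref{reconstruce reduced branching processes}, under the $P_\xi(0,n)$-biased environment law the reduced process $(Z(k,n))$ is a quenched time-inhomogeneous Galton--Watson process with step means $\overline{F}_k^{r,n}$, so $W_k$ is a nonnegative quenched martingale with $W_0 = 1$.

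The upper tail then follows deterministically in $\xi$ from Doob's $L^1$-maximal inequality,
\begin{equation*}
P_\xi\Bigl(\max_{0\leqslant k\leqslant n} W_k > e^{\epsilon\sigma\sqrt{n}} \,\Big|\, Z_n > 0\Bigr) \leqslant e^{-\epsilon\sigma\sqrt{n}} \to 0.
\end{equation*}
For the lower tail, the trivial bound $Z(k,n)\geqslant 1$ on $\{Z_n>0\}$ yields $\log W_k \geqslant -\log M_k$, so $|\log W_k| \leqslant \epsilon\sigma\sqrt{n}$ is automatic on those indices with $\log M_k \leqslant \epsilon\sigma\sqrt{n}$. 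On the complementary ``large $M_k$'' set I will invoke the variance decomposition for a time-inhomogeneous Galton--Watson process, which, using $\tilde F_j^{r,n} = P_\xi(j-1,n)\eta_j$ and the explicit form of $M_j$, collapses to
\begin{equation*}
\frac{\operatorname{Var}_\xi\!\bigl(Z(k,n)\mid Z_n>0\bigr)}{M_k^2} \;\leqslant\; \sum_{j=1}^{k} \frac{\tilde F_j^{r,n}}{M_{j-1}} \;=\; P_\xi(0,n)\sum_{j=1}^{k} \eta_j\, e^{-S_{j-1}}.
\end{equation*}
Combining the asymptotic $P_\xi(0,n) = O(1/\sqrt{n})$ on $\{Z_n>0\}$ with Agresti--Afanasyev--Geiger--Kersting type bounds on $\sum_j \eta_j e^{-S_{j-1}}$ under the Doob $h$-transform for the random walk conditioned to stay nonnegative (Section~2 of \cite{Afanasyev2005}) and with \Cref{lemma of condition}, this variance ratio tends to $0$ in $\P(\cdot\mid Z_n>0)$-probability. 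A Chebyshev / Paley--Zygmund step then pins $W_k$ close to $1$ uniformly over the large-$M_k$ regime, and a discretization in $k$ upgrades pointwise concentration to the supremum.

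The principal obstacle is the lower tail: making the second-moment bound uniform in $k$ requires controlling the sum $\sum_j \eta_j e^{-S_{j-1}}$ jointly along the conditioned trajectory, which is precisely where the $h$-transform machinery of \cite{Afanasyev2005} enters. Once this technical step is settled, the Doob upper bound and the trivial small-$M_k$ slab close the argument.
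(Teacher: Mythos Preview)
Your decomposition and the Doob $L^1$-maximal bound for the upper tail of $W_k$ are fine (and in fact cleaner than the paper's binomial argument for $\{\chi_n(t)>\epsilon\}$). The gap is in the lower tail. The claim ``$P_\xi(0,n)=O(1/\sqrt n)$ on $\{Z_n>0\}$'' is false: under $\P(\,\cdot\mid Z_n>0)$ the environment is \emph{size-biased} by $P_\xi(0,n)$, and under the associated $\P^+$ one has $P_\xi(0,n)\to P_\xi(0,\infty)>0$ a.s.\ (Proposition~3.1 in \cite{Afanasyev2005}). Concretely, from \cref{Prob Upper,Prob lower} one only gets $\log P_\xi(0,n)=L(0,n)+o(\sqrt n)=o(\sqrt n)$, not decay like $n^{-1/2}$. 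Worse, in the linear--fractional case the Agresti identity gives exactly
\[
P_\xi(0,n)\sum_{j=1}^{n}\eta_j\,e^{-S_{j-1}} \;=\; 1-P_\xi(0,n)\,e^{-S_n}\ \longrightarrow\ 1
\]
under $\P(\,\cdot\mid Z_n>0)$ (since $S_n\asymp\sigma\sqrt n\,W_1^+$), and in general your variance bound is sandwiched between positive constants times this quantity. So $\operatorname{Var}_\xi(W_k)$ stays of order $1$; Chebyshev/Paley--Zygmund then only yields $P_\xi(W_k\le\theta)\le C/(1+C)$, which does not tend to $0$. This is consistent with the Yaglom phenomenon: $W_n=Z_n\,P_\xi(0,n)e^{-S_n}$ has a genuinely nondegenerate limit law, so no second-moment argument can force $W_k\to 1$.

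The paper does not attempt a direct lower-tail bound on $W_k$. Instead it \emph{imports} Vatutin's marginal convergence $\log Z(\lfloor nt\rfloor,n)/(\sigma\sqrt n)\Rightarrow\Lambda_t$ (Theorem~\ref{Reduce branching convergence}) together with $L(\lfloor nt\rfloor,n)/(\sigma\sqrt n)\Rightarrow\Lambda_t$, proves the one-sided bound $\P(\chi_n(t)>\epsilon\mid Z_n>0)\to 0$ by a binomial/Markov step, and then runs a distributional sandwich: two sequences with the \emph{same} weak limit, one-sided comparison forces the other side as well (the slicing argument over $\{L/\sigma\sqrt n\in[\gamma+(k-1)\epsilon,\gamma+k\epsilon)\}$). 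Finally monotonicity of $k\mapsto Z(k,n)$ and $k\mapsto L(k,n)$ upgrades fixed $t$ to the supremum. If you want to avoid citing \cite{Vatutin2002}, you would need a genuinely different lower-tail input for $W_k$ (e.g.\ a uniform small-ball estimate for the nonnegative martingale, not a second-moment bound); the variance route as written cannot close.
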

		
		\begin{proof}[Proof of \Cref{R_n convergence}]
			Denote $\chi_n(t):= \dfrac{\log
				Z(\abs{nt},n)-L(\abs{nt},n)}{\sigma\sqrt{n}} $
			
			Firstly, we will prove that for any fixed $t\in [0,1]$, 
			\begin{equation}\label{Fixed t}
				\lim\limits_{n\rightarrow\infty}
				\P\left(|\chi_n(t)|>\epsilon|Z_n>0\right)=0.
			\end{equation}
			
			Define:
			\begin{align*}
				A_n:=
				\left\{
				\exp\left(S_{\abs{nt}}-\frac{\epsilon}{2}\sigma\sqrt{n}\right)
				\leqslant Z_{\abs{nt}}\leqslant 
				\exp\left(S_{\abs{nt}}+\frac{\epsilon}{2}\sigma\sqrt{n}\right)
				\right\}
			\end{align*}
			Due to \eqref{Prob Upper},
			on the event $A_n\cap \{\chi_n(t)>\epsilon\}$,
			\begin{align*}
				Z\left(\abs{nt},n\right)
				&> \exp\left(L(\abs{nt},n)+\epsilon\sigma\sqrt{n}\right)
				\geqslant
				Z_{\abs{nt}}P_{\xi}(\abs{nt},n)
				\exp\left(\frac{\epsilon\sigma\sqrt{n}}{2}\right)
			\end{align*}
			Then,
			\begin{align*}
				\P\left(
				\ind{\chi_{n}>\epsilon}\ind{A_n}|Z_n>0\right)\leqslant
				\P\left(A_n\cap C_n|Z_n>0\right)
			\end{align*}
			Here $ C_n:=\{ Z\left(\abs{nt},n\right)
			>Z_{\abs{nt}}P_{\xi}(\abs{nt},n)
			\exp\left(\frac{\epsilon\sigma\sqrt{n}}{2}\right) \}$.
			
			Note that, condition on $\mathcal{F}_{\abs{nt}}$ 
			(here $\mathcal{F}_k$ denote as the $\sigma$ algebra generated by total
			environment and the branching processes up to time $k$),
			$Z(\abs{nt},t)$ can be view as the binomial distribution with 
			parameter $\left(Z_{\abs{nt}}, P_{\xi}(\abs{nt},n)\right)$.
			So,
			\begin{align*}
				\P
				\left(
				\ind{A_n}\ind{C_n}\ind{Z_n>0}
				\right)
				&=
				\P\left(\P\left(
				\ind{A_n}\ind{C_n}
				\ind{Z_n>0}
				|\mathcal{F}_{\abs{nt}}\right)\right)
				\\
				&=
				\P\left( \ind{A_n} \P\left(
				\ind{C_n}
				|\mathcal{F}_{\abs{nt}}\right)\right)
				\\
				&\leqslant
				\P\left(
				\ind{A_n}
				\exp\left(-\frac{\epsilon\sigma\sqrt{n}}{2}\right)
				\right)
				\leqslant
				\exp\left(-\frac{\epsilon\sigma\sqrt{n}}{2}\right)
			\end{align*}
			And 
			due to \Cref{Extinction Prob Decay},
			and $\lim\limits_{n\rightarrow\infty}
			\sqrt{n}\exp\left(-\frac{\epsilon\sigma\sqrt{n}}{2}\right)=0$, 
			thus
			\begin{equation*}
				\lim\limits_{n\rightarrow\infty} \P(A_n\cap C_n|Z_n>0)=0
			\end{equation*}
			
			According to \Cref{Branching Convergence to Brownian meander}, we know
			$ \lim\limits_{n\rightarrow\infty}\P(A_n|Z_n>0)=1.$
			Therefore, for any $\epsilon>0$,
			\begin{equation}\label{Upper is empty}
				\lim\limits_{n\rightarrow\infty}\P(\chi_n(t)>\epsilon|Z_n>0)=0.
			\end{equation}
			
			According to \Cref{Environment Convergence to Brownian meander} and \Cref{Reduce branching convergence}, for any $x>0$,
			\begin{align}
				\lim\limits_{n\rightarrow\infty}
				\P&\left(
				\dfrac{\log Z\left(\abs{nt},n\right)}{\sigma\sqrt{n}}>x|Z_n>0
				\right)
				=
				\lim\limits_{n\rightarrow\infty}
				\P\left(
				\chi_n(t)
				+
				\dfrac{L(\abs{nt},n)}{\sigma\sqrt{n}}
				>x
				|Z_n>0\right)
				\nonumber\\
				&\quad\quad\quad\quad\quad\quad\quad\quad=
				\lim\limits_{n\rightarrow\infty}
				\P\left(
				\dfrac{L(\abs{nt},n)}{\sigma\sqrt{n}}
				>x
				|Z_n>0\right)
				=
				\P\left(
				\Lambda_t>x
				\right).\label{same limit}
			\end{align}
			Here $\Lambda_t:=\inf\{ W^+_s; s\in [t,1]\}$, 
			where $\left(W^+_t; t\in [0,1]\right)$ is a Brownian meander.
			
			Combine \cref{Upper is empty,same limit}, we will prove 
			$ \lim\limits_{n\rightarrow\infty} 
			\P\left(\chi_n(t)<-\epsilon|Z_n>0\right)=0 $.

			Firstly, for any $\delta>0$,
			there exist $\gamma>0$ and $K$, such that for $n$ large enough,
			\begin{align}
				\P\left(
				\dfrac{L(\abs{nt},n)}{\sigma\sqrt{n}}<\gamma
				|Z_n>0
				\right)
				+
				\P\left(
				\dfrac{L(\abs{nt},n)}{\sigma\sqrt{n}}>\gamma+K\epsilon
				|Z_n>0
				\right)<\delta.\label{63}
			\end{align}
			Note that
			\begin{align}
				\P\left(\chi_n(t)\right.&\left.<-\epsilon|Z_n>0\right)
				=
				\P\left(
				\chi_n(t)<-\epsilon,
				\dfrac{L(\abs{nt},n)}{\sigma\sqrt{n}}<\gamma
				|Z_n>0
				\right)
				\nonumber\\
				&+
				\P\left(
				\chi_n(t)<-\epsilon,
				\dfrac{L(\abs{nt},n)}{\sigma\sqrt{n}}\geqslant\gamma+K\epsilon
				|Z_n>0
				\right)
				\nonumber\\
				&+
				\sum_{k=1}^K
				\P\left(
				\chi_n(t)<-\epsilon,
				\dfrac{L(\abs{nt},n)}{\sigma\sqrt{n}}\in 
				\left[ \gamma+(k-1)\epsilon,\gamma+k\epsilon \right)
				|Z_n>0
				\right)\label{64}
			\end{align}
			
			For any $y>\epsilon$, and $\epsilon_1>0$,
			\begin{align*}
				&~\quad\P\left(
				\chi_n(t)
				+
				\dfrac{L(\abs{nt},n)}{\sigma\sqrt{n}}
				>y
				|Z_n>0\right)
				\nonumber\\
				&=
				\P\left(
				\chi_n(t)
				+
				\dfrac{L(\abs{nt},n)}{\sigma\sqrt{n}}
				>y;\chi_n(t)<-\epsilon
				|Z_n>0\right)
				\nonumber\\
				&+
				\P\left(
				\chi_n(t)
				+
				\dfrac{L(\abs{nt},n)}{\sigma\sqrt{n}}
				>y;\chi_n(t)\in[-\epsilon,\epsilon_1]
				|Z_n>0\right)
				\nonumber\\
				&+
				\P\left(
				\chi_n(t)
				+
				\dfrac{L(\abs{nt},n)}{\sigma\sqrt{n}}
				>y;\chi_n(t)>\epsilon_1
				|Z_n>0\right)
				\nonumber\\
				&\leqslant
				\P\left(
				\dfrac{L(\abs{nt},n)}{\sigma\sqrt{n}}
				>y+\epsilon;\chi_n(t)<-\epsilon
				|Z_n>0\right)
				\nonumber\\
				&+
				\P\left(\dfrac{L(\abs{nt},n)}{\sigma\sqrt{n}}
				>y-\epsilon_1,
				\chi_n(t)\geqslant-\epsilon_1
				|Z_n>0 \right)
				\nonumber\\
				&+
				\P\left(
				\chi_n(t)>\epsilon_1
				|Z_n>0\right)
			\end{align*}
			Consequently,
			\begin{align}
				&\P\left(
				\chi_n(t) +\dfrac{L(\abs{nt},n)}{\sigma\sqrt{n}}
				>y |Z_n>0\right)
				\leqslant
				\P\left( \dfrac{L(\abs{nt},n)}{\sigma\sqrt{n}} >y-\epsilon_1|Z_n>0\right)
				\nonumber\\
				&\quad\quad\quad\quad\quad\quad\quad\quad\quad-\P\left(\dfrac{L(\abs{nt},n)}{\sigma\sqrt{n}}\in [y,y+\epsilon],\chi_n(t)<-\epsilon |Z_n>0 \right)
				\nonumber\\
				&\quad\quad\quad\quad\quad\quad\quad\quad\quad
				+\P\left(
				\chi_n(t)>\epsilon_1
				|Z_n>0\right)\label{take limit}
			\end{align}
			
			Let $n_k$, be any increasing sequence of integers such that the limit
			\[
			c=
			\lim\limits_{k\rightarrow\infty}
			\P\left(\dfrac{L(\abs{n_kt},n_k)}{\sigma\sqrt{n_k}}
			\in [y,y+\epsilon],
			\chi_{n_k}(t)<-\epsilon
			|Z_{n_k}>0 \right)
			\]
			exists. We will show $c=0$. Indeed,
			passing to the limit, as $n_k\rightarrow\infty$ 
			in both sides of inequality \cref{take limit},
			together with 
			\cref{Upper is empty,same limit},
			then
			\begin{equation*}
				\P\left(
				\Lambda_t>y
				\right)
				\leqslant
				\P\left(
				\Lambda_t>y-\epsilon_1
				\right)-c
			\end{equation*}
			let $\epsilon_1$ tend to $0$, then
			\[
			\P\left(
			\Lambda_t>y
			\right)
			\leqslant
			\P\left(
			\Lambda_t>y \right)-c
			\]
			What's more, $c$ is non-negative. Then $c=0$.
			Due to the abriatily of $n_k$, we know
			\begin{equation}
				\lim\limits_{n\rightarrow\infty}
				\P\left(\dfrac{L(\abs{nt},n)}{\sigma\sqrt{n}}
				\in [y,y+\epsilon],
				\chi_{n}(t)<-\epsilon
				|Z_{n}>0 \right) = 0.
			\end{equation}
			Thus, for each $k$,
			\begin{equation}\label{67}
				\lim\limits_{n\rightarrow\infty}
				\P\left(
				\chi_n(t)<-\epsilon,
				\dfrac{L(\abs{nt},n)}{\sigma\sqrt{n}}\in 
				\left[ \gamma+(k-1)\epsilon,\gamma+k\epsilon \right)
				|Z_n>0
				\right)=0.
			\end{equation}
			Then, combine \cref{63,64,67}, we know
			$ \lim\limits_{n\rightarrow\infty}
			\P
			\left(
			\chi_{n}(t)<-\epsilon
			|Z_n>0 \right)\leqslant \delta. $
			Due to the arbitrariness of $\delta$,
			\begin{equation*}
				\lim\limits_{n\rightarrow\infty}
				\P
				\left(
				\chi_{n}(t)<-\epsilon
				|Z_n>0 \right) = 0.
			\end{equation*}
			Hence \cref{Fixed t} is proved.
			
			Fix any $m\in \N$, define
			\begin{align*}
				D_m(\epsilon,n):=
				&\bigcup\limits_{k=1}^m
				\left\{\left|
				\dfrac{\log Z(\abs{n\frac{k}{m}},n)-L(\abs{n\frac{k-1}{m}},n)}{\sigma\sqrt{n}}
				\right|>\epsilon
				\right\}
				\\
				&\quad\quad\quad\quad\bigcup\limits_{k=1}^m \left\{ \left| \dfrac{\log Z(\abs{n\frac{k-1}{m}},n)-L(\abs{n\frac{k}{m}},n)}{\sigma\sqrt{n}}
				\right|>\epsilon \right\}
			\end{align*}
			According to the monotonicity of $Z(k,n)$ and $L(k,n)$, 
			so $ R_n(\epsilon)\subset D_m(\epsilon,n) $.
			For \cref{Fixed t}, take $t=0,\frac{1}{m},\cdots,\frac{m}{m}$, combine with \Cref{Branching Convergence to Brownian meander}, we know
			\begin{align}
				\lim\limits_{n\rightarrow\infty}
				\P\left( D_m(\epsilon,n) \right)
				=
				\P\left(
				\bigcup\limits_{k=1}^m 
				\left\{ \Lambda_{\frac{k}{m}} -\Lambda_{\frac{k-1}{m}}>\epsilon \right\}
				\right)\label{70}
			\end{align}
			Here $\Lambda_t:=\inf\{ W^+_s; s\in [t,1]\}$, 
			where $\left(W^+_t; t\in [0,1]\right)$ is a Brownian meander.
			
			Note that the path of Brownian meander is continuous, then
			\begin{align}
				\lim\limits_{m\rightarrow\infty}
				\P\left(
				\bigcup\limits_{k=1}^m 
				\left\{ \Lambda_{\frac{k}{m}} -\Lambda_{\frac{k-1}{m}}>\epsilon \right\}
				\right)=0.\label{71}
			\end{align}
			Combine \cref{70,71}, $R_n(\epsilon)\subset D_m(\epsilon,n) $ and the arbitrarily of $m$,
			\begin{equation*}
				\lim\limits_{n\rightarrow\infty}
				\P\left(R_n(\epsilon)|Z_n>0\right)
				=0.
			\end{equation*}
			
			The proof of \Cref{R_n convergence} is over.
		\end{proof}
		
		According to \Cref{lemma of condition}, \Cref{Sn convergence}  and  \Cref{R_n convergence}, together with \Cref{Reduce branching convergence}, then \Cref{Reduced convergence} is proved.
	\end{proof}

	\section{Proof of Theorems}\label{Sec4}
	In this section, we will prove the \Cref{Conditional limit theorem} and \Cref{Tail prob of M}.
	
	\subsection{Proof of \Cref{Conditional limit theorem}}\label{Sec4.1}
	In this subsection, 
	We will use the coupling methods to prove 
	\Cref{Conditional limit theorem}.
	Due to \cref{reduce mean and sstand-var}, \Cref{lemma of condition}, \Cref{Reduced convergence} 
	and \Cref{reconstruce reduced branching processes},
	use the Skorohod representation theorem,
	we can construct a probability space 
	$\left( \Omega,\mathcal{F}, \Q \right)$, 
	such that
	\begin{itemize}
		\item There is a Brownian meander $\left\{ W^+_t;t\in [0,1] \right\}$, 
		and let $\Lambda_t:=\inf\{ W^+_s; s\in [t,1]\}$.
		\item For each $n$, there exist an environment $\xi_n = \left(F_1^n,F_2^n,\cdots\right)\in \mathcal{P}(\N_0)^{\N}$, 
		and the law of $\xi_n$ is just $Q_n$ (that is $\mathcal{L}(\xi|Z_n>0)$, see \Cref{reconstruce reduced branching processes}).
		And for each $\xi_n$, according to formula \cref{reduced branching mechanism}, we can get the reduced branching mechanism
		$ \F^{r,n} $.
		\item Let $X_k^n:=\overline{F}_k^n$ (see \cref{reduce mean and sstand-var}), $S_0^n:=0 $ 
		and $ S_k^n:=S_{k-1}^n+X_k^n $, in fact we have $S_k^n = \log E_{\xi}[Z(k,n)|Z_n>0] $ (see \cref{mean of conditional reduced processes}), according to \Cref{Reduced convergence}, use Skorohod representation theorem, we have
		\begin{equation}\label{One of A2}
			\lim\limits_{n\rightarrow\infty}
			\dfrac{S^n_{\abs{nt}}}{\sigma\sqrt{n}}= \Lambda_t, \quad \Q \quad a.s.
		\end{equation}
		\item Due to \cref{reduce mean and sstand-var}, we know $\tilde{F}^{r,n}_k \leqslant \eta_k^n = \tilde{F}_k^n $, and according to \Cref{lemma of condition}, use Skorohod representation theorem, we have
		\begin{equation}\label{Two of A2}
			\lim\limits_{n\rightarrow\infty}
			\dfrac{1}{\sigma\sqrt{n}}\log\left(1+\sum_{k=1}^n\tilde{F}_k^{r,n}\right)
			=0,\quad \Q \quad a.s..
		\end{equation}
		\item Conditional on all above, for each $n$, 
		there exist a branching random walk $(\T^n,V^n)$, 
		whose branching mechanism is just $\F^{r,n}$ 
		and the jump distribution is just $\ND(0,1)$.
		Let $M_n^n$ 
		be the maximal displacement of the branching random walk at time $n$.
		\item Use $Z_k^n$ denote the number of particles
		survived at generation $k$, according to \Cref{Reduced convergence}, use Skorohod representation theorem, we have for each $t\in [0,1]$, 
		\begin{equation}\label{One of A5}
			\lim\limits_{n\rightarrow\infty}
			\dfrac{\log Z^n_{\abs{nt}}}{\sigma\sqrt{n}}= \Lambda_t,\quad \Q\quad a.s..
		\end{equation}
	\end{itemize}
	
	Define $\mathcal{G}:= \sigma\{W_t^+;t\in[0,1]\}
	\cup\{\xi_n;n\geqslant 0\}$ and
	$A_\Lambda:=\sup\{g(1):\int_{0}^{r}\frac{1}{2}g^{\prime}(s)^2\d s\leqslant \Lambda_r\}$. Then  $A_\Lambda$ is $\mathcal{G}$ measurable, and $\mathcal{G}$ is the $\sigma$-field generated be the environment.
	
	In order to use the \Cref{Max Dis of TIBRW}, we should verify condition on $\mathcal{G}$, the \Cref{A2} and \Cref{A5} are satisfied for above time-inhomogeneous branching random walk.
	
	In fact, take $a_n = \sigma\sqrt{n}$ and $ f(t)= \Lambda_t$, 
	\cref{One of A2,Two of A2} ensure the \Cref{A2} is satisfied almost surely, and \cref{One of A5} ensures the \Cref{A5} is satisfied.
	Then, according to \Cref{Max Dis of TIBRW}, for any $\epsilon>0$, we have
	\begin{equation*}
		\lim\limits_{n\rightarrow\infty}
		\Q
		\left(\left|\dfrac{M_n^n}{\sqrt{\sigma}n^{\frac{3}{4}}}-A_\Lambda(\omega)\right|>\epsilon|\mathcal{G}\right)
		= 0, \quad \Q \quad a.s..
	\end{equation*}
	According to dominated convergence theorem, we have
	\begin{align*}
		\lim\limits_{n\rightarrow\infty}
		\Q\left( \left|\dfrac{M_n^n}{\sqrt{\sigma}n^{\frac{3}{4}}} -A_\Lambda(\omega)\right|>\epsilon \right)
		=\Q
		\left( \lim\limits_{n\rightarrow\infty} 
		\Q\left( \left|\dfrac{M_n^n}{\sqrt{\sigma}n^{\frac{3}{4}}} -A_\Lambda(\omega)\right|>\epsilon |\mathcal{G}\right)
		\right)
		=0.
	\end{align*}
	Due to the arbitrariness of $\epsilon$, we know $ \dfrac{M^n_n}{\sqrt{\sigma}n^{\frac{3}{4}}} \Longrightarrow A_\Lambda $ under $\Q$.
	
	Note that the law of $M_n^n$ under $\Q$ has is exactly $\mathcal{L}\left(M_n^n|Z_n>0\right)$.
	Combine above relationship, we have
	\begin{equation*}
		\mathcal{L}\left( \dfrac{M_n^n}{\sqrt{\sigma}n^{\frac{3}{4}}}\big|Z_n>0\right) 
		\Longrightarrow \mathcal{L}(A_\Lambda),\quad \text{as}\quad n\rightarrow\infty.
	\end{equation*}
	
	Hence \Cref{Conditional limit theorem} is proved.
	
	\subsection{Proof of \Cref{Tail prob of M}}\label{Section4.2}
	In this subsection, we will prove \Cref{Tail prob of M}.
	During this subsection, $K$ represent the same constant in \cref{Extinction Prob Decay}.
	Firstly, according to \Cref{Conditional limit theorem}, we know
	\begin{align*}
		\liminf\limits_{n\rightarrow\infty}\sqrt{n}\P(M>n^{\frac{3}{4}}) 
		&\geqslant
		\liminf\limits_{n\rightarrow\infty}\sqrt{n}
		\P(M_n>n^{\frac{3}{4}},Z_n>0)
		\nonumber\\
		&\geqslant
		\liminf\limits_{n\rightarrow\infty}\sqrt{n} \P\left(Z_n>0\right)
		\P\left( M_n>n^{\frac{3}{4}} \left|\right.Z_n>0\right)
		\nonumber\\
		&\geqslant K \P\left(\sqrt{\sigma}A_\Lambda>1\right).
	\end{align*}
	
	\begin{lemma}\label{Prop of AM}
		About $ A_\Lambda$, for any $x>0$,
		\begin{equation*}
			\P(A_\Lambda\geqslant x) \geqslant \P\left( \Lambda_{\frac{1}{2}} \geqslant x^2 \right) > 0.
		\end{equation*}
		Here $\Lambda_t:=\inf\{ W^+_s; s\in [t,1]\}$, 
		where $\left(W^+_t; t\in [0,1]\right)$ is a Brownian meander.
	\end{lemma}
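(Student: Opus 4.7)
The plan is to exhibit an explicit admissible path $g_x$ in the variational problem defining $A_\Lambda$ that attains the value $x$ on the event $\{\Lambda_{1/2}\geqslant x^2\}$, and then observe that this event has positive probability by basic properties of the Brownian meander.

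First, for fixed $x>0$, define $g_x\in C_0([0,1])$ by
\begin{equation*}
g_x(s) := \begin{cases} 0, & s\in[0,1/2], \\ 2x\bigl(s-\tfrac{1}{2}\bigr), & s\in[1/2,1]. \end{cases}
\end{equation*}
Then $g_x(1)=x$ and $g_x\in H_1$, with $g_x'\equiv 0$ on $[0,1/2)$ and $g_x'\equiv 2x$ on $(1/2,1]$. A direct computation gives $\int_0^r \tfrac{1}{2}g_x'(s)^2\,\d s = 0$ for $r\leqslant 1/2$ and $\int_0^r\tfrac{1}{2}g_x'(s)^2\,\d s = 2x^2(r-\tfrac{1}{2})\leqslant x^2$ for $r\in[1/2,1]$.

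Second, I verify the constraint $\int_0^r\tfrac{1}{2}g_x'(s)^2\,\d s\leqslant \Lambda_r$ for all $r\in[0,1]$ on the event $\{\Lambda_{1/2}\geqslant x^2\}$. Since the Brownian meander is nonnegative, $\Lambda_r\geqslant 0$ for every $r$, which handles $r\in[0,1/2]$. For $r\in[1/2,1]$ the map $t\mapsto \Lambda_t = \inf\{W_s^+:s\in[t,1]\}$ is nondecreasing (the infimum is over a smaller set), so $\Lambda_r\geqslant \Lambda_{1/2}\geqslant x^2\geqslant \int_0^r\tfrac{1}{2}g_x'(s)^2\,\d s$. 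Hence $g_x$ is admissible in the supremum defining $A_\Lambda$, which yields $A_\Lambda\geqslant g_x(1)=x$ on $\{\Lambda_{1/2}\geqslant x^2\}$, giving the first inequality.

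Finally, for the strict positivity $\P(\Lambda_{1/2}\geqslant x^2)>0$, it suffices to note that the Brownian meander $\{W_t^+;t\in[0,1]\}$ is a.s.\ continuous and a.s.\ strictly positive on $(0,1]$; moreover its finite-dimensional distributions have full support on $(0,\infty)$, so $\inf_{s\in[1/2,1]}W_s^+$ is a positive random variable whose law charges every interval $[x^2,\infty)$. The only thing demanding a bit of care is this last positivity statement, which can be obtained either from the explicit joint density of the meander (e.g.\ via the Imhof--Durrett representation as a three-dimensional Bessel bridge) or by the standard lower bound $\P\bigl(\inf_{s\in[1/2,1]}W_s^+\geqslant y\bigr)\geqslant c\,\P(W_{1/2}^+\geqslant y+1)\cdot\P(\inf_{s\in[0,1/2]}(y+B_s)\geqslant y)$ obtained by Markov at time $1/2$, which is strictly positive for every $y>0$.
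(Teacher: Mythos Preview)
Your proof is correct and follows essentially the same approach as the paper: both construct the identical piecewise-linear test function $g_x$ (zero on $[0,1/2]$, slope $2x$ on $[1/2,1]$), verify the constraint on $\{\Lambda_{1/2}\geqslant x^2\}$, and appeal to properties of the Brownian meander for strict positivity. Your write-up is in fact slightly more detailed than the paper's, particularly in justifying the monotonicity of $\Lambda_r$ and the positivity of $\P(\Lambda_{1/2}\geqslant x^2)$.
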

	
	\begin{proof}[Proof of \Cref{Prop of AM}]
		Note that one the event $\{\Lambda_{\frac{1}{2}}\geqslant x^2\}$, construct function 
		\[g(r):=\begin{cases}
			0, \quad &r\in [0,\frac{1}{2}];
			\\
			2xr-x,&r\in [\frac{1}{2},1].
		\end{cases}\]
		By calculating, we know $\int_{0}^r \frac{1}{2}g^{\prime}(s)^2 \d s=\begin{cases}
			0,\quad &r\in [0,\frac{1}{2}];
			\\
			2x^2r-x^2,&r\in [\frac{1}{2},1].
		\end{cases}$, then $ \int_{0}^r \frac{1}{2}g^{\prime}(s)^2 \d s \leqslant \Lambda_r $ for any $r\in [0,1]$. Therefore, according to the definition of $A_\Lambda$, we know that $A_\Lambda \geqslant g(1)=x$.
		Then  \[ \P(A_\Lambda\geqslant x) \geqslant \P\left( \Lambda_{\frac{1}{2}} \geqslant x^2 \right).\]
		According to the property of Brownian meander, we know that $ \P\left(\Lambda_{\frac{1}{2}} \geqslant x^2\right)>0 $.
		Then \Cref{Prop of AM} is proved.
	\end{proof}
	
	According to \Cref{Prop of AM}, 
	we know  $\P\left(\sqrt{\sigma}A_\Lambda>1\right)>0$. For $C_1= K \P\left(\sqrt{\sigma}A_\Lambda>1\right) >0$, we have
	\begin{equation*}
		\liminf\limits_{x\rightarrow\infty}x^{\frac{2}{3}}\P(M>x)\geqslant C_1.
	\end{equation*}
	The left hand of \cref{tail estimate} is proved.
	
	\

	For the right hand of \cref{tail estimate}, 
	choose $\epsilon>0$,
	\begin{align}
		\P\left(M>n^{\frac{3}{4}} \right)
		&=
		\P\left(M>n^{\frac{3}{4}};Z_{\epsilon n}>0\right)
		+
		\P\left(M>n^{\frac{3}{4}};Z_{\epsilon n}=0\right)
		\nonumber\\
		&=
		\P\left(Z_{\epsilon n}>0\right)
		\P\left(M>n^{\frac{3}{4}}\left|\right.Z_{\epsilon n}>0\right)
		+
		\P\left(M>n^{\frac{3}{4}};Z_{\epsilon n}=0\right)\label{eq44}
	\end{align}
	
	Note that 
	\begin{align}
		\P\left(M>n^{\frac{3}{4}}\right| &\left.Z_{\epsilon n}>0\right)
		\leqslant
		\P\left(\sup\limits_{t\in [0,1]}\log\left(Z_{tT}+1\right)> \sigma\sqrt{n} \left|\right.Z_{\epsilon n}>0\right)
		\nonumber\\
		&+
		\P\left(T> \frac{n}{4} \left|\right.Z_{\epsilon n}>0\right)
		\nonumber\\
		&+
		\P\left(M>n^{\frac{3}{4}};
		\sup\limits_{t\in [0,1]}\log\left(Z_{tT}+1\right)> \sigma\sqrt{n};
		T\leqslant \frac{n}{4} \left|\right.Z_{\epsilon n}>0\right)
		\label{eq45}
	\end{align}
	Here $T:=\inf\left\{k:Z_k=0\right\}$ is the time of extinction.
	
	According to \cref{Extinction Prob Decay}, we know
	\begin{equation}\label{eq46}
		\lim\limits_{n\rightarrow\infty}
		\P\left(T>\frac{n}{4}|Z_{\epsilon n}>0\right)=
		\lim\limits_{n\rightarrow\infty}
		\dfrac{\P\left(T>\frac{n}{4}\right)}{\P\left(T>\epsilon n\right)} = 2\sqrt{\epsilon}
	\end{equation}
	
	The following theorem was proved by Afanansyev in \cite{Afanasyev1999},
	\begin{theorem}[Theorem 5 in \cite{Afanasyev1999}]\label{L2 of CBPRE}
		Under \Cref{A1},
		\begin{equation*}
			\mathcal{L}\left(\left\{ \dfrac{\log\left(Z_{tT}+1\right)}{\sigma\sqrt{n}}\big|T>n\right\}\right)
			\Longrightarrow
			\mathcal{L}\left(\left\{ \dfrac{W_0^+(t)}{\alpha};t\in [0,1] \right\}\right),\quad \text{as}\quad n\rightarrow\infty.
		\end{equation*}
		Here $W_0^+(t)$ is a normal Brownian excursion and $\alpha$ is a independent random variable uniformly distribution on $(0,1)$.
	\end{theorem}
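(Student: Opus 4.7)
The plan is to decompose the conditioning $\{T>n\}$ according to the exact extinction time $T=m$ and then combine two pieces: (i) the Kozlov-type asymptotic $\P(T>k)=\P(Z_k>0)\sim K/\sqrt{k}$ from \cref{Extinction Prob Decay}, and (ii) a Brownian-excursion limit theorem for critical BPRE conditioned on the exact extinction time $T=m$, namely $\log(Z_{tm}+1)/(\sigma\sqrt{m})\Rightarrow W_0^+(t)$ in $D([0,1])$.

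First I would identify the limiting law of the rescaled extinction time. For $x\geqslant 1$,
\[
\P\bigl(T/n>x\,\big|\,T>n\bigr)=\dfrac{\P(T>xn)}{\P(T>n)}\xrightarrow[n\to\infty]{}\dfrac{1}{\sqrt{x}}=\P\!\left(\dfrac{1}{\alpha^2}>x\right),
\]
where $\alpha$ is uniform on $(0,1)$. Hence, under $\P(\,\cdot\mid T>n)$, $T/n\Rightarrow 1/\alpha^2$ and so $\sqrt{T/n}\Rightarrow 1/\alpha$.

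Next I would establish the excursion limit for the conditioned population profile: conditional on $T=m$, the trajectory $\{\log(Z_{tm}+1)/(\sigma\sqrt{m});\,t\in[0,1]\}$ converges weakly in $D([0,1])$ to a standard Brownian excursion $W_0^+$. Following the scheme used for \cref{Reduced convergence}, this reduces via the Afanasyev--Geiger--Kersting--Vatutin machinery to a statement about the associated random walk $S$ conditioned to stay strictly positive on $\{1,\dots,m-1\}$ and to be near zero at time $m$; such a double conditioning of $S$ produces a Brownian excursion scaling limit, and the $\log Z$--to--$S$ coupling (as in \Cref{R_n convergence}, bounding $|\log Z_{\lfloor nt\rfloor}-L(\lfloor nt\rfloor,n)|$) transfers the limit from the associated walk to the population process. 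The continuity of the resulting excursion then gives the $D([0,1])$ convergence.

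Finally I would combine the two steps through the decomposition
\[
\dfrac{\log(Z_{tT}+1)}{\sigma\sqrt{n}}=\dfrac{\log(Z_{tT}+1)}{\sigma\sqrt{T}}\cdot\sqrt{T/n},
\]
and verify the joint convergence
\[
\left(\sqrt{T/n},\;\dfrac{\log(Z_{t T}+1)}{\sigma\sqrt{T}}\right)\;\Longrightarrow\;\left(\dfrac{1}{\alpha},\;W_0^+(t)\right),
\]
with $\alpha$ independent of $W_0^+$. Joint convergence holds because, once we condition on the exact value $T=m$, the distribution of the rescaled profile depends on $m$ only through the scaling, so different $m$'s become asymptotically decoupled from the shape $W_0^+$ after dividing by $\sqrt{m}$. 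Continuous mapping then yields the claimed limit $W_0^+(t)/\alpha$.

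The main obstacle is Step 2: pushing from the meander limit of \Cref{Environment Convergence to Brownian meander}, which uses the soft condition $\{Z_n>0\}$, to an excursion limit under the hard condition $\{T=m\}$ that jointly biases environment and trajectory in a delicate way. The likely route is a time-reversal/Doob $h$-transform argument extending the meander analysis of \cite{Afanasyev2005}, tuning the $h$-transform so that the conditioned associated walk exits at the prescribed time, and then transferring back to $\log Z$ via the Agresti bounds used in the proof of \Cref{Sn convergence}.
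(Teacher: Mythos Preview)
The paper does not supply its own proof of this statement. It is quoted verbatim as ``Theorem 5 in \cite{Afanasyev1999}'' and followed only by a remark that the original argument, written under stronger hypotheses, goes through under \Cref{A1} because the sole place stronger assumptions were used was to guarantee the meander limit of \Cref{Environment Convergence to Brownian meander}, which is now available from \cite{Afanasyev2005}. In other words, the paper's ``proof'' is: repeat \cite{Afanasyev1999} with the improved inputs of \cite{Afanasyev2005}. There is therefore nothing in the present paper to compare your argument against line by line.

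That said, your decomposition is the natural one and your Step~1 is correct: from $\P(T>n)\sim K/\sqrt{n}$ one immediately gets $\P(T/n>x\mid T>n)\to x^{-1/2}$, i.e.\ $T/n\Rightarrow 1/\alpha^2$ with $\alpha$ uniform on $(0,1)$, hence $\sqrt{T/n}\Rightarrow 1/\alpha$. Your Step~3 (continuous mapping plus asymptotic independence of scale and shape) is also the right way to finish once Step~2 is in hand.

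The real content, as you correctly flag, is Step~2: showing that conditional on the \emph{exact} extinction time $\{T=m\}$ the rescaled log-population converges to a Brownian excursion. Two cautions. First, $\{T=m\}$ is a joint constraint on the environment \emph{and} the genealogy, so it is not enough to condition only the associated walk $S$; the transfer ``from $S$ to $\log Z$'' via Agresti bounds, which works cleanly under the soft conditioning $\{Z_n>0\}$ as in \Cref{Sn convergence} and \Cref{R_n convergence}, does not obviously survive the hard pinning at time $m$ without additional input (in particular you need an a priori bound forcing $\log Z_{m-1}/\sqrt{m}\to 0$). Second, $\P(T=m)$ is of order $m^{-3/2}$, so this conditioning is more singular than $\{T>n\}$ and a direct $h$-transform on the walk alone will not produce it. Your sketch of ``time-reversal/Doob $h$-transform tuned to exit at the prescribed time'' is in the right spirit but is not yet a proof; you would need either the actual excursion-limit lemma from \cite{Afanasyev1999} (which is what the paper invokes) or a careful two-sided conditioning argument for the pair $(S,\log Z)$.
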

	
	\begin{remark}
		Compare with the Theorem 5 in \cite{Afanasyev1999}, the assumption in \Cref{L2 of CBPRE} is much weaker. At that time, the assumptions in \cite{Afanasyev1999} is just to ensure the \Cref{Branching Convergence to Brownian meander} was correct. As the same argument in \Cref{Remark 11}, we can repeat the proof in \cite{Afanasyev1999} to extend related results for \Cref{A1}.
	\end{remark}
	
	\
	
	Repeat the arguments of eqs. $(37)$ to $(39)$ of \cite{Afanasyev1999}, we have
	\begin{align}
		\lim\limits_{n\rightarrow\infty}
		\P\left(\sup\limits_{t\in [0,1]}\log\left(Z_{tT}+1\right)> \sqrt{n} \left|\right.Z_{\epsilon n}>0\right)
		&=
		\sqrt{\epsilon}\sigma\int_{0}^{\frac{1}{\sqrt{\epsilon}}}
		\P\left(\sup\limits_{t\in [0,1]} W^+_0(t)> x\right) \d x
		\nonumber\\
		&\leqslant \sqrt{\epsilon}\sigma\sqrt{\dfrac{\pi}{2}}\label{eq47}
	\end{align}
	
	Denote the $\mathcal{H}_\infty$ be the filtration generated by the environment and the branching structure (without movement), then
	\begin{align*}
		&\P\left(M>n^{\frac{3}{4}}, \sup\limits_{t\in [0,1]}\log_+Z_{tT} \leqslant \sqrt{n}; T\leqslant \frac{n}{4}, Z_{\epsilon n}>0\right)
		\nonumber\\
		\leqslant
		&\P\left(
		\bigcup\limits_{k=1}^{\abs{\frac{n}{4}}} \left\{\exists \nu\in N_k, V(\nu)> n^{\frac{3}{4}} \right\}; \sup\limits_{t\in [0,1]}\log_+Z_{tT} \leqslant \sqrt{n}; T\leqslant \frac{n}{4}, Z_{\epsilon n}>0
		\right)
		\nonumber\\
		=
		&\P\left(
		\P\left(
		\bigcup\limits_{k=1}^{\abs{\frac{n}{4}}} \left\{\exists \nu\in N_k, V(\nu)> n^{\frac{3}{4}} \right\} |\mathcal{H}_\infty
		\right);
		\sup\limits_{t\in [0,1]}\log_+Z_{tT} \leqslant \sqrt{n}; T\leqslant \frac{n}{4}, Z_{\epsilon n}>0
		\right)
		\nonumber\\
		\leqslant
		&\P\left(
		\exp\left(\sqrt{n}\right)
		\sum\limits_{k=1}^{\abs{\frac{n}{4}}}\P\left( B_k>n^{\frac{3}{4}} \right);
		\sup\limits_{t\in [0,1]}\log_+Z_{tT} \leqslant \sqrt{n}; T\leqslant \frac{n}{4}, Z_{\epsilon n}>0
		\right)
		\nonumber\\
		\leqslant
		&\P\left(
		\exp\left(\sqrt{n}\right) \P\left( \exists 1\leqslant k\leqslant {\abs{\frac{n}{4}}}; B_k>n^{\frac{3}{4}} \right);\sup\limits_{t\in [0,1]}\log_+Z_{tT} \leqslant \sqrt{n}; T\leqslant \frac{n}{4}, Z_{\epsilon n}>0
		\right)
	\end{align*}
	Here $\left\{B_0:=0,B_1,\cdots,B_{{\abs{\frac{n}{4}}}}\right\}$ is a random walk with one step distribution $\ND(0,1)$, so using Brownian motion to embedding, and according to reflection principle, we know
	\[
	\P\left( \exists 1\leqslant k\leqslant {\abs{\frac{n}{4}}}; B_k>n^{\frac{3}{4}} \right)
	\leqslant
	2\P\left( B_1>2n^{\frac{1}{4}} \right)
	\leqslant
	n^{-\frac{1}{4}}\exp\left(-2\sqrt{n}\right).
	\]
	Together with \cref{Extinction Prob Decay}, we know that
	\begin{align}
		&\lim\limits_{n\rightarrow\infty}
		\P\left(M>n^{\frac{3}{4}}, \sup\limits_{t\in [0,1]}\log_+Z_{tT} \leqslant \sqrt{n}; T\leqslant \frac{n}{4}|Z_{\epsilon n}>0\right)
		\nonumber\\
		=&\dfrac{1}{K}\lim\limits_{n\rightarrow\infty}
		\sqrt{n\epsilon}\P\left(M>n^{\frac{3}{4}}, \sup\limits_{t\in [0,1]}\log_+Z_{tT} \leqslant \sqrt{n}; T\leqslant \frac{n}{4}; Z_{\epsilon n}>0\right)
		\nonumber\\
		\leqslant
		&\dfrac{1}{K}\lim\limits_{n\rightarrow\infty}
		\sqrt{\epsilon}
		n^{\frac{1}{4}}\exp\left(-\sqrt{n}\right)=0.\label{eq48}
	\end{align}
	Combine \cref{eq45,eq46,eq47,eq48}, we have
	\begin{align*}
		\limsup\limits_{n\rightarrow\infty}
		\P\left(M>n^{\frac{3}{4}}\left|\right.Z_{\epsilon n}>0\right)
		&\leqslant
		\epsilon\sigma\sqrt{\frac{\pi}{2}}+2\sqrt{\epsilon}.
	\end{align*}
	Therefore, combine \cref{Extinction Prob Decay},
	\begin{align}
		\lim\limits_{n\rightarrow\infty}
		\sqrt{n}\P\left(M>n^{\frac{3}{4}};Z_{\epsilon n}>0\right)
		&=
		\lim\limits_{n\rightarrow\infty}\sqrt{n}
		\P\left(Z_{\epsilon n}>0 \right)
		\P\left(M>n^{\frac{3}{4}}|Z_{\epsilon n}>0\right)
		\nonumber\\
		&\leqslant
		K\left(\sigma\sqrt{\dfrac{\pi}{2}}+2\right).\label{eq49}
	\end{align}
	For another item in \cref{eq44}, we know
	\begin{align}
		\P\left(M>n^{\frac{3}{4}};Z_{\epsilon n}=0\right)
		&\leqslant
		\P\left(M>n^{\frac{3}{4}};Z_{\epsilon n}=0; \sup_{1\leqslant k\leqslant n\epsilon} Z_k \leqslant \exp\left(\sigma\sqrt{n}\right)\right)
		\nonumber\\
		&+
		\P\left(Z_{\epsilon n}=0; \sup_{1\leqslant k\leqslant n\epsilon} Z_k > \exp\left(\sigma\sqrt{n}\right)\right)\label{eq50}
	\end{align}
	Using the filtration $\mathcal{H}_\infty$, with the same argument, we can obtain
	\begin{align}
		&\P\left(M>n^{\frac{3}{4}};Z_{\epsilon n}=0; \sup_{1\leqslant k\leqslant \abs{n\epsilon}} Z_n \leqslant \exp\left(\sigma\sqrt{n}\right)\right)
		\nonumber\\
		=
		&\P\left(
		\bigcup\limits_{k=1}^{\epsilon n} \left\{\exists \nu\in N_k, V(\nu)> n^{\frac{3}{4}} \right\};
		Z_{\epsilon n}=0; \sup_{1\leqslant k\leqslant \abs{n\epsilon}} Z_n \leqslant \exp\left(\sigma\sqrt{n}\right)\right)
		\nonumber\\
		\leqslant
		&\P\left(
		\exp\left(\sigma\sqrt{n}\right)\P\left(\exists 1\leqslant k\leqslant \epsilon n;B_k>n^{\frac{3}{4}} \right)
		;Z_{\epsilon n}=0; \sup_{1\leqslant k\leqslant \abs{n\epsilon}} Z_n \leqslant \exp\left(\sigma\sqrt{n}\right)\right)
		\nonumber\\
		\leqslant
		& \exp\left(\sigma\sqrt{n}\right) 2\sqrt{\epsilon}n^{-\frac{1}{4}}\exp\left(-\frac{\sqrt{n}}{2\epsilon}\right)
		\P\left(Z_{\epsilon n}=0; \sup_{1\leqslant k\leqslant \abs{n\epsilon}} Z_n \leqslant \exp\left(\sigma\sqrt{n}\right)\right)
		\nonumber\\
		\leqslant
		&\exp\left(\sigma\sqrt{n}\right) 2\sqrt{\epsilon}n^{-\frac{1}{4}}\exp\left(-\frac{\sqrt{n}}{2\epsilon}\right).\nonumber
	\end{align}
	Thus, for $\epsilon$ small enough, we know
	\begin{equation}\label{eq51}
		\lim\limits_{n\rightarrow\infty}\sqrt{n}\P\left(M>n^{\frac{3}{4}};Z_{\epsilon n}=0; \sup_{1\leqslant k\leqslant \abs{n\epsilon}} Z_n \leqslant \exp\left(\sigma\sqrt{n}\right)\right) = 0.
	\end{equation}
	About another item of \cref{eq50}, 
	define $ \rho:= \inf\{k:Z_k>\exp(\sigma\sqrt{n})\} $
	\begin{align}
		&\P\left(Z_{\epsilon n}=0; \sup_{1\leqslant k\leqslant n\epsilon} Z_n > \exp\left(\sigma\sqrt{n}\right)\right)
		\nonumber\\
		=&
		\sum_{k=1}^{\abs{n\epsilon}}
		\sum_{y=\abs{\exp\left(\sigma\sqrt{n}\right)}+1}^{\infty}
		\P\left(\rho=k,Z_k=y,Z_{n\epsilon}=0\right)
		\nonumber\\
		\leqslant&
		\sum_{k=1}^{\abs{n\epsilon}}
		\sum_{y=\abs{\exp\left(\sigma\sqrt{n}\right)}+1}^{\infty}
		\P\left(\rho=k,Z_k=y\right)
		\E\left(  \P_\xi\left(Z_{n\epsilon} = 0 \right)^k \right)
		\nonumber\\
		\leqslant&
		\P\left( \sup_{k\geqslant1} Z_n > \exp\left(\sigma\sqrt{n}  \right)\right)
		\E\left(
		\P_\xi\left(Z_{n\epsilon} = 0 \right)^{\abs{\exp(\sigma\sqrt{n})}}
		\right)\label{eq52}
	\end{align}
	
	\begin{lemma}\label{lemme of some extra}
		Under \Cref{A1}, for any $\epsilon>0$, and $x<0$,
		\begin{align*}
			\lim\limits_{n\rightarrow\infty}\P
			\left( \dfrac{1}{\sigma\sqrt{n}}\log\left(1-F_{0,\abs{n\epsilon}}
			\left(0\right)\right) <x \right)
			=
			\P\left(
			\inf\limits_{t\in[0,\epsilon]}B(t)<x \right).
		\end{align*}
		Here $\{B_t;t\geqslant 0\}$ is a standard Brownian motion.
	\end{lemma}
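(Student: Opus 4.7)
The plan is to sandwich $\log P_\xi(0,\abs{n\epsilon})=\log(1-F_{0,\abs{n\epsilon}}(0))$ between $L(0,\abs{n\epsilon})$ and $L(0,\abs{n\epsilon})-\log(1+\sum_{j=1}^{\abs{n\epsilon}}\eta_j)$, combine Donsker's invariance principle for the associated random walk with a vanishing-error estimate for the second term, and conclude by a Slutsky-type argument. Specifically, applying the bounds \cref{Prob Upper,Prob lower} already derived in the proof of \Cref{Sn convergence} with $k=0$ and $S_0=0$ gives
\begin{equation*}
L(0,\abs{n\epsilon})-\log\left(1+\sum_{j=1}^{\abs{n\epsilon}}\eta_j\right)\leqslant \log P_\xi(0,\abs{n\epsilon})\leqslant L(0,\abs{n\epsilon}).
\end{equation*}

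First I would check that the error term is negligible, namely that unconditionally $\frac{1}{\sigma\sqrt{n}}\log(1+\sum_{j=1}^{\abs{n\epsilon}}\eta_j)\to 0$ in $\P$-probability. Using $\eta_k\leqslant \varkappa(F_k,a)+a\exp(-X_k)$ as in the proof of \Cref{lemma of condition}, the sum is dominated by $n\max_{k\leqslant \abs{n\epsilon}}\varkappa(F_k,a)+an\max_{k\leqslant \abs{n\epsilon}}\exp(-X_k)$. The $(2+\delta)$-moment bound on $\log_+\varkappa(F,a)$ from \Cref{A1}, combined with a union bound, gives $\max_{k\leqslant \abs{n\epsilon}}\log_+\varkappa(F_k,a)=o(\sqrt{n})$ in probability, and $\E[X^2]<\infty$ similarly gives $\max_{k}|X_k|=o(\sqrt{n})$ in probability. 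This unconditional version is strictly easier than \Cref{lemma of condition} itself, since no $h$-transform arguments are needed.

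Second, since $\{X_k\}$ is an i.i.d.\ sequence with mean zero and variance $\sigma^2$ by \Cref{A1}, Donsker's invariance principle gives
\begin{equation*}
\left\{\frac{S_{\abs{nt}}}{\sigma\sqrt{n}}\right\}_{t\in[0,\epsilon]}\Longrightarrow \{B(t)\}_{t\in[0,\epsilon]}
\end{equation*}
in $D([0,\epsilon])$. Because Brownian paths are continuous almost surely and the infimum functional $g\mapsto \inf_{t\in[0,\epsilon]}g(t)$ is continuous at continuous paths, the continuous mapping theorem yields $\frac{L(0,\abs{n\epsilon})}{\sigma\sqrt{n}}\Longrightarrow \inf_{t\in[0,\epsilon]}B(t)$.

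Combining the two steps through the sandwich above (Slutsky) produces
\begin{equation*}
\frac{\log P_\xi(0,\abs{n\epsilon})}{\sigma\sqrt{n}}\Longrightarrow \inf_{t\in[0,\epsilon]}B(t).
\end{equation*}
The distribution of $\inf_{t\in[0,\epsilon]}B(t)$ is absolutely continuous on $(-\infty,0]$ by the reflection principle, so every $x<0$ is a continuity point of its distribution function, and convergence in distribution at $x$ yields exactly the claimed convergence of $\P(\frac{1}{\sigma\sqrt{n}}\log(1-F_{0,\abs{n\epsilon}}(0))<x)$ to $\P(\inf_{t\in[0,\epsilon]}B(t)<x)$. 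There is no serious obstacle here; the only mild subtlety is verifying that the error term in Step 1 really decays at speed $\sqrt{n}$ (the same speed as the Donsker limit), so that the two estimates can be combined.
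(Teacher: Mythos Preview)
Your proposal is correct and follows essentially the same route as the paper: the paper's proof is a two-line sketch that invokes the sandwich bounds \cref{Prob Upper,Prob lower} together with an unconditional moment estimate for $\log(1+\sum_k\eta_k)$, and you have simply filled in the details (the union-bound control of $\max_k\log_+\varkappa_k$ and $\max_k|X_k|$, the Donsker step, and the Slutsky/continuous-mapping conclusion). Nothing in your write-up deviates from the paper's intended argument.
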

	
	\begin{proof}[Proof of \Cref{lemme of some extra}]
		For any $\epsilon>0$, under \Cref{A1}, by the moment estimate, we have
		\begin{equation*}
			\lim\limits_{n\rightarrow\infty}\P\left(
			\log\left(1+\sum_{k=1}^n\tilde{F}_k\right)>\exp\left(\sigma\epsilon\sqrt{n}\right) \right)
			=0
		\end{equation*}
		Together with \cref{Prob Upper,Prob lower}, the proof is over.
	\end{proof}
	
	\
	
	For any $x\in(0,1)$ and $y>0$, 
	the inequality $(1-x)^y\leqslant \exp(-xy)$ holds, then
	\begin{align*}
		P_{\xi}\left(Z_{n\epsilon}=0
		\right)^{\exp\left(\sigma\sqrt{n}\right)}
		&\leqslant
		\left(1-P_\xi\left(Z_{n\epsilon}>0\right)\right)^{\exp\left(\sigma\sqrt{n}\right)}
		\ind{P_\xi\left(Z_{n\epsilon}>0\right)\geqslant \exp\left(-\frac{\sigma\sqrt{n}}{2}\right)}
		\nonumber\\
		&+
		\ind{P_\xi\left(Z_{n\epsilon}>0\right)<
			\exp\left(-\frac{\sigma\sqrt{n}}{2}\right)}.
		\nonumber\\
		&\leqslant
		\exp\left(-\exp\left(\dfrac{\sigma\sqrt{n}}{2}\right)\right)
		\ind{P_\xi\left(Z_{n\epsilon}>0\right)\geqslant \exp\left(-\frac{\sigma\sqrt{n}}{2}\right)}
		\nonumber\\
		&+
		\ind{P_\xi\left(Z_{n\epsilon}>0\right)< \exp\left(-\frac{\sigma\sqrt{n}}{2}\right)}
	\end{align*}
	Take exception in both side, then
	\begin{align*}
		\E\left(P_{\xi}\left(Z_{n\epsilon}=0
		\right)^{\exp\left(\sigma\sqrt{n}\right)}\right)
		&\leqslant
		\exp\left(-\exp\left(\dfrac{\sigma\sqrt{n}}{2}\right)\right)
		\P
		\left( \dfrac{1}{\sigma\sqrt{n}}\log\left(1-F_{0,n\epsilon}
		\left(0\right)\right) \geqslant -\dfrac{1}{2} \right)
		\nonumber\\
		&+
		\P
		\left( \dfrac{1}{\sigma\sqrt{n}}\log\left(1-F_{0,n\epsilon}
		\left(0\right)\right) < -\dfrac{1}{2} \right)
	\end{align*}
	Let $n\rightarrow\infty$, then
	\begin{align*}
		\limsup\limits_{n\rightarrow\infty}
		\E\left(P_{\xi}\left(Z_{n\epsilon}=0
		\right)^{\exp\left(\sigma\sqrt{n}\right)}\right)
		&\leqslant
		\P\left(
		\inf\limits_{t\in[0,\epsilon]}B(t)<-\dfrac{1}{2} \right).
	\end{align*}
	Next take $\epsilon\rightarrow0$, and note that
	\begin{equation*}
		\lim\limits_{\epsilon\rightarrow0}\inf\limits_{t\in[0,\epsilon]}B(t)
		=0,\quad \P~\text{almost surely.}
	\end{equation*}
	So, we know
	\begin{align}\label{eq53}
		\lim\limits_{\epsilon\rightarrow0}
		\limsup\limits_{n\rightarrow\infty}
		\E\left(P_{\xi}\left(Z_{n\epsilon}=0
		\right)^{\exp\left(\sigma\sqrt{n}\right)}\right)
		=0.
	\end{align}
	
	What's more, use \cref{eq53} and repeat the proof in \cite{Afanasyev1999} (the Lemma 4 is replaced by \cref{eq53}, and other arguments are consistent), we can prove that there exist some finite positive constant $K_2$, such that
	\begin{equation}\label{eq54}
		\lim\limits_{n\rightarrow\infty}
		\sqrt{n}\P\left( \sup_{k\geqslant1} Z_n > \exp\left(\sigma\sqrt{n}  \right)\right) = K_2.
	\end{equation}
	Combine \cref{eq52,eq53,eq54}, we know
	\begin{equation}\label{eq55}
		\lim\limits_{\epsilon\rightarrow0}\limsup\limits_{n\rightarrow\infty}\sqrt{n}\P\left(Z_{\epsilon n}=0; \sup_{1\leqslant k\leqslant n\epsilon} Z_n > \exp\left(\sigma\sqrt{n}\right)\right) = 0.
	\end{equation}
	Therefore, together with \cref{eq50,eq51,eq55}, we have
	\begin{align}\label{eq56}
		\lim\limits_{\epsilon\rightarrow0}\limsup\limits_{n\rightarrow\infty}
		\sqrt{n}
		\P\left(M>n^{\frac{3}{4}};Z_{\epsilon n}=0\right)=0.
	\end{align}
	Combine \cref{eq44,eq49,eq56}, just take $C_2:= K\left(\sigma\sqrt{\dfrac{\pi}{2}}+2\right) $, thus
	\begin{align}
		\limsup\limits_{n\rightarrow\infty}\sqrt{n}\P\left(M>n^{\frac{3}{4}} \right)
		\leqslant C_2.
	\end{align}
	Hence, the right hand of \cref{tail estimate} is proved.
	
	Now, the proof of \Cref{Tail prob of M} is finished.

	\bibliography{sn-bibliography}

\begin{thebibliography}{10}

\bibitem{Addario2009}
Louigi Addario-Berry and Bruce Reed.
\newblock Minima in branching random walks.
\newblock {\em Ann. Probab.}, 37(3):1044--1079, 2009.

\bibitem{Afanasyev1993}
V.~I. Afanasyev.
\newblock A limit theorem for a critical branching process in a random
  environment.
\newblock {\em Diskret. Mat.}, 5(1):45--58, 1993.

\bibitem{Afanasyev1997}
V.~I. Afanasyev.
\newblock A new limit theorem for a critical branching process in a random
  environment.
\newblock {\em Diskret. Mat.}, 9(3):52--67, 1997.

\bibitem{Afanasyev1999}
V.~I. Afanasyev.
\newblock On the maximum of a critical branching process in a random
  environment.
\newblock {\em Diskret. Mat.}, 11(2):86--102, 1999.

\bibitem{Afanasyev2005}
V.~I. Afanasyev, J.~Geiger, G.~Kersting, and V.~A. Vatutin.
\newblock Criticality for branching processes in random environment.
\newblock {\em Ann. Probab.}, 33(2):645--673, 2005.

\bibitem{Aidekon2013}
Elie A\"{\i}d\'{e}kon.
\newblock Convergence in law of the minimum of a branching random walk.
\newblock {\em Ann. Probab.}, 41(3A):1362--1426, 2013.

\bibitem{Bachmann}
Markus Bachmann.
\newblock Limit theorems for the minimal position in a branching random walk
  with independent logconcave displacements.
\newblock {\em Adv. in Appl. Probab.}, 32(1):159--176, 2000.

\bibitem{Biggins1976}
J.~D. Biggins.
\newblock The first- and last-birth problems for a multitype age-dependent
  branching process.
\newblock {\em Advances in Appl. Probability}, 8(3):446--459, 1976.

\bibitem{Biggins2004}
J.~D. Biggins and A.~E. Kyprianou.
\newblock Measure change in multitype branching.
\newblock {\em Adv. in Appl. Probab.}, 36(2):544--581, 2004.

\bibitem{Borovkov1997}
K.~A. Borovkov and V.~A. Vatutin.
\newblock Reduced critical branching processes in random environment.
\newblock {\em Stochastic Process. Appl.}, 71(2):225--240, 1997.

\bibitem{Bramson2016}
Maury Bramson, Jian Ding, and Ofer Zeitouni.
\newblock Convergence in law of the maximum of nonlattice branching random
  walk.
\newblock {\em Ann. Inst. Henri Poincar\'{e} Probab. Stat.}, 52(4):1897--1924,
  2016.

\bibitem{Bramson2009}
Maury Bramson and Ofer Zeitouni.
\newblock Tightness for a family of recursion equations.
\newblock {\em Ann. Probab.}, 37(2):615--653, 2009.

\bibitem{Bramson}
Maury~D. Bramson.
\newblock Minimal displacement of branching random walk.
\newblock {\em Z. Wahrsch. Verw. Gebiete}, 45(2):89--108, 1978.

\bibitem{LDP1998}
Amir Dembo and Ofer Zeitouni.
\newblock {\em Large deviations techniques and applications}, volume~38 of {\em
  Applications of Mathematics (New York)}.
\newblock Springer-Verlag, New York, second edition, 1998.

\bibitem{Fang2012}
Ming Fang and Ofer Zeitouni.
\newblock Branching random walks in time inhomogeneous environments.
\newblock {\em Electron. J. Probab.}, 17:no. 67, 18, 2012.

\bibitem{Fang2012(2)}
Ming Fang and Ofer Zeitouni.
\newblock Slowdown for time inhomogeneous branching {B}rownian motion.
\newblock {\em J. Stat. Phys.}, 149(1):1--9, 2012.

\bibitem{Fleischmann1977}
Klaus Fleischmann and Rainer Siegmund-Schultze.
\newblock The structure of reduced critical {G}alton-{W}atson processes.
\newblock {\em Math. Nachr.}, 79:233--241, 1977.

\bibitem{Geiger2000}
J.~Geiger and G.~Kersting.
\newblock The survival probability of a critical branching process in random
  environment.
\newblock {\em Teor. Veroyatnost. i Primenen.}, 45(3):607--615, 2000.

\bibitem{Hammersley}
J.~M. Hammersley.
\newblock Postulates for subadditive processes.
\newblock {\em Ann. Probability}, 2:652--680, 1974.

\bibitem{Harris2006}
Robert Hardy and Simon~C. Harris.
\newblock A conceptual approach to a path result for branching {B}rownian
  motion.
\newblock {\em Stochastic Process. Appl.}, 116(12):1992--2013, 2006.

\bibitem{Hu2009}
Yueyun Hu and Zhan Shi.
\newblock Minimal position and critical martingale convergence in branching
  random walks, and directed polymers on disordered trees.
\newblock {\em Ann. Probab.}, 37(2):742--789, 2009.

\bibitem{Huang2014}
Chunmao Huang and Quansheng Liu.
\newblock Branching random walk with a random environment in time, 2014.

\bibitem{Kesten1995}
Harry Kesten.
\newblock Branching random walk with a critical branching part.
\newblock {\em J. Theoret. Probab.}, 8(4):921--962, 1995.

\bibitem{Kingman}
J.~F.~C. Kingman.
\newblock The first birth problem for an age-dependent branching process.
\newblock {\em Ann. Probability}, 3(5):790--801, 1975.

\bibitem{Kozlov1976}
M.~V. Kozlov.
\newblock The asymptotic behavior of the probability of non-extinction of
  critical branching processes in a random environment.
\newblock {\em Teor. Verojatnost. i Primenen.}, 21(4):813--825, 1976.

\bibitem{Lalley1987Condition}
S.~P. Lalley and T.~Sellke.
\newblock A conditional limit theorem for the frontier of a branching
  {B}rownian motion.
\newblock {\em Ann. Probab.}, 15(3):1052--1061, 1987.

\bibitem{Lalley2015}
Steven~P. Lalley and Yuan Shao.
\newblock On the maximal displacement of critical branching random walk.
\newblock {\em Probab. Theory Related Fields}, 162(1-2):71--96, 2015.

\bibitem{Mallard2016}
Pascal Maillard and Ofer Zeitouni.
\newblock Slowdown in branching {B}rownian motion with inhomogeneous variance.
\newblock {\em Ann. Inst. Henri Poincar\'e{} Probab. Stat.}, 52(3):1144--1160,
  2016.

\bibitem{Mallein2015(2)}
Bastien Mallein.
\newblock Maximal displacement in a branching random walk through interfaces.
\newblock {\em Electron. J. Probab.}, 20:no. 68, 40, 2015.

\bibitem{Mallein2015}
Bastien Mallein.
\newblock Maximal displacement of a branching random walk in time-inhomogeneous
  environment.
\newblock {\em Stochastic Process. Appl.}, 125(10):3958--4019, 2015.

\bibitem{Mallein2019}
Bastien Mallein and Piotr Mi\l~o\'s.
\newblock Maximal displacement of a supercritical branching random walk in a
  time-inhomogeneous random environment.
\newblock {\em Stochastic Process. Appl.}, 129(9):3239--3260, 2019.

\bibitem{Fleischman1979Maximun}
Stanley Sawyer and Joseph Fleischman.
\newblock Maximum geographic range of a mutant allele considered as a subtype
  of a brownian branching random field.
\newblock {\em PNAS}, 76(2):872--875, 1979.

\bibitem{Vatutin2002}
V.~A. Vatutin.
\newblock Reduced branching processes in a random environment: the critical
  case.
\newblock {\em Teor. Veroyatnost. i Primenen.}, 47(1):21--38, 2002.

\end{thebibliography}
	\bibliographystyle{plain}
	
\end{document}